\date{ }
\numberwithin{equation}{section}
\numberwithin{figure}{section}
\numberwithin{table}{section}
\theoremstyle{plain}
\newtheorem{thm}{Theorem}[section]
\theoremstyle{remark}
\newtheorem{rem}{Remark}[section]
\newcommand{\bu}{{\mathbf u}}
\newcommand{\be}{\begin{equation}}
\newcommand{\ee}{\end{equation}}
\newcommand{\bse}{\begin{subequations}}
\newcommand{\ese}{\end{subequations}}
\def\bh{\mathbf{h}}
\def\bu{\mathbf{u}}
\def\bW{\mathbf{W}}
\def\bn{\mathbf{n}}
\def\be{\mathbf{e}}
\def\bx{\mathbf{x}}
\def\ba{\mathbf{a}}
\def\bI{\mathbf{I}}
\def\bD{\mathbf{D}}
\def\half{\frac{1}{2}}
\def\bp{\mathbf{p}}
\def\bG{\mathbf{G}}
\def\bu{\mathbf{u}}
\def\bv{\mathbf{v}}
\def\bw{\mathbf{w}}
\def\bH{\mathbf{H}}
\def\cL{\mathcal{L}}
\def\cG{\mathcal{G}}
\def\cB{\mathcal{B}}
\def\cA{\mathcal{A}}
\def\cN{\mathcal{N}}
\def\cE{\mathcal{E}}
\def\half{\frac{1}{2}}
\newcommand{\ben}{\begin{eqnarray}}
\newcommand{\een}{\end{eqnarray}}
\newcommand{\beq}{\begin{equation}}
\newcommand{\eeq}{\end{equation}}
\newcommand{\bea}{\begin{array}}
\newcommand{\eea}{\end{array}}
\newcommand{\bef}{\begin{figure}}
\newcommand{\eef}{\end{figure}}
\newtheorem{scheme}{Scheme}[section]
\begin{document}
\title{A General Framework to Derive Linear, Decoupled and Energy-stable Schemes for Reversible-Irreversible Thermodynamically Consistent Models: Part I Incompressible Hydrodynamic Models}
\author[J. Zhao]{
Jia Zhao\affil{1}\comma \corrauth}
\address{\affilnum{1}\ Department of Mathematics \& Statistics, Utah State University, Logan, UT, USA }
\email{ {\tt jia.zhao@usu.edu.} (J.~Zhao)}

\begin{abstract}
In this paper, we present a general numerical platform for designing accurate, efficient, and stable numerical algorithms for incompressible hydrodynamic models that obeys the thermodynamical laws. The obtained numerical schemes are automatically linear in time. It decouples the hydrodynamic variable and other state variables such that only small-size linear problems need to be solved at each time marching step. Furthermore, if the classical velocity projection method is utilized, the velocity field and pressure field can be decoupled. In the end, only a few elliptic-type equations shall be solved in each time step. This strategy is made possible through a sequence of model reformulations by fully exploring the models' thermodynamic structures. The generalized Onsager principle directly guides these reformulation procedures. In the reformulated but equivalent models, the reversible and irreversible components can be identified, guiding the numerical platform to decouple the reversible and irreversible dynamics. This eventually leads to decoupled numerical algorithms, given that the coupling terms only involve irreversible dynamics. To further demonstrate the numerical platform's power, we apply it to several specific incompressible hydrodynamic models. The energy stability of the proposed numerical schemes is shown in detail. The second-order accuracy in time is verified numerically through time step refinement tests. Several benchmark numerical examples are presented to further illustrate the proposed numerical framework's accuracy, stability, and efficiency.
\end{abstract}

\ams{}
\keywords{Phase Field; Decoupled Scheme; Energy Stable; Cahn-Hilliard-Navier-Stokes; Hydrodynamics; Liquid Crystal}
\maketitle

\section{Introduction}

Non-equilibrium phenomena are ubiquitous, which require well-developed models to describe their time-dependent dynamics. In contrast to the classical thermodynamic theories for equilibrium systems, paradigms for developing theories for non-equilibrium phenomena have not gained widespread recognition. There have yet been any universally accepted physical laws, analogous to the three fundamental laws in equilibrium thermodynamics,  discovered nor formulated for non-equilibrium thermodynamics. Nevertheless, a plethora of theoretical frameworks has been developed and used to derive new theories or validate the existing theories for non-equilibrium phenomena, which are consistent with the classical thermodynamic theory at equilibrium. 

In the search for a systematic approach rooted in a solid mathematical foundation, two new formalisms that emerged in the last century culminated in a series of seminal papers and a monograph by Beris and Edwards on the Poisson bracket formalism \cite{Beris&EdwardsJR1990A, Beris&EdwardsJR1990B, BerisEdwards1994}, and by Ottinger and Grmela on the GENERIC formalism \cite{OttingerPRE1997A, OttingerPRE1997B, GENERIC-2DNS}, where GENERIC is an acronym for "General Equation for Non-Equilibrium Reversible-Irreversible Coupling." Both approaches established mathematical equations and physical structures for the non-equilibrium models to follow. On another front, Onsager pioneered his linear response theory and reciprocal relation for dissipative thermodynamic systems and developed the variational method using the Onsager-Machlup potential \cite{OnsagerL1, OnsagerL2}.  This method has recently been amplified by the energetic variational approach to non-equilibrium models \cite{LiuChunEV1,LiuChunEV2,LiuChunEV3} and the generalized Onsager principle for more general situations \cite{OnsagerL1,OnsagerL2,Yang&Li&Forest&Wang2016,JoannyNJP2007}.
The matrix formulation applied to viscoelastic fluid models is a simplified version of the generalized Onsager principle \cite{JongschaapJR1994, GENERIC-Matrix-relation}.

All the formulations mentioned above produce thermodynamically consistent models. Given the broader range of applicability and simplicity, we adopt the Onsager formalism for deriving non-equilibrium models in this paper. In formulating non-equilibrium thermodynamic models, the Onsager formalism provides a clear mathematical description for the reversible and irreversible process involved through the mobility operator and the free energy in the isothermal case or the entropy in the non-isothermal case.   The energy (or entropy) and mobility pair delineate the coupling among all thermodynamic variables and dictates that the total entropy production rate is nonnegative. This setting is especially suitable for developing structure-preserving numerical approximations for such non-equilibrium models. 

For a given thermodynamically-consistent PDE system describing non-equilibrium phenomena, a high order, accurate, computationally efficient, and property and structure-preserving discretization are always desirable. In particular, for thermodynamically consistent models, a measure for good or better numerical approximations should always be if the discrete scheme would preserve the physical laws and as much as possible the physical properties at the discrete level. A numerical scheme preserving the continuous model's original mathematical structures and physical properties is called a structure-preserving or geometric-preserving scheme. The latter normally refers to the numerical scheme for a Hamiltonian system. Structure-preserving schemes have had enormous success in solving conservative dynamical systems, most notably the Hamiltonian systems, during the past decades \cite{HairerBook, Structure-Preserving-1, Structure-Preserving-2}. Such structure-preserving approximation not only has a tremendous theoretical value but also has a practical implication. For instance, in one of our early studies \cite{Wang-JCP2011}, we showed that an energy-dissipation-preserving scheme could resolve more details in the flow structure than a non-preserving scheme at the same level of numerical resolution. So, structure-preservation would be one of the attributes that we would like to attain in developing numerical approximations to the thermodynamically consistent models.

Despite the success of Hamiltonian dynamical systems, structure-preserving numerical approximations have not been well developed for thermodynamic and hydrodynamic systems.  This is partly because of the increased complexity in the thermodynamic and hydrodynamic models. More pertinent is perhaps because of the lack of understanding of the mathematical structure of the non-equilibrium models.
Another issue challenging the computational science community is how to deal with nonlinearity in numerical algorithms.
Most of the available structure-preserving schemes in the literature today \cite{thermodynamic-consistent-scheme1,RomeroCMAME2010A,RomeroCMAME2010B,RomeroNME2009,GENERIC-2DNS} are nonlinear and hard to implement. Issues on the solvability of the discrete system, the uniqueness of the discrete solution, error estimates, and time step constraint are difficult to address.  So, a linear scheme would be desirable since it enables a rigorous proof of solution existence and uniqueness using, for example, the Babuska-Lax-Milgram theorem for the discrete system resulted from the linear scheme. Another property of the non-equilibrium model that the numerical scheme should respect is total energy conservation and positive entropy production. The latter corresponds to energy dissipation in the system. Currently, an energy dissipation rate preserving numerical scheme independent of the time-step size is called energy stable in the literature unconditionally. Here, we name the structure-preserving scheme unconditional energy stable if it respects the total energy conservation and positive entropy production regardless of the time-step size.

Inspired by many seminal works \cite{Li&ShenDecoupled2020, YangDecoupled2021,YangDecoupled-1,YangDecoupled-2,Wang&Wang&WiseDCDS2010,Shen&Yang2014SISC, Zhao&Yang&Shen&WangJCP2016,Zhao2018EQreview} ,in this paper, we propose a general framework for designing structure-preserving numerical schemes for thermodynamically consistent models in non-equilibrium dynamics. The resulted numerical schemes from our general framework are linear, high-order-in-time, structure-preserving, energy stable, and easy-to-implement. We particularly focus on the generic formulation of the numerical framework and its applications in the incompressible hydrodynamic models in this paper. The applications of our general numerical platform on other thermodynamically consistent models, such as non-isothermal thermodynamical and hydrodynamical models for complex fluid flows, will be elaborated consequently in our late papers.  Meanwhile, we emphasize that one advantage of our general numerical framework is its systematical formulation. Under its guidance, a computational toolkit with modular code constructs to simplify the numerical implementation of solving the thermodynamically consistent models. 

The rest of this paper is organized as follows. In Section \ref{sec:GOP}, we provide a generic formulation of thermodynamically consistent models using the generalized Onsager principle. Some examples of casting incompressible hydrodynamic models into the generalized form are discussed. In Section \ref{sec:generic-numerical-method}, the generic model is then transformed into an equivalent form by first applying the energy quadratization (EQ) method and then the reversible-irreversible decoupling (RID) method. Some generic numerical schemes for solving the general model are provided. Afterward, in Section \ref{sec:applications}, we apply the EQ-RID method to some widely-used incompressible hydrodynamic models. Specific numerical schemes for these models are elaborated. In Section \ref{sec:results}, we benchmark the proposed framework with numerical examples. In the end, we draw a brief conclusion.

\section{Thermodynamically consistent reversible-irreversible PDE Models based on the generalized Onsager principle} \label{sec:GOP}

\subsection{Generalized Onsager principle}
The generalized Onsager formalism provides a theoretical framework for developing thermodynamically consistent (TC) models describing non-equilibrium phenomena. Many well-known thermodynamically consistent PDE systems are, in fact, derivable from the generalized Onsager principle, including the Navier-Stokes equation, the Fokker Planck (or Smoluchowski) equation, the gradient flow models, the thermodynamically consistent viscoelastic fluid models, non-isothermal hydrodynamic models, etc.
By casting TCPDE models into the generalized Onsager form,  the underlying physical mechanism for reversible and irreversible processes is put on full display \cite{OnsagerL1, OnsagerL2, Yang&Li&Forest&Wang2016, JoannyNJP2007}.
We believe that any physically meaningful dynamical model describing non-equilibrium phenomena must be derived following thermodynamical principles and obey necessary conservation laws. The physical laws include the first law of thermodynamics (energy conservation), the second law of thermodynamics (positive entropy production) or, more generally, the generalized Onsager principle in the linear response regime \cite{OnsagerL1, OnsagerL2}, and additional conservation laws: mass, linear momentum, angular momentum and so on. These laws can also be viewed as constraints imposed on the thermodynamical (and hydrodynamical) variables. In this paper, we refer that models derived from the Onsager principle and subject to the necessary conservation laws thermodynamically consistent.

Consider the domain $\Omega$ and time $t \in (0, T]$, and denote $\Omega_T = \Omega \times (0, T]$, and the state variables as $\Phi=(\phi_1,\cdots,\phi_n)^T$.  We recall the generalized Onsager principle using a generic model as an example \cite{OnsagerL1, OnsagerL2}. It consists of three key ingredients: the state/thermodynamic variables $\Phi$, the free energy $\cE$, and a mobility matrix (or operator) $\cG$, all of which will dictate the kinetic equation, namely
\beq
\mbox{ the Onsager triple: } (\Phi, \cG, \cE).
\eeq 
The kinetic equation, stemming from the Onsager linear response theory,   is given by
\begin{subequations} \label{eq:evolution-general}
\begin{align}
&  \partial_t \Phi (\bx,t) = - \cG \frac{\delta \cE}{\delta \Phi} \mbox{ in } \Omega, \\
&\cB(\Phi(\bx,t)) = g(\bx,t), \mbox{ on } \partial \Omega,
\end{align}
\end{subequations}
where  $\cB$ is a trace operator,  and $\cG$ is the mobility operator that contains two parts:
\beq
\cG=\cG_a+\cG_s.
\eeq
Here $\cG_s$  is symmetric and positive semi-definite that controls the irreversible dynamics, and $\cG_a$  is skew-symmetric that controls reversible dynamics.  $\frac{\delta F}{\delta \Phi}$ is the variational derivative of $\cE$, known as the chemical potential. Then, the Onsager triple $(\Phi,\cG,\cE)$ uniquely defines a thermodynamically consistent model. 

One intrinsic property  of \eqref{eq:evolution-general} owing to the thermodynamical consistency  is the energy dissipation law
\begin{subequations} \label{EDL}
\begin{align}
& \frac{d \cE}{d t} = \Big( \frac{\delta \cE}{\delta \Phi},   \frac{\partial\Phi}{\partial t} \Big)+ \dot{\cE}_{surf} =\dot{\cE}_{bulk}+\dot{\cE}_{surf},\\
&\dot{\cE}_{bulk}=-\Big( \frac{\delta \cE}{\delta \Phi}, \cG_s \frac{\delta F}{\delta \Phi} \Big) \leq 0, \\
& \Big( \frac{\delta \cE}{\delta \Phi}, \cG_a \frac{\delta \cE}{\delta \Phi} \Big) = 0, \quad  \dot{\cE}_{surf}=\int_{\partial \Omega} g_b ds, 
\end{align}
\end{subequations}
where the inner product is defined by
$$
\Big({\bf f}, {\bf g}\Big) = \sum\limits_{i=1}^d \int_\Omega f_i g_i d\bx, \quad \forall f, g \in [L^2(\Omega)]^d, 
$$
and $\dot{\cE}_{surf}$ is due to the boundary contribution, and $g_b$ is the boundary integrand.
When $\cG_a=0$, \eqref{eq:evolution-general} is a purely dissipative system; while $\cG_s=0$, it is a purely dispersive system. $\dot{\cE}_{surf}$ vanishes only for suitable boundary conditions, which include periodic and certain physical boundary conditions.
When the mass, momentum, and total energy conservation are present in hydrodynamic models, these conservation laws are viewed as constraints imposed on the hydrodynamic variables. Then, the energy dissipation rate will have to be calculated subject to the constraints.

\subsection{Thermodynamically Consistent Incompressible Hydrodynamic Models as Constraint Gradient Flow Models} 

This section elaborates that many existing thermodynamically consistent incompressible hydrodynamic models can be written in the form of \eqref{eq:evolution-general}. Though we mainly focus on incompressible hydrodynamic models, we shall emphasize that the proposed numerical framework works on other quasi-incompressible or compressible hydrodynamics models so long that they are thermodynamically consistent.

First of all,  we introduce a few important notations that will help to explain the reformulation procedure. 
Recall the incompressible Navier-Stokes equations
\begin{subequations} \label{eq:NS-equations}
\begin{align}
&\rho (\partial_t \bu + \bu \cdot \nabla \bu) = - \nabla p + \eta \Delta \bu + \mathbf{f}, \\
& \nabla \cdot \bu = 0,
\end{align}
\end{subequations}
with $\bu$ the velocity field, $\rho$ the density, $\mathbf{f}$ is the external force, and $p$ is the pressure.
We emphasize that the pressure $p$ in \eqref{eq:NS-equations} is a Lagrangian multiplier to enforce the incompressibility of the velocity field $\bu$. With this in mind, we can reformulate the incompressible Navier-Stokes equations into a constraint gradient flow form. This will guide us in designing decoupled numerical algorithms.  

Following the notations in \cite{OttingerPRE1997A,OttingerPRE1997B}, let $P_\bu$ be a functional space defined by
\beq
P_\bu = \left\{ \bu(x, t): \bu \in [L^2(\Omega)]^d; \quad \nabla \cdot \bu =0  \mbox{ in } \Omega, \quad \bu = 0 \mbox{ on } \partial \Omega   \right\},
\eeq 
$\Pi_\bu$ denotes a projection operator, defined as
\beq \label{eq:projection-operator}
\Pi_\bu(\ba) =
\left\{
\bea{l}
\ba - \nabla p, \quad \mbox{in } \Omega / \partial \Omega, \\
0 \quad \mbox{ on } \partial \Omega,
\eea 
\right.
\eeq
where $p$ satisfies a Poisson condition with a Neumann-type boundary condition, i.e. 
$$
\left\{
\bea{l}
\Delta p = \nabla \cdot \ba, \quad  \mbox{ in } \Omega, \\
\frac{\partial p}{\partial \bn} = \ba \cdot \bn, \quad  \mbox{ on } \partial \Omega.
\eea 
\right.
$$
Definite the kinetic energy
\beq \label{eq:kinetic-energy}
E(\bu) =\int_\Omega \frac{\rho}{2}|\nabla \bu|^2 d\bx.
\eeq 
With the projection operator in \eqref{eq:projection-operator}, we denote the constraint variational derivative of the kinetic energy in \eqref{eq:energy} with respect to the velocity field as
\beq \label{eq:projection-u}
\frac{\delta E}{\delta \bu} = \Pi_\bu \frac{\partial E}{\partial \bu}.
\eeq

We rewrite the nonlinear convection term into a skew-symmetric form
\beq \label{eq:reformulation-convection}
B(\bv, \bu) = \frac{1}{2} \Big[  \bv \cdot \nabla \bu + \nabla \cdot (\bv \bu) \Big].
\eeq 
In addition, we introduce induce a trilinear form $b$ defined as \cite{Han&WangJCP2015}
\beq
b(\bv,\bu,\bw) = \Big( B(\bv,\bu), \bw \Big) = \frac{1}{2} \left[ \Big( \bv \cdot \nabla \bu, \bw \Big) - \Big( \bv \cdot \nabla \bw, \bu \Big)  \right],  \forall \bu, \bv, \bw \in \bH_0^1(\Omega).
\eeq 
It follows immediately that 
\beq
b(\bv, \bu, \bu) = 0, \quad \forall \bu, \bv \in \bH_0^1(\Omega).
\eeq

\subsection{Casting thermodynamically-consistent incompressible hydrodynamic models into the generalized Onsager form}
We emphasize that many existing thermodynamically consistent incompressible hydrodynamic models can be cast as special cases of the generalized model in \eqref{eq:evolution-general}. In this sub-section, we illustrate it by examples.

\subsubsection{A hydrodynamic model for two phase incompressible fluids}
In this model, we use $\phi(\bx,t) \in [-1, 1]$ as the phase variable, with $\phi(\bx,t)=1$ to label one phase, $\phi(\bx,t)=-1$ to label the other phase, and $\phi \in (-1, 1)$ representing the interface. The Cahn-Hilliard-Navier-Stokes (CHNS) equations are proposed as
\begin{subequations} \label{eq:CHNS}
\begin{align}
&\rho \Big( \partial_t \bu + \bu \cdot \nabla \bu \Big) = - \nabla p + \eta \nabla^2 \bu -   \phi \nabla \mu,  \quad   (\bx, t) \in \Omega_T, \\
&\nabla \cdot \bu  =  0,  \quad  (\bx, t) \in \Omega_T,  \\
&\partial_t \phi +\nabla \cdot (\bu \phi) =  M \Delta   \mu, \quad  (\bx, t) \in \Omega_T, \\
&\mu = -  \varepsilon^2 \Delta \phi +  f'(\phi), \quad (\bx, t) \in \Omega_T, 
\end{align}
\end{subequations}
where $\eta$ is the viscosity parameter, $M$ is the mobility parameter, $p$ is the hydrodynamic pressure, and $\mu = \frac{\delta F}{\delta \phi}$ is the chemical potential. 
The boundary conditions could be
\begin{subequations}
\begin{align}
& \mbox{(i) periodic on } \partial \Omega; \mbox{ or}  \\
& \label{eq:CHNS-boundary}  \bu(\bx, t) = 0, \quad \nabla \mu(\bx, t) \cdot \bn = 0 , \quad \nabla \phi(\bx, t) \cdot \bn =0, \quad (\bx, t) \in \partial \Omega \times (0, T],
\end{align}
\end{subequations}
with $\bn$ the outward normal vector at the boundary.
The total energy of the two phase fluid-mixture system $\mathcal{E}$ include the Helmholtz free energy $F$ and the kinetic energy $E$, i.e.
\beq \label{eq:energy}
\cE(\bu, \phi)  = F(\phi) + E(\bu), \quad F(\phi)= \int_\Omega  \Big( \frac{\varepsilon^2}{2} | \nabla \phi |^2 +  f(\phi)  \Big) d\bx, 
\eeq
where $E(\bu)$ is the kinetic energy defined in \eqref{eq:kinetic-energy}, and
$\varepsilon$ is an artificial parameter controlling the interfacial thickness. $f(\phi)$ is the bulk free energy for the two phase material. $\bu$ the volume-averaged velocity, and $\rho$ is the volume-averaged density. 

The CHNS system  in \eqref{eq:CHNS}-\eqref{eq:CHNS-boundary} is known to satisfy the second law of thermodynamics, with the energy dissipation rate calculated as
\beq \label{eq:energy-law-continous}
\frac{d\cE}{dt} = - \int_\Omega \Big( M|\nabla \mu|^2 + \eta |\nabla \bu|^2  \Big) d\bx.
\eeq 
To be specific, for the Cahn-Hilliard-Navier-Stokes equation in \eqref{eq:CHNS}, it can be written in the form of \eqref{eq:evolution-general}, if we denote 
$\Phi =
\begin{bmatrix}
\bu \\ \phi
\end{bmatrix}
$, and definite the operators
\beq
\cG_a = 
\begin{pmatrix}
-\frac{1}{\rho} B(\bu, \bullet )  & - \frac{1}{\rho} \phi \nabla \bullet  \\
-\frac{1}{\rho} \nabla \cdot (\bullet \phi) & 0 \\
\end{pmatrix}, 
\quad 
\cG_s =
\begin{pmatrix}
\frac{1}{\rho^2} \eta \Delta \bullet & 0 \\
0 & M \Delta  \bullet
\end{pmatrix}.
\eeq

\subsubsection{Hydrodynamic Ericksen-Leslie model for nematic liquid crystals}
The widely acceptable hydrodynamic theory for small molecular weight,  nematic liquid crystal flows is the Ericksen-Leslie model \cite{Leslie1979}.  In this theory, a vector $\bf p$ is used to describe the average molecular orientation and $\bu$ is the mass average velocity. The Ericksen-Leslie hydrodynamic model reads as
\begin{subequations} \label{eq:LC-Ericksen-Lesile}
\begin{align}
& \rho(\partial_t \bu + \bu \cdot \nabla \bu) = -\nabla p + \eta \Delta \bu + \nabla \cdot ( \frac{1}{2}(\bp \bh -\bh \bp) - \frac{a}{2}(\bp \bh +\bh \bp) ) - \bh \nabla \bp, \\
& \nabla \cdot \bu = 0, \\
& \partial_t \bp + (\bu \cdot \nabla) \bp -\bW\cdot \bp - a\bD \cdot \bp = M \bh, \\
& \bh = K \Delta \bp - \frac{1}{\varepsilon^2} (|\bp|^2 -1) \bp,
\end{align}
\end{subequations}
where $\rho$ is the mass density of the liquid crystal solution, $a$ is a parameter associated to the molecular geometry, $\varepsilon$ controls the defect length scale, $K$ is the elastic constant, and $M$ is the mobility coefficient. The Ericksen-Leslie model is usually associated with the homogeneous boundary conditions
\begin{subequations}  \label{eq:LC-Ericksen-Lesile-boundary}
\begin{align}
& \mbox{i.} \quad \bu(\bx, t) = 0, \quad  \bp(x, t) = 0, \quad (\bx, t) \in \partial \Omega \times (0, T]; \mbox{ or }\\
& \mbox{ii.} \quad \bu(\bx, t) = 0, \quad  \frac{\partial \bp(x, t)}{\partial \bn} = 0, \quad (\bx, t) \in \partial \Omega \times (0, T],
\end{align}
\end{subequations}
with $\bn$ the outward normal vector at the boundary.

The nematic liquid crystal model in \eqref{eq:LC-Ericksen-Lesile} can be derived in a similar manner. The total energy is
\beq
\cE(\bu, \bp) = E(\bu) + F(\bp), 
\eeq 
with $E(\bu)$ the kinetic energy defined in \eqref{eq:kinetic-energy}, and $F(\bp)$ the Ossen-Frank free energy, given by 
\beq
F(\bp) = \int_\Omega  d\bx \Big[ \frac{K}{2}|\nabla \bp|^2 + \frac{1}{4\varepsilon^2}(|\bp|^2 -1)^2 \Big],
\eeq
in the form of the one-constant approximation,  where $K$ is the Frank elastic constant and $\varepsilon$ is a small parameter for the width of the diffuse interface. And the molecular field can be derived as
$\bh  : = -\frac{\delta \cE}{\delta \bp}$. 
Hence, if we denote $\Phi = \begin{bmatrix}\bu \\ \bp  \end{bmatrix}$, the Ericksen-Lesile model can be writen in the general form \eqref{eq:evolution-general}, with the mobility operators given as
\begin{subequations}
\begin{align}
&
\cG_s=
\left(
\begin{array}{cc}
-\frac{1}{\rho} B(\bu, \bullet )   & \frac{1}{\rho} G_{s12}   \\  
\frac{1}{\rho} G_{s21} & 0 \\
\end{array}
\right)
, \quad
\cG_a=
\left(
\begin{array}{cc}
-\frac{1}{\rho^2} \eta \Delta \bI & 0  \\
0 & M  \\
\end{array}
\right),\\
& G_{s12}=\nabla \cdot ( \frac{1}{2}(\bp \bullet + \bullet \bp) -\frac{a}{2}(\bp \bullet + \bullet \bp)) -\bullet \nabla \bp ), \\
&G_{s21} = - \frac{1}{2}(\nabla \bullet - \nabla \bullet^T) \cdot \bp -\frac{a}{2}(\nabla \bullet + \nabla \bullet^T) \cdot \bp + \bullet \cdot \nabla \bp.
\end{align}
\end{subequations}

\section{Decoupled numerical algorithms based on the EQ-RID method}\label{sec:generic-numerical-method}
Through the examples in the previous section, we are clear that the model \eqref{eq:evolution-general} is rather general, that many widely used incompressible hydrodynamic models, including \eqref{eq:CHNS} and \eqref{eq:LC-Ericksen-Lesile}, can be cast into its form. In this section, we propose numerical algorithms for the general model \eqref{eq:evolution-general}, which in turn will guide us to develop numerical algorithms for specific models that can be cast in \eqref{eq:evolution-general}.

\subsection{Model reformulation with the energy quadratization (EQ) method}
In the fist step, we transform the general model \eqref{eq:evolution-general} into the energy-quadratized form, using the the idea of energy quadratization (EQ). Denote the total energy as
\beq \label{eq:general-E}
\cE(\Phi) = \int_\Omega e d\bx,
\eeq 
with $e$ the energy density function. We denote $L_0$ as a linear operator that can be separated from $e$. For instance, for the CHNS system in \eqref{eq:CHNS}, we may denote
\beq
\cL_0 = \begin{bmatrix}
\rho & 0 \\
0 & - \varepsilon^2 \Delta + \gamma_0 
\end{bmatrix}.
\eeq 
Introduce the auxiliary variable
\beq \label{eq:EQ-intermediate-variable}
q = \sqrt{2\Big(e -\frac{1}{2}|\cL_0^{\frac{1}{2}} \Phi|^2+ \frac{A_0}{|\Omega|}\Big)},
\eeq
where  $A_0$ is  such that $q$ is a well defined real variable. Then we rewrite the energy in \eqref{eq:general-E} as
\beq
\cE(\Phi, q) =  \frac{1}{2}\Big(\Phi, \cL_0\Phi \Big) + \frac{1}{2}\Big( q, q \Big) - A_0.
\eeq 
With the EQ approach above,  we transform the free energy density into a quadratic one by introducing an auxiliary variable to "remove" the quadratic gradient term from the energy density.
Assuming $q =q(\Phi,\nabla \Phi)$ and denoting
\beq   \label{eq:partial_derivative}
g(\Phi)= \frac{\partial q}{\partial \Phi},  \quad  \bG(\Phi) = \frac{\partial q}{\partial \nabla \Phi},
\eeq
we reformulate  \eqref{eq:evolution-general} into an equivalent form
\begin{subequations} \label{eq:evolution-general-EQ}
\begin{align}
&\partial_t \Phi = -(\cG_a + \cG_s) \Big[ \cL_0 \Phi + q g(\Phi) - \nabla \cdot ( q \bG(\Phi))\Big] , \\
&\partial_t q = g(\Phi): \partial_t \Phi + \bG(\Phi): \nabla \partial_t \Phi, 
\end{align}
\end{subequations}
with the consistent initial condition $q|_{t=0} =\left. \sqrt{2\Big(e -\frac{1}{2}|\cL_0^{\frac{1}{2}} \Phi|^2+ \frac{A_0}{|\Omega|}\Big)}\right|_{t=0}$.
Now, instead of dealing with \eqref{eq:evolution-general} directly, we develop   structure-preserving schemes for    \eqref{eq:evolution-general-EQ}.

The advantage of using  model \eqref{eq:evolution-general-EQ} over   model \eqref{eq:evolution-general} is that the   energy density is transformed into a quadratic one  in  \eqref{eq:evolution-general-EQ}.
Denoting $\Psi =\begin{bmatrix} \Phi \\ q \end{bmatrix}$, we rewrite \eqref{eq:evolution-general-EQ} into a compact from
\beq \label{eq:evolution-general-EQ-vector}
 \partial_t \Psi  = -\cN(\Psi) \cL \Psi,  \quad \mbox{ with } \cN(\Psi) = \cN_s(\Psi) + \cN_a(\Psi),
\eeq 
where  $\cL = \begin{bmatrix}
\cL_0 & \\
& 1
\end{bmatrix}_{n+1,n+1},$ is a linear operator, and 
\begin{subequations}
\begin{align}
& \cN(\Psi) = \cN_s(\Psi) + \cN_a(\Psi), \\
& \cN_a(\Psi) = \cN_0^\ast \cG_a  \cN_0 ,   \cN_s(\Psi) = \cN_0^\ast \cG_s \cN_0, \\
&\cN_0= (\bI_n, \,\,\, g(\Phi) + \bG(\Phi)\colon\nabla)_{n,n+1} , 
\end{align}
\end{subequations}
and $\cN_0^\ast$ is the adjoint operator of $\cN_0$. We name it the Onsager-Q model, where the energy is
\beq
\cE(\Psi) = \frac{1}{2}(\Psi, \cL \Psi) - A_0, 
\eeq 
with the energy law given as
\beq  \label{eq:energy-law-EQ}
\frac{d \cE(\Psi)}{d t}
= \Big( \frac{\delta \cE}{\delta \Psi} \frac{d \Psi}{d t}, 1 \Big)
= -\Big( \ \cL \Psi,  \cN(\Psi) \cL \Psi  \Big) = -\Big( \cN_0 \cL \Psi,  \cG_s \cN_0 \cL \Psi  \Big)   \leq  0,
\eeq
when $\dot{\cE}_{surf}=0$. This  is called the energy quadratization (EQ) reformulation (or method).
Note that the Onsager-Q model's energy is quadratized so that we can develop a paradigm to derive linear, energy-stable numerical schemes for the model.

\subsection{Model reformulation to decouple the reversible and irreversible dynamics}
We introduce an auxiliary scalar variable $s(t)$, for instance $s(t) = e^{-\frac{t}{T}}$, such that $s(t)e^{\frac{t}{T}}=1$, where $T$ is the final time. And we reformulate the Onsager-Q model in \eqref{eq:evolution-general-EQ} by multiplying the constant $s(t)e^{\frac{t}{T}}=1$ on the reversible terms. This brings us the equivalent system as
\begin{subequations} \label{eq:gradient-flow-decoupled}
\begin{align}
&  \label{eq:gradient-flow-decoupled-1} \partial_t \Psi =- (\cN_s + s e^{\frac{t}{T}}\cN_a)  \cL \Psi, \\
&\label{eq:gradient-flow-decoupled-2} \partial_t s =  -\frac{1}{T} s + e^{\frac{t}{T}} \Big( \cL \Psi, \cN_a \cL \Psi\Big),  \quad s(t=0) = 1. 
\end{align}
\end{subequations}

\begin{rem}
We emphasis that the reformulated model \eqref{eq:gradient-flow-decoupled} is equivalent to the original generalized model \eqref{eq:evolution-general}. Notice that in the continuous level, $\Big(\cL \Psi, \cN_a \cL\Psi\Big)=0$ so that $s(t)$ can be solved in \eqref{eq:gradient-flow-decoupled-2} as $s(t) = e^{\frac{t}{T}}$. Plugging this into \eqref{eq:gradient-flow-decoupled-1}, we have
$\partial_t \Psi = (\cN_s + \cN_a) \cL \Psi$, which reduces to \eqref{eq:evolution-general-EQ-vector} that is equivalent to \eqref{eq:evolution-general}.
\end{rem}

In the rest of this paper, we focus on developing numerical algorithms for the reformulated equations in \eqref{eq:gradient-flow-decoupled}. It will be clear in the later section that the reformulated system in \eqref{eq:gradient-flow-decoupled} provides guidance on designing accurate and efficient numerical algorithms. In particular, it decouples the reversible and irreversible dynamics, which automatically decouples the equations when the coupling terms are only in irreversible dynamics.

\subsection{Generic numerical algorithms for the generalized Onsager model} 

Consider the time domain $t \in [0, T]$. We discretize it into uniform meshes 
$0=t_0 < t_1< t_2< \cdots < t_{N-1}< t_N = T,$
with $N$ a positive integer. In other words, we have $t_i = i\delta t$ with $\delta t = \frac{T}{N}$. We also introduce the notations:
\begin{subequations}
\begin{align}
 &\overline{(\bullet)}^{n+1} = 2 (\bullet)^n - (\bullet)^{n-1},  \quad \overline{(\bullet)}^{n+\frac{1}{2}} = \frac{3}{2} (\bullet)^n - \frac{1}{2} (\bullet)^{n-1}, \\ & (\bullet)^{n+\frac{1}{2}} = \frac{1}{2} (\bullet)^n +  \frac{1}{2} (\bullet)^{n+1}, \quad (\hat{\bullet})^{n+\frac{1}{2}} = \frac{1}{2} (\bullet)^n +  \frac{1}{2} (\hat{\bullet})^{n+1}.
\end{align}
\end{subequations}

\subsubsection{Generic BDF2 numerical schemes}
For the reformulated system in \eqref{eq:gradient-flow-decoupled}, we propose the following semi-implicit numerical algorithm based on the second-order backward differentiation formula (BDF2).
 
\begin{scheme}[Semi-implicit BDF time-integration scheme]  \label{scheme:GENERIC-BDF2-decoupled}
With $(\Psi^{n-1},s^{n-1})$ and $(\Psi^n,s^n)$, we compute $(\Psi^{n+1},s^{n+1})$ via
\begin{subequations}\label{eq:GENERIC-BDF2-decoupled}
\begin{align}
& \label{eq:BDF-1} \frac{3\Psi^{n+1} - 4\Psi^n+\Psi^{n-1}}{2\delta t}  = -\overline{\cN}_s^{n+1}  \cL \Psi^{n+1}- s^{n+1} e^{\frac{ t_{n+1}}{T}} \overline{\cN}_a^{n+1}  \cL \overline{\Psi}^{n+1}, \\
& \label{eq:BDF-2}  \frac{3s^{n+1} -4 s^n+s^{n-1}}{2\delta  t} = -\frac{1}{T} s^{n+1}  + e^{\frac{t_{n+1}}{T}}\Big( \cL \Psi^{n+1}, \overline{\cN}_a^{n+1}  \cL \overline{\Psi}^{n+1}\Big).
\end{align}
\end{subequations}
\end{scheme}

\begin{thm}
The scheme \ref{scheme:GENERIC-BDF2-decoupled} is unconditionally energy stable, in the sense that
\beq
\cE(\Psi^{n+1}, \Psi^n, s^{n+1}, s^n)  - \cE(\Psi^n, \Psi^{n-1}, s^n, s^{n-1})  \leq - \delta t (\cL\Psi^{n+1}, \,\,\,   \overline{\cN}_s^{n+1}  \cL\Psi^{n+1}) -  \frac{\delta t}{T} (s^{n+1})^2,
\eeq 
where the modified free energy is defined as
\begin{multline}
\cE(\Psi_1, \Psi_2,  s_1, s_2) = \frac{1}{4} \Big[ (\Psi_1, \cL\Psi_1) + \Big(2\Psi_1 - \Psi_2, \cL(2\Psi_1 -\Psi_2) \Big)\Big]  + \frac{1}{4} \Big[  (s_1)^2 + (2s_1-s_2)^2 \Big]- A_0 .
\end{multline}
\end{thm}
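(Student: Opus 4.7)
The plan is to perform a standard energy estimate by testing both equations of Scheme \ref{scheme:GENERIC-BDF2-decoupled} with appropriate quantities and exploiting the designed cancellation between the reversible coupling terms. First I would take the $L^2$ inner product of \eqref{eq:BDF-1} with $\mathcal{L}\Psi^{n+1}$, which produces the term $-(\overline{\mathcal{N}}_s^{n+1}\mathcal{L}\Psi^{n+1},\mathcal{L}\Psi^{n+1})$ (the dissipation I want on the right-hand side of the stability estimate) together with the cross term $-s^{n+1} e^{t_{n+1}/T}(\overline{\mathcal{N}}_a^{n+1}\mathcal{L}\overline{\Psi}^{n+1},\mathcal{L}\Psi^{n+1})$. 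Then I would multiply \eqref{eq:BDF-2} by $s^{n+1}$, giving $-\frac{1}{T}(s^{n+1})^2$ and the cross term $s^{n+1} e^{t_{n+1}/T}(\mathcal{L}\Psi^{n+1},\overline{\mathcal{N}}_a^{n+1}\mathcal{L}\overline{\Psi}^{n+1})$. Adding the two identities eliminates both reversible cross terms exactly; this is precisely the point of introducing $s(t)$ and decoupling the skew-symmetric part $\mathcal{N}_a$ from the symmetric part $\mathcal{N}_s$.

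The next step is to handle the BDF2 time-difference terms by a telescoping identity. For any self-adjoint positive semi-definite operator $\mathcal{L}$ and sequence $a^{n-1},a^n,a^{n+1}$,
\begin{align*}
(3a^{n+1}-4a^n+a^{n-1},\mathcal{L}a^{n+1}) &= \tfrac{1}{2}\bigl[(a^{n+1},\mathcal{L}a^{n+1})-(a^n,\mathcal{L}a^n)\\
&\quad +(2a^{n+1}-a^n,\mathcal{L}(2a^{n+1}-a^n))-(2a^n-a^{n-1},\mathcal{L}(2a^n-a^{n-1}))\\
&\quad +(a^{n+1}-2a^n+a^{n-1},\mathcal{L}(a^{n+1}-2a^n+a^{n-1}))\bigr].
\end{align*}
I would apply this with $a=\Psi$ (using $\mathcal{L}$) and with $a=s$ (using the identity operator, i.e. the scalar version). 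The sum of the first four bracketed terms exactly reproduces the increment $\mathcal{E}(\Psi^{n+1},\Psi^n,s^{n+1},s^n)-\mathcal{E}(\Psi^n,\Psi^{n-1},s^n,s^{n-1})$ after dividing by $2\delta t$ and multiplying through, while the remaining "second-difference squared" terms are non-negative because $\mathcal{L}$ is positive semi-definite (and $1>0$ for the scalar piece). Dropping those non-negative terms yields the required inequality rather than equality.

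Collecting everything and multiplying by $\delta t$, the left-hand side becomes the energy increment and the right-hand side is $-\delta t(\mathcal{L}\Psi^{n+1},\overline{\mathcal{N}}_s^{n+1}\mathcal{L}\Psi^{n+1})-\frac{\delta t}{T}(s^{n+1})^2$, which is the stated estimate. Note that $(\mathcal{L}\Psi^{n+1},\overline{\mathcal{N}}_s^{n+1}\mathcal{L}\Psi^{n+1})\ge 0$ since $\mathcal{N}_s=\mathcal{N}_0^{\ast}\mathcal{G}_s\mathcal{N}_0$ inherits positive semi-definiteness from $\mathcal{G}_s$, so the right-hand side is genuinely non-positive and unconditional energy stability follows.

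The main obstacle I anticipate is not algebraic but structural: one must verify that the two cross terms really cancel without any residual. This requires the extrapolated operator $\overline{\mathcal{N}}_a^{n+1}$ to be evaluated at the \emph{same} state on both sides (so that the inner product symmetry transfers the term unchanged between the two equations) and it requires the same extrapolated argument $\mathcal{L}\overline{\Psi}^{n+1}$ to appear in both. A secondary subtlety is the implicit assumption that $\mathcal{L}$ (and in particular $\mathcal{L}_0$) is symmetric and positive semi-definite with respect to the chosen inner product, which is what justifies using the BDF2 telescoping identity and interpreting $(\Psi,\mathcal{L}\Psi)$ as part of a genuine energy. Provided these two structural points are granted, the proof is essentially a bookkeeping exercise.
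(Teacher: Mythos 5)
Your proposal is correct and follows essentially the same route as the paper: test \eqref{eq:BDF-1} with $\cL\Psi^{n+1}$ and \eqref{eq:BDF-2} with $s^{n+1}$, apply the BDF2 telescoping identity $a(3a-4b+c)=\tfrac{1}{2}\big[a^2+(2a-b)^2\big]-\tfrac{1}{2}\big[b^2+(2b-c)^2\big]+\tfrac{1}{2}(a-2b+c)^2$ (the paper states the halved version), drop the non-negative second-difference terms, and let the two reversible cross terms cancel upon addition. Your added remarks on the structural prerequisites (identical extrapolated operator and argument in both cross terms, self-adjointness and positive semi-definiteness of $\cL$, and $\cN_s=\cN_0^{\ast}\cG_s\cN_0\succeq 0$) are accurate and make explicit what the paper leaves implicit.
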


\begin{proof}
First of all, notice the equality
$$
a\frac{3 a - 4b + c}{2} =  \frac{1}{4} (a^2 + (2a-b)^2) - \frac{1}{4}( b^2 + (2b-c)^2) + \frac{1}{4}(a-2b+c)^2.
$$
If we take inner product of \eqref{eq:BDF-1} with $\delta t \cL \Psi^{n+1}$, we will have
\begin{multline} \label{eq:bdf-proof-1}
\frac{1}{4}\Big[ \Big( \Psi^{n+1}, \cL \Psi^{n+1} \Big)   + \Big(2 \Psi^{n+1} - \Psi^{n}, \cL(2 \Psi^{n+1} - \Psi^{n}) \Big) \Big] -
\frac{1}{4}\Big[ \Big( \Psi^{n}, \cL \Psi^{n} \Big)   
+ \Big(2 \Psi^{n} - \Psi^{n-1}, \\
\cL(2 \Psi^{n} - \Psi^{n-1}) \Big) \Big] -
 \leq 
-\delta t  \Big( \cL \Psi^{n+1},  \overline{\cN}_s^{n+1}  \cL \Psi^{n+1} \Big) - \delta t s^{n+1} e^{\frac{ t_{n+1}}{T}} \Big( \cL\Psi^{n+1}, \overline{\cN}_a^{n+1}  \cL \overline{\Psi}^{n+1} \Big).
\end{multline}
Then, if we take the inner product of \eqref{eq:BDF-2} with $\delta t s^{n+1}$, we will obtain
\begin{multline} \label{eq:bdf-proof-2}
\frac{1}{4} \Big[ (s^{n+1})^2 + (2s^{n+1}-s^n)^2 \Big] - \frac{1}{4} \Big[ (s^n)^2 + (2s^n-s^{n-1})^2 \Big] \leq  \\
-\frac{\delta t}{T} (s^{n+1})^2 + \delta t s^{n+1} e^{\frac{ t_{n+1}}{T}}  \Big( \cL\Psi^{n+1}, \overline{\cN}_a^{n+1}  \cL \overline{\Psi}^{n+1} \Big).
\end{multline}
Adding up the two equations \eqref{eq:bdf-proof-1} and \eqref{eq:bdf-proof-2} above will lead us to
\beq
\cE(\Psi^{n+1}, \Psi^n, s^{n+1}, s^n)  - \cE(\Psi^n, \Psi^{n-1}, s^n, s^{n-1})  \leq - \delta t  \Big( \cL\Psi^{n+1},   \overline{\cN}_s^{n+1}  \cL \Psi^{n+1} \Big) -  \frac{\delta t}{T} (s^{n+1})^2.
\eeq 
Thus, this completes the proof.

\end{proof}

Next, we further elaborate how the scheme \ref{scheme:GENERIC-BDF2-decoupled} can be implemented effectively.
Notice the equations \eqref{eq:GENERIC-BDF2-decoupled} can be rewritten in the form of
\begin{subequations}
\begin{align}
&\cA \Psi^{n+1} = F_1 + s^{n+1} F_2, \\
&s^{n+1} = c^n + \Big( \cL\Psi^{n+1}, F_3\Big),
\end{align}
\end{subequations}
where the operators are given as
\begin{subequations}
\begin{align}
& \cA= (\frac{3}{2 \delta t} + \overline{\cN}_s^{n+1} \cL), \quad    F_1 = \frac{4 \Psi^n - \Psi^{n-1}}{2\delta t}, \quad F_2 = -  e^{\frac{ t_{n+1}}{T}} \overline{\cN}_a^{n+1}  \cL \overline{\Psi}^{n+1}, \\
& c_n = \frac{ \frac{ 4s^n- s^{n-1}}{2 \delta t}}{ \frac{3}{2 \delta t} + \frac{1}{T}}, \quad F_3 = \frac{1}{ \frac{3}{2 \delta t} + \frac{1}{T}} e^{\frac{t_{n+1}}{T}} \overline{\cN}_a^{n+1} \cL \overline{\Psi}^{n+1}.
\end{align}
\end{subequations}
We denote the solution as
$
\Psi^{n+1} = \Psi_1^{n+1} + s^{n+1} \Psi_2^{n+1},
$
where $\Psi_1^{n+1}$ and $\Psi_2^{n+1}$ are the solutions for
\beq \label{eq:BDF2-phi}
\cA \Psi_1^{n+1}  = F_1, \quad  \cA \Psi_2^{n+1} = F_2.
\eeq 
Meanwhile, we can solve the scalar variable $s^{n+\frac{1}{2}}$ from
$
s^{n+1} = c_n + \Big( B\Psi_1^{n+1}, F_3\Big) + s^{n+1} \Big( B \Psi_2^{n+1}, F_3\Big),
$
from which we can easily obtain the formula
\beq
s^{n+1} =  \frac{c_n + \Big( B\Psi_1^{n+1}, F_3\Big) }{1 - \Big( B \Psi_2^{n+1}, F_3\Big)}.
\eeq 
The Solution existence and uniqueness will depend on the existence of $s^{n+1}$. Roughly, $F_3 = O(\delta t)$, such that $s^{n+1}$ is well-defined when $\delta t$ is not too large.
Therefore, we get the final formula for the solution
\beq
\Psi^{n+1} = 
\Psi_1^{n+1} + \frac{c_n + \Big( B\Psi_1^{n+1}, F_3\Big) }{1 - \Big( B \Psi_2^{n+1}, F_3\Big)} \Psi_2^{n+1},
\eeq 
where $\Psi_1^{n+1}$ and $\Psi_2^{n+1}$ are the solutions of \eqref{eq:BDF2-phi}.

\subsubsection{Generic CN numerical schemes}
We can also propose an alternative second-order numerical scheme based on the idea of the CN (i.e., Crank-Nicolson) finite difference method. The scheme is given below.

\begin{scheme}[Semi-implicit CN time-integration scheme] \label{scheme:GENERIC-CN-decoupled}
With $(\Psi^{n-1},s^{n-1})$ and $(\Psi^n,s^n)$, we compute $(\Psi^{n+1},s^{n+1})$ via
\begin{subequations} \label{eq:GENERIC-CN-decoupled}
\begin{align}
& \label{eq:CN-1} \frac{\Psi^{n+1} - \Psi^n}{\delta t}  = -\overline{\cN}_s^{n+\frac{1}{2}}  \cL\Psi^{n+\frac{1}{2}} - s^{n+\frac{1}{2}} e^{\frac{ t_{n+1/2}}{T}} \overline{\cN}_a^{n+\frac{1}{2}}  \cL \overline{\Psi}^{n+ \frac{1}{2}} ,\\
& \label{eq:CN-2} \displaystyle \frac{s^{n+1} - s^n}{\delta  t} = -\frac{1}{T} s^{n+\frac{1}{2}} +  e^{\frac{t_{n+1/2}}{T}}\Big( \cL \Psi^{n+\frac{1}{2}}, \overline{\cN}_a^{n+\frac{1}{2}}   \cL \overline{\Psi}^{n+ \frac{1}{2}}\Big).
\end{align}
\end{subequations} 
\end{scheme}

\begin{thm} \label{thm:GENERIC-CN-decoupled}
The scheme \ref{scheme:GENERIC-CN-decoupled} is unconditionally energy stable, in the sense that
\beq
\cE(\Psi^{n+1}, s^{n+1})  - \cE(\Psi^n, s^n) = - \delta t \Big(\cL\Psi^{n+\frac{1}{2}},   \overline{\cN}_s^{n+\frac{1}{2}}  \cL\Psi^{n+\frac{1}{2}}\Big) -  \frac{\delta t}{T} (s^{n+\frac{1}{2}})^2,
\eeq 
where the modified free energy is defined as
\beq
\cE(\Psi, s) = \frac{1}{2}\Big( \Psi, \cL\Psi\Big) - A_0 + \frac{1}{2} s^2.
\eeq 
\end{thm}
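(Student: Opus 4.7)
The plan is to mirror the BDF2 argument but exploit the time symmetry of the Crank--Nicolson discretization, which will yield an exact energy identity rather than an inequality with a numerical dissipation remainder. First I would take the inner product of \eqref{eq:CN-1} with $\delta t\,\cL \Psi^{n+\frac{1}{2}}$. Using the self-adjointness of $\cL$ together with the telescoping identity $\bigl(a-b,\,\cL\tfrac{a+b}{2}\bigr) = \tfrac{1}{2}\bigl[(a,\cL a) - (b,\cL b)\bigr]$, the left-hand side collapses to $\tfrac{1}{2}(\Psi^{n+1},\cL\Psi^{n+1}) - \tfrac{1}{2}(\Psi^n,\cL\Psi^n)$. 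On the right-hand side, the first term immediately produces the desired dissipative contribution $-\delta t\bigl(\cL\Psi^{n+\frac{1}{2}}, \overline{\cN}_s^{n+\frac{1}{2}}\cL\Psi^{n+\frac{1}{2}}\bigr)$, and the second term leaves a cross term of the form $-\delta t\, s^{n+\frac{1}{2}} e^{t_{n+1/2}/T}\bigl(\cL\Psi^{n+\frac{1}{2}}, \overline{\cN}_a^{n+\frac{1}{2}}\cL\overline{\Psi}^{n+\frac{1}{2}}\bigr)$.

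Second, I would multiply \eqref{eq:CN-2} by $\delta t\, s^{n+\frac{1}{2}}$. By the scalar version of the same midpoint identity, the left-hand side collapses to $\tfrac{1}{2}(s^{n+1})^2 - \tfrac{1}{2}(s^n)^2$. The first term on the right gives $-\tfrac{\delta t}{T}(s^{n+\frac{1}{2}})^2$, while the second produces exactly $+\delta t\, s^{n+\frac{1}{2}} e^{t_{n+1/2}/T}\bigl(\cL\Psi^{n+\frac{1}{2}}, \overline{\cN}_a^{n+\frac{1}{2}}\cL\overline{\Psi}^{n+\frac{1}{2}}\bigr)$, which is the exact negative of the surviving cross term from the first step.

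Adding the two identities then makes the reversible coupling terms cancel identically, producing the claimed equality for the modified energy $\cE(\Psi,s) = \tfrac{1}{2}(\Psi,\cL\Psi) + \tfrac{1}{2}s^2 - A_0$. Unconditional energy stability follows at once because $\overline{\cG}_s^{n+\frac{1}{2}}$ is symmetric positive semi-definite by the Onsager structure, hence so is $\overline{\cN}_s^{n+\frac{1}{2}} = (\overline{\cN}_0^{n+\frac{1}{2}})^\ast\, \overline{\cG}_s^{n+\frac{1}{2}}\, \overline{\cN}_0^{n+\frac{1}{2}}$, and both terms on the right-hand side are manifestly non-positive.

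The bookkeeping is routine; the conceptual subtlety worth emphasizing is that the design of the reformulated system \eqref{eq:gradient-flow-decoupled} was engineered precisely so that the coefficient of the $\overline{\cN}_a$ coupling in the $\Psi$-equation and the forcing in the $s$-equation are compatible under the two natural test functions $\cL\Psi^{n+\frac{1}{2}}$ and $s^{n+\frac{1}{2}}$. Unlike the continuous case, where one invokes the skew-symmetry of $\cG_a$ to kill the reversible contribution, here no skew-symmetry argument is required: $\cL\overline{\Psi}^{n+\frac{1}{2}}$ is extrapolated and differs from $\cL\Psi^{n+\frac{1}{2}}$, so the ``reversible'' cross term does not vanish on its own, but the scheme cancels it structurally by reallocating it into the scalar equation for $s$. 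The only nontrivial prerequisite is that $\cL$ be self-adjoint in the inner product under the imposed boundary conditions, which is assumed throughout; once this is in hand, the identity drops out immediately.
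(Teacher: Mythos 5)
Your proposal is correct and follows essentially the same route as the paper: test \eqref{eq:CN-1} with $\delta t\,\cL\Psi^{n+\frac{1}{2}}$, test \eqref{eq:CN-2} with $\delta t\, s^{n+\frac{1}{2}}$, observe that the two reversible cross terms are exact negatives of each other, and add. Your additional remarks on the self-adjointness of $\cL$ and on the structural (rather than skew-symmetry-based) cancellation are accurate but not needed beyond what the paper records.
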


\begin{proof}
As a matter of fact, we take inner product of \eqref{eq:CN-1} with $\delta t  \cL  \Psi^{n+\frac{1}{2}}$ to get
\begin{multline}\label{eq:cn-proof-1}
\frac{1}{2} \Big( \Psi^{n+1}, \cL \Psi \Big) - \frac{1}{2}  \Big(  \Psi^n, \cL \Psi^n \Big) = \\
-\delta t \Big(\cL\Psi^{n+\frac{1}{2}},   \overline{\cN}_s^{n+\frac{1}{2}}  \cL\Psi^{n+\frac{1}{2}}\Big) - \delta t s^{n+\frac{1}{2}} e^{\frac{ t_{n+1/2}}{T}}\Big( \cL\Psi^{n+\frac{1}{2}},  \overline{\cN}_a^{n+\frac{1}{2}}  \cL \overline{\Psi}^{n+ \frac{1}{2}}\Big).
\end{multline}
Meanwhile, if we take inner product of \eqref{eq:CN-2} with $s^{n+\frac{1}{2}}$, we have
\beq \label{eq:cn-proof-2}
\frac{1}{2} (s^{n+1})^2 - \frac{1}{2}(s^n)^2 = -\frac{\delta t}{T} (s^{n+\frac{1}{2}})^2 +\delta t  s^{n+\frac{1}{2}}  e^{\frac{ t_{n+1/2}}{T}}\Big( \cL\Psi^{n+\frac{1}{2}},  \overline{\cN}_a^{n+\frac{1}{2}}  \cL \overline{\Psi}^{n+ \frac{1}{2}}\Big).
\eeq 
Adding the two equations \eqref{eq:cn-proof-1} and \eqref{eq:cn-proof-2}, we end up with
\beq
\cE(\Phi^{n+1}, s^{n+1})  - \cE(\Phi^n, s^n) = - \delta t \Big(\cL\Psi^{n+\frac{1}{2}},   \overline{\cN}_s^{n+\frac{1}{2}}  \cL\Psi^{n+\frac{1}{2}}\Big) -  \frac{\delta t}{T} (s^{n+\frac{1}{2}})^2.
\eeq 
This completes the proof.

\end{proof}

Next, we explain some implementation tricks for the Scheme \ref{scheme:GENERIC-CN-decoupled}.
First of all, we solve for $(\Psi^{n+\frac{1}{2}}, s^{n+\frac{1}{2}})$ by re-write the equation of \eqref{eq:GENERIC-CN-decoupled} as
\begin{subequations} \label{eq:GENERIC-CN-decoupled-reformulation}
\begin{align}
& \frac{ 2}{\delta t} \Big( \Psi^{n+\frac{1}{2}} - \Psi^n \Big)   =- \overline{\cN}_s^{n+\frac{1}{2}}  \cL\Psi^{n+\frac{1}{2}} - s^{n+\frac{1}{2}} e^{\frac{ t_{n+1/2}}{T}} \overline{\cN}_a^{n+\frac{1}{2}}  \cL \overline{\Psi}^{n+ \frac{1}{2}} ,\\
& \frac{2}{\delta t} \Big( s^{n+\frac{1}{2}} - s^n \Big) = -\frac{1}{T} s^{n+\frac{1}{2}} +  e^{\frac{t_{n+1/2}}{T}}\Big( \cL \Psi^{n+\frac{1}{2}}, \overline{\cN}_a^{n+\frac{1}{2}}   \cL \overline{\Psi}^{n+ \frac{1}{2}}\Big).
\end{align}
\end{subequations}

Notice that after proper arrangement, we need to solve the following  linear system for each time marching step, as
\begin{subequations} \label{eq:CN-linear-system}
\begin{align}
&\cA \Psi^{n+\frac{1}{2}}  = F_1 + s^{n+\frac{1}{2}} F_2, \\
&s^{n+\frac{1}{2}} = c_n + \Big(\cL \Psi^{n+\frac{1}{2}}, F_3\Big),
\end{align}
\end{subequations}
where $c_n$ is a constant scalar, and $F_1$, $F_2$, $F_3$ are vectors, and $A, B$ are coefficient matrices that can be calculated as
\begin{subequations}
\begin{align}
&\cA=  \Big( \frac{2}{\delta t} + \overline{\cN}_s^{n+\frac{1}{2}}  \cL \Big), \quad F_1 = \frac{2}{\delta t} \Psi^n, \quad F_2 = - e^{\frac{ t_{n+1/2}}{T}} \overline{\cN}_a^{n+\frac{1}{2}}  \cL \overline{\Psi}^{n+ \frac{1}{2}}. \\
&c_n = \frac{ \frac{2}{\delta t}}{ \frac{2}{\delta t} + \frac{1}{T}} s^n, \quad  F_3 = \frac{e^{\frac{t_{n+1/2}}{T}}}{\frac{2}{\delta t} + \frac{1}{T}}\overline{\cN}_a^{n+\frac{1}{2}}   \cL \overline{\Psi}^{n+ \frac{1}{2}}.
\end{align}
\end{subequations}

The goal is to find the solution $\Psi^{n+\frac{1}{2}}$ in \eqref{eq:CN-linear-system} such that we can obtain $\Psi^{n+1} = 2\Psi^{n+\frac{1}{2}} - \Psi^n$. Given the fact that \eqref{eq:CN-linear-system} is a linear system, we can write the solution in the form
\beq
\Psi^{n+\frac{1}{2}} = \Psi_1^{n+\frac{1}{2}} + s^{n+\frac{1}{2}} \Psi_2^{n+\frac{1}{2}},
\eeq
where $\Psi_1^{n+\frac{1}{2}}$ and $\Psi_2^{n+\frac{1}{2}}$ can be derived as 
\beq \label{eq:CN_intermedidate_solution}
\Psi^{n+\frac{1}{2}}_1 = \cA^{-1} F_1, \quad \Psi^{n+\frac{1}{2}}_2 = \cA^{-1} F_2,
\eeq 
i.e., the  are the solutions for the two systems
$\cA\Psi_1^{n+\frac{1}{2}} = F_1$ and $\cA \Psi_2^{n+\frac{1}{2}} = F_2$. 
Meanwhile, we can solve the scalar variable $s^{n+\frac{1}{2}}$ from
$$
s^{n+\frac{1}{2}} = c_n + \Big( B\Psi_1^{n+\frac{1}{2}}, F_3\Big) + s^{n+\frac{1}{2}} \Big( B \Psi_2^{n+\frac{1}{2}}, F_3\Big),
$$
from which we can easily obtain the formula
\beq
s^{n+\frac{1}{2}} =  \frac{c_n + \Big( \cL \Psi_1^{n+\frac{1}{2}}, F_3\Big) }{1 - \Big( \cL \Psi_2^{n+\frac{1}{2}}, F_3\Big)}.
\eeq 
The Solution existence and uniqueness will depend on the existence of $s^{n+\frac{1}{2}}$. Roughly, $F_3 = O(\delta t)$, such that $s^{n+\frac{1}{2}}$ is well-defined when $\delta t$ is not too large.
Therefore, we get the final formula for the solution
\beq
\Psi^{n+\frac{1}{2}} = 
\Psi_1^{n+\frac{1}{2}} + \frac{c_n + \Big( \cL \Psi_1^{n+\frac{1}{2}}, F_3\Big) }{1 - \Big( \cL \Psi_2^{n+\frac{1}{2}}, F_3\Big)} \Psi_2^{n+\frac{1}{2}}, \quad \Psi^{n+1} = 2\Psi^{n+\frac{1}{2}} - \Psi^n.
\eeq 
where  the two intermediate solutions are obtained  in \eqref{eq:CN_intermedidate_solution}.

\section{Applications of the general numerical framework for specific incompressible hydrodynamic models} \label{sec:applications}

In this section, we apply the general numerical framework to some specific incompressible hydrodynamic models. Due to space limitation, we only apply the CN Scheme \ref{scheme:GENERIC-CN-decoupled}. The application of the BDF Scheme \ref{scheme:GENERIC-BDF2-decoupled} is similar. Thus we omit the details.

\subsection{Numerical algorithms for the Cahn-Hilliard-Navier-Stokes equations}
For the Cahn-Hilliard-Navier-Stokes equation in \eqref{eq:CHNS}, we introduce the energy quadratization (EQ) notations
\beq
\cL_0 = \begin{bmatrix}
\rho & 0 \\0 & - \varepsilon^2 \Delta  + \gamma_0
\end{bmatrix}, \quad 
q = \frac{\sqrt{2}}{2} (\phi^2 - 1- \gamma_0), \quad g(\phi): =\frac{\partial q}{\partial \phi} = \sqrt{2} \phi,
\eeq 
where $\gamma_0 \geq 0$ is a regularization parameter \cite{Chen&Zhao&Yang2018}.
The reformulated equations in the Onsager-Q form read as
\begin{subequations} \label{eq:CHNS-EQ}
\begin{align}
&\rho \Big( \partial_t \bu + B(\bu, \bu) \Big) = - \nabla p + \eta \nabla^2 \bu -   \phi \nabla \mu,  \quad   (\bx, t) \in \Omega_T, \\
&\nabla \cdot \bu  =  0,  \quad  (\bx, t) \in \Omega_T,  \\
&\partial_t \phi +\nabla \cdot (\bu \phi) =  M\Delta   \mu, \quad  (\bx, t) \in \Omega_T, \\
&\mu = - \varepsilon^2 \Delta \phi + \gamma_0 \phi + g(\phi) q, \quad (\bx, t) \in \Omega_T,  \\
& \partial_t q = g(\phi) \partial_t \phi, \quad (\bx, t) \in \Omega_T,
\end{align}
\end{subequations}
with consistent initial conditions. Then, we utilize the  reversible-irreversible dynamics (RID) idea in  \eqref{eq:gradient-flow-decoupled} to reformulate the equation of \eqref{eq:CHNS-EQ} into
\begin{subequations} \label{eq:CHNS-decoupled}
\begin{align}
&\rho \Big( \partial_t \bu + s e^{\frac{t}{T}}B(\bu, \bu) \Big) = - \nabla p + \eta \nabla^2 \bu -  s e^{\frac{t}{T}} \phi \nabla \mu,  \quad   (\bx, t) \in \Omega_T, \\
&\nabla \cdot \bu  =  0,  \quad  (\bx, t) \in \Omega_T,  \\
&\partial_t \phi +  s e^{\frac{t}{T}} \nabla \cdot (\bu \phi) =  M\Delta   \mu, \quad  (\bx, t) \in \Omega_T, \\
&\mu = - \varepsilon^2 \Delta \phi + \gamma_0 \phi + g(\phi) q, \quad (\bx, t) \in \Omega_T,  \\
& \partial_t q = g(\phi) \partial_t \phi, \quad (\bx, t) \in \Omega_T, \\
& \partial_t s = -\frac{1}{T} s +  e^{\frac{t}{T}} \Big[ \Big( \bu, B(\bu, \bu)\Big)+ \Big( \bu, \phi \nabla \mu\Big) +\Big(\mu, \nabla \cdot (\bu \phi)\Big) \Big].
\end{align}
\end{subequations}
with consistent initial conditions. Afterwards, by plugging the proposed generic numerical schemes to \eqref{eq:CHNS-decoupled}, we have the following specific numerical scheme.
\begin{scheme} \label{scheme:CHNS}
Given $(\bu^n, \phi^n, q^n, s^n)$ and $(\bu^{n-1}, \phi^{n-1}, q^{n-1}, s^{n-1})$, we can update $(\bu^{n+1}, \phi^{n+1}, q^{n+1}, s^{n+1})$ via the following time-marching scheme
\begin{subequations} \label{eq:scheme-CHNS}
\begin{align}
&\rho \frac{ \bu^{n+1} - \bu^n}{\delta t} + s^{n+\frac{1}{2}} e^{\frac{t_{n+1/2}}{T}}  \rho B(\overline{\bu}^{n+\frac{1}{2}}, \overline{\bu}^{n+\frac{1}{2}}) = -\nabla p + \eta \Delta \bu^{n+\frac{1}{2}} -  s^{n+\frac{1}{2}} e^{\frac{t_{n+1/2}}{T}} \overline{\phi}^{n+\frac{1}{2}} \nabla \overline{\mu}^{n+\frac{1}{2}}, \\
&\nabla \cdot \bu^{n+1} = 0, \\
&\frac{\phi^{n+1} - \phi^n}{\delta t} + s^{n+\frac{1}{2}} e^{\frac{t_{n+1/2}}{T}}  \nabla \cdot (\overline{\bu}^{n+\frac{1}{2}} \overline{\phi}^{n+\frac{1}{2}}) = M \Delta \mu^{n+\frac{1}{2}}, \\
&\mu^{n+\frac{1}{2}} = -\varepsilon^2 \Delta \phi^{n+\frac{1}{2}} + \gamma_0 \phi^{n+\frac{1}{2}} +   q^{n+\frac{1}{2}} g(\overline{\phi}^{n+\frac{1}{2}}),  \\
&\frac{q^{n+1} - q^n}{\delta t} = g(\overline{\phi}^{n+\frac{1}{2}}) \frac{\phi^{n+1} - \phi^n}{\delta t}, \\
&\nonumber \frac{s^{n+1} - s^n}{\delta t}  = - \frac{1}{T} s^{n+\frac{1}{2}} + e^{ \frac{ t_{n+1/2}}{T} } \Big[ \Big( \bu^{n+\frac{1}{2}},   \rho B(\overline{\bu}^{n+\frac{1}{2}}, \overline{\bu}^{n+\frac{1}{2}})  + \overline{\phi}^{n+\frac{1}{2}} \nabla \overline{\mu}^{n+\frac{1}{2}}\Big)  \\
& \qquad +\Big( \mu^{n+\frac{1}{2}},  \nabla \cdot (\overline{\bu}^{n+\frac{1}{2}} \overline{\phi}^{n+\frac{1}{2}}) \Big) \Big], \\
& \nabla \mu^{n+1} \cdot \bn = 0, \quad \nabla \phi^{n+1} \cdot \bn = 0, \quad   \bu^{n+1} =0, \quad \mbox{ on } \partial  \Omega.
\end{align}
\end{subequations}
\end{scheme}

Then, we can easily show the following theorem, as an analogy to  Theorem \ref{thm:GENERIC-CN-decoupled}.
\begin{thm}
The scheme \ref{scheme:CHNS} is unconditionally energy stable, in the sense that
\beq
\cE(\bu^{n+1}, \phi^{n+1}, q^{n+1}, s^{n+1}) - \cE(\bu^{n}, \phi^{n}, q^{n}, s^{n}) = -\delta t  \Big[ M \|\nabla  \mu^{n+\frac{1}{2}} \|^2  + \eta \| \nabla \bu^{n+\frac{1}{2}} \|^2  +\frac{1}{T} (s^{n+\frac{1}{2}})^2 \Big],
\eeq 
where the modified free energy is defined as
\beq
\cE(\bu, \phi, q, s) = \frac{\rho}{2} \| \bu\|^2 + \frac{\varepsilon^2}{2} \| \nabla \phi \|^2 + (\frac{\gamma_0}{2} \phi^2  +\frac{1}{2}q^2, 1) + \frac{1}{2}s^2 - A_0, \quad A_0 = \frac{1}{4} r_0^2 + \frac{r_0}{2}.
\eeq
\end{thm}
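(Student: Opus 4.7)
The plan is to mirror the proof of Theorem \ref{thm:GENERIC-CN-decoupled}, specializing the abstract identity to the CHNS operators, and to rely on three fundamental ingredients: the telescoping identity $2a^{n+\frac{1}{2}}(a^{n+1}-a^n)=(a^{n+1})^2-(a^n)^2$; the skew-symmetry $b(\bv,\bu,\bu)=0$ for any $\bv$ with $\nabla\cdot\bv=0$ and $\bv\cdot\bn=0$; and integration by parts with the boundary conditions $\bu^{n+1}=0$, $\nabla\mu^{n+1}\cdot\bn=0$, $\nabla\phi^{n+1}\cdot\bn=0$ on $\partial\Omega$. Averaging the divergence-free constraint at $t_n$ and $t_{n+1}$ also gives $\nabla\cdot\bu^{n+\frac{1}{2}}=0$, which will be used to eliminate the pressure.

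First I would test the momentum equation against $\delta t\,\bu^{n+\frac{1}{2}}$. The time derivative telescopes to $\tfrac{\rho}{2}(\|\bu^{n+1}\|^2-\|\bu^n\|^2)$, the pressure term vanishes thanks to $\nabla\cdot\bu^{n+\frac{1}{2}}=0$, and the viscous term contributes $-\eta\delta t\,\|\nabla\bu^{n+\frac{1}{2}}\|^2$. The two reversible pieces leave behind $\delta t\,s^{n+\frac{1}{2}}e^{t_{n+1/2}/T}\bigl[(\rho B(\overline{\bu}^{n+\frac{1}{2}},\overline{\bu}^{n+\frac{1}{2}}),\bu^{n+\frac{1}{2}})+(\overline{\phi}^{n+\frac{1}{2}}\nabla\overline{\mu}^{n+\frac{1}{2}},\bu^{n+\frac{1}{2}})\bigr]$, to be handled later. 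Second, I would test the phase-field equation against $\delta t\,\mu^{n+\frac{1}{2}}$: the right-hand side yields $-M\delta t\,\|\nabla\mu^{n+\frac{1}{2}}\|^2$, while the convective piece leaves $\delta t\,s^{n+\frac{1}{2}}e^{t_{n+1/2}/T}(\nabla\cdot(\overline{\bu}^{n+\frac{1}{2}}\overline{\phi}^{n+\frac{1}{2}}),\mu^{n+\frac{1}{2}})$.

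Third, to convert $(\phi^{n+1}-\phi^n,\mu^{n+\frac{1}{2}})$ into energy differences I substitute the chemical-potential definition. Integration by parts against $-\varepsilon^2\Delta\phi^{n+\frac{1}{2}}$ produces $\tfrac{\varepsilon^2}{2}(\|\nabla\phi^{n+1}\|^2-\|\nabla\phi^n\|^2)$; against $\gamma_0\phi^{n+\frac{1}{2}}$ it gives $\tfrac{\gamma_0}{2}(\|\phi^{n+1}\|^2-\|\phi^n\|^2)$; and the nonlinear coupling $(\phi^{n+1}-\phi^n,g(\overline{\phi}^{n+\frac{1}{2}})q^{n+\frac{1}{2}})$ becomes, using the $q$-update, $(q^{n+1}-q^n,q^{n+\frac{1}{2}})=\tfrac{1}{2}((q^{n+1})^2-(q^n)^2)$. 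Fourth, I multiply the $s$-equation by $\delta t\,s^{n+\frac{1}{2}}$ to obtain $\tfrac{1}{2}((s^{n+1})^2-(s^n)^2)=-\tfrac{\delta t}{T}(s^{n+\frac{1}{2}})^2$ plus exactly the three reversible coupling integrals produced above, but with the opposite sign.

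Summing the four tested equations, the three reversible terms carried by $s^{n+\frac{1}{2}}e^{t_{n+1/2}/T}$ cancel identically, leaving the stated energy identity. The main obstacle is purely the bookkeeping of those coupling terms: one must verify that the trilinear form from $B$ pairs with $\bu^{n+\frac{1}{2}}$ exactly as it appears inside the $s$-equation, and that the pairs $(\overline{\phi}^{n+\frac{1}{2}}\nabla\overline{\mu}^{n+\frac{1}{2}},\bu^{n+\frac{1}{2}})$ and $(\mu^{n+\frac{1}{2}},\nabla\cdot(\overline{\bu}^{n+\frac{1}{2}}\overline{\phi}^{n+\frac{1}{2}}))$ appear with the sign patterns dictated by the RID reformulation. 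This is precisely where the skew-symmetric design of $\cG_a$ and the auxiliary equation for $s$ earn their keep, and once the cancellation is confirmed, no inequality is needed, so the identity, rather than just an inequality, follows.
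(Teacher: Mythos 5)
Your proposal is correct and follows essentially the route the paper intends: the paper's own proof simply states that Scheme \ref{scheme:CHNS} is a direct instance of Scheme \ref{scheme:GENERIC-CN-decoupled} and omits the details, while you carry out exactly that Crank--Nicolson energy argument in concrete form (testing against $\delta t\,\bu^{n+\frac{1}{2}}$, $\delta t\,\mu^{n+\frac{1}{2}}$, and $\delta t\,s^{n+\frac{1}{2}}$, converting $(\phi^{n+1}-\phi^n,\mu^{n+\frac{1}{2}})$ to energy differences via the $q$-update, and cancelling the reversible couplings against the $s$-equation). The only cosmetic remark is that the identity $b(\bv,\bu,\bu)=0$ you list as an ingredient is not actually needed here, since the trilinear term is cancelled by the $s$-equation rather than by skew-symmetry.
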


\begin{proof}
Notice the Scheme \ref{scheme:CHNS} is a direct application of Scheme \ref{scheme:GENERIC-CN-decoupled}. The proof is similar to the one for Theorem  \ref{thm:GENERIC-CN-decoupled}. We omit the details.
\end{proof}

From this specific example, we can observe that the EQ-RID idea can decouple the hydrodynamic variables $(\bu, p)$ from the state variable $\phi$.
We emphasize that the scheme \ref{scheme:CHNS} can be efficiently solved since the solution procedure has decoupled the velocity field $\bu$ and the phase-field variable $\phi$. It can be easily observed that the scheme \ref{scheme:CHNS} only includes solving smaller linear problems in each step.
Specifically, it is equivalent to the following decoupled scheme.

\begin{scheme}[Practice Implementation of Scheme \ref{scheme:CHNS}] \label{scheme:CHNS-steps}
After we calculate the previous solutions $(\bu^n, \phi^n, q^n, s^n)$ and $(\bu^{n-1}, \phi^{n-1}, q^{n-1}, s^{n-1})$, we can update $(\bu^{n+1}, \phi^{n+1}, q^{n+1}, s^{n+1})$ via the following time-marching scheme
\begin{itemize}
\item Step 1, denote $\bu^{n+\frac{1}{2}} = \bu_1^{n+\frac{1}{2}} + s^{n+\frac{1}{2}} \bu_2^{n+\frac{1}{2}}$, and $\phi^{n+\frac{1}{2}} = \phi_1^{n+\frac{1}{2}} + s^{n+\frac{1}{2}} \phi_2^{n+\frac{1}{2}}$.
\item Step 2.1, get $\phi_1^{n+\frac{1}{2}}$ by solving  the equation below
\begin{subequations} \label{eq:step-CHNS-phi1}
\begin{align}
& \frac{2}{\delta t}( \phi_1^{n+\frac{1}{2}} - \phi^n) = M \Delta \mu_1^{n+\frac{1}{2}}, \\
& \mu_1^{n+\frac{1}{2}} =  - \varepsilon^2 \phi_1^{n+\frac{1}{2}} + q^n g(\overline{\phi}^{n+\frac{1}{2}}) + g^2(\overline{\phi}^{n+\frac{1}{2}}) (\phi_1^{n+\frac{1}{2}} - \phi^n), \\
& \nabla \mu_1^{n+\frac{1}{2}} \cdot \bn = 0, \quad \nabla \phi_1^{n+\frac{1}{2}} \cdot \bn =0, \mbox{ on } \partial \Omega.
\end{align}
\end{subequations}  
\item Step 2.2, get $\phi_2^{n+\frac{1}{2}}$ by solving  the equation below
\begin{subequations} \label{eq:step-CHNS-phi2}
\begin{align}
& \frac{2}{\delta t}\phi_2^{n+\frac{1}{2}}  + e^{\frac{t_{n+1/2}}{T}} \nabla \cdot (\overline{\bu}^{n+\half} \overline{\phi}^{n+\half}) = M \Delta \mu_2^{n+\frac{1}{2}} , \\
& \mu_2^{n+\frac{1}{2}} =  - \varepsilon^2\Delta  \phi_2^{n+\frac{1}{2}} + g^2(\overline{\phi}^{n+\frac{1}{2}})\phi_2^{n+\frac{1}{2}}, \\
& \nabla \mu_2^{n+\frac{1}{2}} \cdot \bn = 0, \quad \nabla \phi_2^{n+\frac{1}{2}} \cdot \bn =0, \mbox{ on } \partial \Omega.
\end{align}
\end{subequations}
\item Step 2.3, get $\bu_1^{n+\frac{1}{2}}$ by solving
\begin{subequations}
\begin{align}
& \frac{\rho}{\delta t}( \bu_1^{n+\frac{1}{2}} - \bu^n)  = -\nabla p + \eta \Delta \bu_1^{n+\half}, \\
& \nabla \cdot \bu_1^{n+\half} =0, \\
& \bu_1^{n+\half}  = 0, \mbox{ on } \partial \Omega.
\end{align}
\end{subequations}
\item Step 2.4, get $\bu_2^{n+\frac{1}{2}}$ by solving
\begin{subequations}
\begin{align}
& \frac{\rho}{\delta t} \bu_2^{n+\frac{1}{2}}  + e^{\frac{t_{n+1/2}}{T}}  B(\overline{\bu}^{n+\half} , \overline{\bu}^{n+\half})   = -\nabla p + \eta \Delta \bu_2^{n+\half} + e^{\frac{t_{n+1/2}}{T}}  \overline{\phi}^{n+\half} \nabla \overline{\mu}^{n+\half}, \\
& \nabla \cdot  \bu_2^{n+\half} =0, \\
& \bu_2^{n+\half} = 0, \mbox{ on } \partial \Omega.
\end{align}
\end{subequations}

\item Step 2.5, get $s^{n+\frac{1}{2}}$ by solving the following linear algebra equation
\begin{multline}
\frac{s^{n+\half} - s^n}{2\delta t}  = - \frac{1}{T} s^{n+\frac{1}{2}} + e^{ \frac{ t_{n+1/2}}{T} } \Big[ \Big( \bu_1^{n+\frac{1}{2}} +s^{n+\frac{1}{2}} \bu_2^{n+\frac{1}{2}},   \rho B(\overline{\bu}^{n+\frac{1}{2}}, \overline{\bu}^{n+\frac{1}{2}})  + \overline{\phi}^{n+\frac{1}{2}} \nabla \overline{\mu}^{n+\frac{1}{2}}\Big)  \\
+\Big( \mu_1^{n+\frac{1}{2}} + s^{n+\half} \mu_2^{n+\half},  \nabla \cdot (\overline{\bu}^{n+\frac{1}{2}} \overline{\phi}^{n+\frac{1}{2}}) \Big) \Big].
\end{multline}

\item Step 3, With the information in Step 2, we can obtain the solution through 
\begin{subequations}
\begin{align}
& \bu^{n+1} = 2 (\bu_1^{n+\frac{1}{2}}+ s^{n+\frac{1}{2}} \bu_2^{n+\frac{1}{2}} ) - \bu^n, \\
& \phi^{n+1} = 2 (\phi_1^{n+\frac{1}{2}}+ s^{n+\frac{1}{2}} \phi_2^{n+\frac{1}{2}} ) - \phi^n,\\
& q^{n+1} = q^n + g(\overline{\phi}^{n+\frac{1}{2}}) (\phi^{n+1}  - \phi^n), \\
& s^{n+1} = 2s^{n+\frac{1}{2}} - s^n.
\end{align}
\end{subequations}

\end{itemize}
\end{scheme}

 Furthermore, we can decouple the velocity field $\bu$ and pressure $p$ by embracing the velocity-projection technique in \cite{Han&WangJCP2015}. This leads to the fully decoupled scheme as below.

\begin{scheme} \label{scheme:CHNS-fully}
Given $(\bu^n, p^n, \phi^n, q^n, s^n)$ and $(\bu^{n-1}, p^{n-1}, \phi^{n-1}, q^{n-1}, s^{n-1})$, we can update the solution at current time $(\bu^{n+1}, p^{n+1}, \phi^{n+1}, q^{n+1}, s^{n+1})$ via the following time-marching scheme
\begin{subequations} \label{eq:scheme-CHNS-fully}
\begin{align}
&\rho \frac{ \hat{\bu}^{n+1} - \bu^n}{\delta t} + s^{n+\frac{1}{2}} e^{\frac{t_{n+1/2}}{T}}  \rho B(\overline{\bu}^{n+\frac{1}{2}}, \overline{\bu}^{n+\frac{1}{2}})) = -\nabla p^n + \eta \Delta \hat{\bu}^{n+\frac{1}{2}} -  s^{n+\frac{1}{2}} e^{\frac{t_{n+1/2}}{T}} \overline{\phi}^{n+\frac{1}{2}} \nabla \overline{\mu}^{n+\frac{1}{2}}, \\
& \frac{\bu^{n+1} - \hat{\bu}^{n+1}}{\delta t} = - \frac{1}{2} \nabla (p^{n+1} - p^n), \\
&\nabla \cdot \bu^{n+1} = 0, \\
&\frac{\phi^{n+1} - \phi^n}{\delta t} + s^{n+\frac{1}{2}} e^{\frac{t_{n+1/2}}{T}}  \nabla \cdot (\overline{\bu}^{n+\frac{1}{2}} \overline{\phi}^{n+\frac{1}{2}}) = M \Delta \mu^{n+\frac{1}{2}}, \\
&\mu^{n+\frac{1}{2}} = -\varepsilon^2 \Delta \phi^{n+\frac{1}{2}} + \gamma_0 \phi^{n+\frac{1}{2}} + q^{n+\frac{1}{2}} g(\overline{\phi}^{n+\frac{1}{2}}),  \\
&\frac{q^{n+1} - q^n}{\delta t} = g(\overline{\phi}^{n+\frac{1}{2}}) \frac{\phi^{n+1} - \phi^n}{\delta t}, \\
&\nonumber \frac{s^{n+1} - s^n}{\delta t}  = - \frac{1}{T} s^{n+\frac{1}{2}} + e^{ \frac{ t_{n+1/2}}{T} } \Big[ \Big( \hat{\bu}^{n+\frac{1}{2}},   \rho B(\overline{\bu}^{n+\frac{1}{2}}, \overline{\bu}^{n+\frac{1}{2}})  + \overline{\phi}^{n+\frac{1}{2}} \nabla \overline{\mu}^{n+\frac{1}{2}}\Big)  \\
& \qquad +\Big( \mu^{n+\frac{1}{2}},  \nabla \cdot (\overline{\bu}^{n+\frac{1}{2}} \overline{\phi}^{n+\frac{1}{2}}) \Big) \Big], \\
& \nabla \mu^{n+1} \cdot \bn = 0, \quad \nabla \phi^{n+1} \cdot \bn = 0, \quad   \hat{\bu}^{n+1} =0, \quad \nabla p^{n+1} \cdot \bn =0, \quad \mbox{ on } \partial  \Omega.
\end{align}
\end{subequations}
\end{scheme}

Similarly, we can have the following energy stability theorem.
\begin{thm}
The scheme \ref{scheme:CHNS-fully} is unconditionally energy stable, in the sense that
\begin{multline}
\cE(\bu^{n+1}, p^{n+1}, \phi^{n+1}, q^{n+1}, s^{n+1}) - \cE(\bu^{n}, p^n, \phi^{n}, q^{n}, s^{n}) \\
 = -\delta t  \Big[ M \|\nabla  \mu^{n+\frac{1}{2}} \|^2  + \eta \| \nabla \hat{\bu}^{n+\frac{1}{2}} \|^2  +\frac{1}{T} (s^{n+\frac{1}{2}})^2 \Big],
\end{multline}
where the modified free energy is defined as
\beq
\cE(\bu, \phi, q, s) = \frac{\rho}{2} \| \bu\|^2 + \frac{\delta t^2}{8\rho} \| \nabla p\|^2 + \frac{\varepsilon^2}{2} \| \nabla \phi \|^2 + (\frac{\gamma_0}{2} \phi^2  +\frac{1}{2}q^2, 1) + \frac{1}{2}s^2 - A_0, \quad A_0 = \frac{1}{4} r_0^2 + \frac{r_0}{2}.
\eeq
\end{thm}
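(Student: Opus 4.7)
The plan is to mirror the proof of Theorem \ref{thm:GENERIC-CN-decoupled} (and the already-analyzed Scheme \ref{scheme:CHNS}) essentially verbatim on the coupled variables $(\phi,\mu,q,s)$, treating the intermediate velocity $\hat{\bu}^{n+1}$ as the ``genuine'' velocity for the purposes of the inner-product tests, and then use the pressure-projection step separately to convert $\|\hat{\bu}^{n+1}\|^2$ into $\|\bu^{n+1}\|^2$ up to a pressure correction. Concretely, I would test the momentum equation against $\delta t\,\hat{\bu}^{n+\frac12}$, the phase-field equation against $\delta t\,\mu^{n+\frac12}$, the chemical potential identity against $(\phi^{n+1}-\phi^n)$, the auxiliary $q$-equation against $\delta t\,q^{n+\frac12}$, and the $s$-equation against $\delta t\,s^{n+\frac12}$, exploiting the standard identities $(a^{n+1}-a^n, a^{n+\frac12})=\tfrac12(\|a^{n+1}\|^2-\|a^n\|^2)$ and $(-\Delta\phi^{n+\frac12},\phi^{n+1}-\phi^n)=\tfrac12(\|\nabla\phi^{n+1}\|^2-\|\nabla\phi^n\|^2)$.

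After summing, the three coupling terms that persist are the advection $s^{n+\frac12}e^{t_{n+1/2}/T}(\rho B(\overline{\bu}^{n+\frac12},\overline{\bu}^{n+\frac12}),\hat{\bu}^{n+\frac12})$, the elastic stress $s^{n+\frac12}e^{t_{n+1/2}/T}(\overline{\phi}^{n+\frac12}\nabla\overline{\mu}^{n+\frac12},\hat{\bu}^{n+\frac12})$, and the transport $s^{n+\frac12}e^{t_{n+1/2}/T}(\mu^{n+\frac12},\nabla\!\cdot\!(\overline{\bu}^{n+\frac12}\overline{\phi}^{n+\frac12}))$. By design, the $s$-equation contains exactly these three terms with the opposite sign and with $\hat{\bu}^{n+\frac12}$ as the test function (not $\bu^{n+\frac12}$), so they cancel identically, leaving on the right-hand side only $-\delta t\,M\|\nabla\mu^{n+\frac12}\|^2 - \delta t\,\eta\|\nabla\hat{\bu}^{n+\frac12}\|^2 - \tfrac{\delta t}{T}(s^{n+\frac12})^2$ and the single residual pressure term $-\delta t(\nabla p^n,\hat{\bu}^{n+\frac12})$.

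The main and only new obstacle is to dispose of this residual pressure term together with $\tfrac{\rho}{2}(\|\hat{\bu}^{n+1}\|^2-\|\bu^n\|^2)$ and to produce precisely the pressure ``ghost energy'' $\tfrac{\delta t^2}{8\rho}\|\nabla p\|^2$ in the modified free energy. For this I would use the projection equation $\bu^{n+1}-\hat{\bu}^{n+1}=-\tfrac{\delta t}{2\rho}\nabla(p^{n+1}-p^n)$ in two independent ways. First, squaring it together with $\nabla\!\cdot\!\bu^{n+1}=0$ and $\bu^{n+1}\cdot\bn=0$ (which follows from $\hat{\bu}^{n+1}|_{\partial\Omega}=0$ and $\nabla p^{n+1}\cdot\bn=0$) yields $\tfrac{\rho}{2}\|\hat{\bu}^{n+1}\|^2=\tfrac{\rho}{2}\|\bu^{n+1}\|^2+\tfrac{\delta t^2}{8\rho}\|\nabla(p^{n+1}-p^n)\|^2$. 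Second, using $(\nabla p^n,\bu^n)=0=(\nabla p^n,\bu^{n+1})$ via the same divergence-free/boundary properties, the pressure term collapses to $-\delta t(\nabla p^n,\hat{\bu}^{n+\frac12})=-\tfrac{\delta t^2}{4\rho}(\nabla p^n,\nabla(p^{n+1}-p^n))$, which by the polarization identity $2(a,b-a)=\|b\|^2-\|a\|^2-\|b-a\|^2$ equals $-\tfrac{\delta t^2}{8\rho}[\|\nabla p^{n+1}\|^2-\|\nabla p^n\|^2]+\tfrac{\delta t^2}{8\rho}\|\nabla(p^{n+1}-p^n)\|^2$.

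Assembling the pieces, the two copies of $\tfrac{\delta t^2}{8\rho}\|\nabla(p^{n+1}-p^n)\|^2$ cancel (this delicate sign matching is what makes the scheme work and is the step most worth double-checking), and what remains on the left is the claimed difference of the modified energy $\cE(\bu^{n+1},p^{n+1},\phi^{n+1},q^{n+1},s^{n+1})-\cE(\bu^n,p^n,\phi^n,q^n,s^n)$, with the right-hand side being exactly the stated nonpositive dissipation. I expect the cancellation of the coupling terms in Paragraph 2 to be purely bookkeeping given the structure of the RID reformulation, and the genuine work to be in the pressure-projection accounting in Paragraph 3, where the boundary conditions for $\hat{\bu}^{n+1}$, $\bu^{n+1}$ and $p^{n+1}$ must all be used carefully to kill the boundary integrals that would otherwise obstruct the polarization identity.
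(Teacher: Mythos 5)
Your proposal is correct and is precisely the argument the paper intends: it omits the details, saying only that the proof mimics Theorem \ref{thm:GENERIC-CN-decoupled} with the pressure modification handled as in the cited Han--Wang reference, and your testing strategy plus the two uses of the projection equation (to trade $\|\hat{\bu}^{n+1}\|^2$ for $\|\bu^{n+1}\|^2$ and to polarize $-\delta t(\nabla p^n,\hat{\bu}^{n+\frac12})$, with the $\|\nabla(p^{n+1}-p^n)\|^2$ terms cancelling) is exactly that standard accounting. Note only that the projection step in Scheme \ref{scheme:CHNS-fully} is printed without the factor $\rho$, so your version $\bu^{n+1}-\hat{\bu}^{n+1}=-\tfrac{\delta t}{2\rho}\nabla(p^{n+1}-p^n)$ silently corrects a typo that is needed for the stated ghost energy $\tfrac{\delta t^2}{8\rho}\|\nabla p\|^2$ to come out.
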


\begin{proof}
The proof is similar to the one in Theorem \ref{scheme:GENERIC-CN-decoupled}. The extra modification of the pressure term in the free energy can be found in \cite{Han&WangJCP2015}. We omit the details.
\end{proof}

Similarly, the Scheme \ref{scheme:CHNS-fully} can be implemented as below.

\begin{scheme}[Practice Implementation of Scheme \ref{scheme:CHNS-fully}] \label{scheme:CHNS-steps-fullly}
Given the solutions in previous time steps $(\bu^n, p^n, \phi^n, q^n, s^n)$ and $(\bu^{n-1}, p^{n-1}, \phi^{n-1}, q^{n-1}, s^{n-1})$, we update the solution at current time $(\bu^{n+1}, p^{n+1}, \phi^{n+1}, q^{n+1}, s^{n+1})$ via the following time-marching scheme
\begin{itemize}
\item Step 1, denote $\hat{\bu}^{n+\frac{1}{2}} = \bu_1^{n+\frac{1}{2}} + s^{n+\frac{1}{2}} \bu_2^{n+\frac{1}{2}}$, and $\phi^{n+\frac{1}{2}} = \phi_1^{n+\frac{1}{2}} + s^{n+\frac{1}{2}} \phi_2^{n+\frac{1}{2}}$.
\item Step 2.1, get $\phi_1^{n+\frac{1}{2}}$ by solving  the equation \eqref{eq:step-CHNS-phi1}.
\item Step 2.2, get $\phi_2^{n+\frac{1}{2}}$ by solving  the equation  \eqref{eq:step-CHNS-phi2}.
\item Step 2.3, get $\bu_1^{n+\frac{1}{2}}$ by solving
\begin{subequations}
\begin{align}
& \frac{\rho}{\delta t}( \bu_1^{n+\frac{1}{2}} - \bu^n)  = -\nabla p^n + \eta \Delta \bu_1^{n+\half}, \\
& \bu_1^{n+\half} = 0, \mbox{ on } \partial \Omega.
\end{align}
\end{subequations}
\item Step 2.4, get $\bu_2^{n+\frac{1}{2}}$ by solving
\begin{subequations}
\begin{align}
& \frac{\rho}{\delta t} \bu_2^{n+\frac{1}{2}}  + e^{\frac{t_{n+1/2}}{T}}  B(\overline{\bu}^{n+\half} , \overline{\bu}^{n+\half})   = -\nabla p^n + \eta \Delta \bu_2^{n+\half} + e^{\frac{t_{n+1/2}}{T}}  \overline{\phi}^{n+\half} \nabla \overline{\mu}^{n+\half}, \\
& \bu_2^{n+\half} = 0, \mbox{ on } \partial \Omega.
\end{align}
\end{subequations}

\item Step 2.5, get $s^{n+\frac{1}{2}}$ by solving the following linear algebra equation
\begin{multline}
\frac{s^{n+\half} - s^n}{2\delta t}  = - \frac{1}{T} s^{n+\frac{1}{2}} + e^{ \frac{ t_{n+1/2}}{T} } \Big[ \Big( \bu_1^{n+\frac{1}{2}} +s^{n+\frac{1}{2}} \bu_2^{n+\frac{1}{2}},   \rho B(\overline{\bu}^{n+\frac{1}{2}}, \overline{\bu}^{n+\frac{1}{2}})  + \overline{\phi}^{n+\frac{1}{2}} \nabla \overline{\mu}^{n+\frac{1}{2}}\Big)  \\
+\Big( \mu_1^{n+\frac{1}{2}} + s^{n+\half} \mu_2^{n+\half},  \nabla \cdot (\overline{\bu}^{n+\frac{1}{2}} \overline{\phi}^{n+\frac{1}{2}}) \Big) \Big].
\end{multline}

\item Step 3, with the information in Step 2, we update the solution through
\begin{subequations}
\begin{align}
& \phi^{n+1} = 2 (\phi_1^{n+\frac{1}{2}}+ s^{n+\frac{1}{2}} \phi_2^{n+\frac{1}{2}} ) - \phi^n,\\
& q^{n+1} = q^n + g(\overline{\phi}^{n+\frac{1}{2}}) (\phi^{n+1}  - \phi^n), \\
& s^{n+1} = 2s^{n+\frac{1}{2}} - s^n.
\end{align}
\end{subequations}
With $\hat{\bu}^{n+1} = 2 (\bu_1^{n+\frac{1}{2}}+ s^{n+\frac{1}{2}} \bu_2^{n+\frac{1}{2}} ) - \bu^n$,
we get $(\bu^{n+1}, p^{n+1})$ via
\begin{subequations}
\begin{align}
& \frac{1}{\delta t}( \bu^{n+1} - \hat{\bu}^{n+1}) = -\frac{1}{2} \nabla (\bp^{n+1} - \bp^n), \\
& \nabla \cdot \bu^{n+1} = 0.
\end{align}
\end{subequations}
\end{itemize}
\end{scheme}

\subsection{Numerical algorithms for the hydrodynamic Ericksen-Leslie model}
Next, we apply the general numerical framework on the  the nematic liquid crystal model in \eqref{eq:LC-Ericksen-Lesile}. Specifically, we introduce the energy quadratization (EQ) notations:
\beq
\cL_0 = \begin{bmatrix}
\rho & 0 \\0 & - K \Delta  + \gamma_0
\end{bmatrix}, \quad 
q = \frac{1}{\sqrt{2} \varepsilon} (|\bp|^2 - 1- \varepsilon^2 \gamma_0), \quad g(\bp): =\frac{\partial q}{\partial \bp} = \frac{\sqrt{2}}{\varepsilon} \bp,
\eeq 
where $\gamma_0$ is a regularization parameter \cite{Chen&Zhao&Yang2018}.
According to \eqref{eq:evolution-general-EQ}, we have the Onsager-Q form
\begin{subequations}\label{eq:LC-Ericksen-Lesile-EQ} 
\begin{align}
& \frac{\rho}{\delta t} (\partial_t \bu +B(\bu, \bu)) = -\nabla p + \eta \Delta \bu + \nabla \cdot ( \frac{1}{2}(\bp \bh -\bh \bp) - \frac{a}{2}(\bp \bh +\bh \bp) ) - \bh \nabla \bp, \\
& \nabla \cdot \bu = 0, \\
& \partial_t \bp + \bu \cdot \nabla \bp -\bW\cdot \bp - a\bD \cdot \bp = M \bh, \\
& \bh = K \Delta \bp - \gamma_0 \bp - q g(\bp), \\
& \partial_t q = g(\bp) \cdot \partial_t \bp,
\end{align}
\end{subequations}
with consistent initial condition for $q$. Then, based on the generic form of reversible-irreversible dynamics (RID) idea  in \eqref{eq:gradient-flow-decoupled}, we can reformulate \eqref{eq:LC-Ericksen-Lesile-EQ} into
\begin{subequations}  \label{eq:LC-Ericksen-Lesile-decoupled} 
\begin{align}
& \rho (\partial_t \bu + s e^{\frac{t}{T}} B(\bu, \bu)) = -\nabla p + \eta \Delta \bu + s e^{\frac{t}{T}} \Big[ \nabla \cdot ( \frac{1}{2}(\bp \bh -\bh \bp) - \frac{a}{2}(\bp \bh +\bh \bp) ) - \bh \nabla \bp \Big], \\
& \nabla \cdot \bu = 0, \\
& \partial_t \bp + s e^{\frac{t}{T}}  \Big[ \bu \cdot \nabla \bp - \bW\cdot \bp - a\bD \cdot \bp \Big]= M \bh, \\
& \bh = K \Delta \bp - \gamma_0 \bp - q g(\bp), \\
& \partial_t q = g(\bp) \cdot \partial_t \bp, \\
& \nonumber \partial_t s = -\frac{1}{T} s +  e^{\frac{t}{T}} \Big[  \Big(\bu, \rho B(\bu, \bu)- \nabla \cdot ( \frac{1}{2}(\bp \bh -\bh \bp) - \frac{a}{2}(\bp \bh +\bh \bp) ) + \bh \nabla \bp\Big)  \\
&\qquad + \Big( -\bh, \bu \cdot \nabla \bp - \bW\cdot \bp - a \bD\cdot \bp\Big)  \Big].
\end{align}
\end{subequations}

With the reformulation in \eqref{eq:LC-Ericksen-Lesile-decoupled}, we can plug it into the general decoupled Scheme \ref{scheme:GENERIC-CN-decoupled}. Hence, the scheme for the Ericksen-Leslie model of nematic liquid crystal flow is given as
\begin{scheme} \label{scheme:LC-Ericksen-Lesile}
After we calculate the solutions $(\bu^n, \bp^n, q^n, s^n)$ and $(\bu^{n-1}, \bp^{n-1}, q^{n-1}, s^{n-1})$, we can update $(\bu^{n+1}, \bp^{n+1}, q^{n+1}, s^{n+1})$ via the following scheme
\begin{subequations} \label{eq:scheme-LC-Ericksen-Lesile}
\begin{align}
& \nonumber  \frac{\rho}{\delta t} (\bu^{n+1} - \bu^n) + s^{n+\frac{1}{2}} e^{ \frac{t_{n+1/2}}{T}} B(\overline{\bu}^{n+1/2}, \overline{\bu}^{n+1/2}) = -\nabla p + \eta \Delta \bu^{n+\frac{1}{2}} + s^{n+\frac{1}{2}} e^{ \frac{t_{n+1/2}}{T}}   \\
& \qquad  \Big[  \nabla \cdot ( \frac{1}{2}(\overline{\bp}^{n+\frac{1}{2}}  \overline{\bh}^{n+\frac{1}{2}} -\overline{\bh}^{n+\frac{1}{2}} \overline{\bp}^{n+\frac{1}{2}} ) - \frac{a}{2}(\overline{\bp}^{n+\frac{1}{2}}  \overline{\bh}^{n+\frac{1}{2}} +\overline{\bh}^{n+\frac{1}{2}} \overline{\bp}^{n+\frac{1}{2}} ) - \overline{\bh}^{n+\frac{1}{2}}  \nabla \overline{\bp}^{n+\frac{1}{2}}  \Big], \\
& \nabla \cdot \bu^{n+\frac{1}{2}} = 0, \\
& \frac{1}{\delta t} ( \bp^{n+1} - \bp^n) + s^{n+\frac{1}{2}} e^{ \frac{t_{n+1/2}}{T}} \Big[ \overline{\bu}^{n+\frac{1}{2}} \cdot \nabla \overline{\bp}^{n+\frac{1}{2}} - \overline{\bW}^{n+\frac{1}{2}} \cdot \overline{\bp}^{n+\frac{1}{2}} - a\overline{\bD}^{n+\frac{1}{2}} \cdot \overline{\bp}^{n+\frac{1}{2}} \Big] = M \bh^{n+\frac{1}{2}}, \\
& \bh^{n+\frac{1}{2}} =  K \Delta \bp^{n+\frac{1}{2}} - \gamma_0 \bp^{n+\frac{1}{2}}- q^{n+\frac{1}{2}} g(\overline{\bp}^{n+\frac{1}{2}}), \\
& \frac{1}{\delta t}( q^{n+1} - q^n) = g(\overline{\bp}^{n+\frac{1}{2}}) \cdot \frac{1}{\delta t}(\bp^{n+1} - \bp^n), \\
& \nonumber \frac{1}{\delta t} (s^{n+1} - s^n) = - \frac{1}{T} s^{n+\frac{1}{2}} + e^{ \frac{t_{n+1/2}}{T}} \Big[    \Big( \bu^{n+\frac{1}{2}}, \rho B(\overline{\bu}^{n+\frac{1}{2}}, \overline{\bu}^{n+\frac{1}{2}}) \Big) \\
& \nonumber  - \Big( \bh^{n+\frac{1}{2}}, \overline{\bu}^{n+\frac{1}{2}} \cdot \nabla \overline{\bp}^{n+\frac{1}{2}} -(\overline{\bW}^{n+\half} + a \overline{\bD}^{n+\half}) \cdot \overline{\bp}^{n+\half}\Big) \\
&+ \Big( \bu^{n+\frac{1}{2}}, -\nabla \cdot ( \frac{1}{2}(\overline{\bp}^{n+\frac{1}{2}}  \overline{\bh}^{n+\frac{1}{2}} -\overline{\bh}^{n+\frac{1}{2}} \overline{\bp}^{n+\frac{1}{2}} ) - \frac{a}{2}(\overline{\bp}^{n+\frac{1}{2}}  \overline{\bh}^{n+\frac{1}{2}} +\overline{\bh}^{n+\frac{1}{2}} \overline{\bp}^{n+\frac{1}{2}} )) + \overline{\bh}^{n+\frac{1}{2}}  \nabla \overline{\bp}^{n+\frac{1}{2}} \Big)  \Big], \\
& \bu^{n+1} =0, \quad  \nabla \bp^{n+1} \cdot \bn =\mathbf{0}, \quad \mbox{ on } \partial \Omega.
\end{align}
\end{subequations}
\end{scheme}

With the proposed Scheme \ref{scheme:LC-Ericksen-Lesile} for the Ericksen-Leslie model, we have the following energy stable property.

\begin{thm}
The scheme \ref{scheme:LC-Ericksen-Lesile} is unconditionally energy stable, in the sense that
\beq
\cE(\bu^{n+1}, \bp^{n+1}, q^{n+1}, s^{n+1}) - \cE(\bu^{n}, \bp^{n}, q^{n}, s^{n}) = -\delta t  \Big[ M \| \bh^{n+\frac{1}{2}} \|^2  + \eta \| \nabla \bu^{n+\frac{1}{2}} \|^2 +\frac{1}{T} (s^{n+\frac{1}{2}})^2 \Big],
\eeq 
where the modified free energy is defined as
\beq
\cE(\bu, \bp, q, s) = \frac{\rho}{2} \| \bu\|^2 + \frac{K}{2} \| \nabla \bp\|^2 + (\frac{\gamma_0}{2} \bp^2  +\frac{1}{2}q^2, 1) + \frac{1}{2}s^2 - A_0, \quad A_0 = \frac{\varepsilon^2 \gamma_0^2}{4} + \frac{\gamma_0}{2}.
\eeq
\end{thm}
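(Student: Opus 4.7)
The plan is to follow the testing strategy already used in Theorem \ref{thm:GENERIC-CN-decoupled} and its CHNS specialization: I would pair each evolution equation of Scheme \ref{scheme:LC-Ericksen-Lesile} with a carefully chosen test quantity so that the left-hand sides telescope into $\cE(\bu,\bp,q,s)$, the dissipative right-hand sides deliver the three claimed $L^2$ squares, and the reversible coupling terms placed in the momentum and director equations are annihilated by the matching contributions written into the $s$-equation. Concretely, I take the $L^2$ inner products of (i) the momentum equation with $\delta t\,\bu^{n+\frac{1}{2}}$, (ii) the director equation with $-\delta t\,\bh^{n+\frac{1}{2}}$, (iii) the constitutive relation for $\bh^{n+\frac{1}{2}}$ with $-(\bp^{n+1}-\bp^n)$, (iv) the $q$-update with $q^{n+\frac{1}{2}}$, and (v) the $s$-update with $\delta t\,s^{n+\frac{1}{2}}$.

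Test (i) yields the kinetic telescope $\tfrac{\rho}{2}(\|\bu^{n+1}\|^2-\|\bu^n\|^2)$ together with $\delta t\,\eta\|\nabla\bu^{n+\frac{1}{2}}\|^2$, once an integration by parts is carried out using $\bu^{n+\frac{1}{2}}|_{\partial\Omega}=0$, while the pressure drops out via $\nabla\cdot\bu^{n+\frac{1}{2}}=0$. Combining (ii)-(iv), with a single integration by parts on $K\Delta\bp^{n+\frac{1}{2}}$ (permitted by $\nabla\bp^{n+1}\cdot\bn=\mathbf{0}$) and the EQ identity $(q^{n+\frac{1}{2}}g(\overline{\bp}^{n+\frac{1}{2}}),\bp^{n+1}-\bp^n) = \tfrac{1}{2}((q^{n+1})^2-(q^n)^2)$, produces the elastic telescope $\tfrac{K}{2}(\|\nabla\bp^{n+1}\|^2-\|\nabla\bp^n\|^2) + \tfrac{\gamma_0}{2}(\|\bp^{n+1}\|^2-\|\bp^n\|^2)$, the auxiliary telescope $\tfrac{1}{2}((q^{n+1})^2-(q^n)^2)$, and the dissipation $\delta t\,M\|\bh^{n+\frac{1}{2}}\|^2$. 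Test (v) supplies $\tfrac{1}{2}((s^{n+1})^2-(s^n)^2) + \tfrac{\delta t}{T}(s^{n+\frac{1}{2}})^2$.

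The only real work is tracking the reversible couplings and verifying their pairwise cancellation. Test (i) leaves, after moving the convective term to the right-hand side, the two contributions coming from $\rho B(\overline{\bu}^{n+\frac{1}{2}},\overline{\bu}^{n+\frac{1}{2}})$ and from the Leslie/Ericksen stress divergence $\nabla\cdot(\tfrac{1}{2}(\overline{\bp}\overline{\bh}-\overline{\bh}\overline{\bp}) - \tfrac{a}{2}(\overline{\bp}\overline{\bh}+\overline{\bh}\overline{\bp})) - \overline{\bh}\nabla\overline{\bp}$; test (ii) leaves the coupling coming from $\overline{\bu}\cdot\nabla\overline{\bp} - \overline{\bW}\cdot\overline{\bp} - a\overline{\bD}\cdot\overline{\bp}$; and test (v), by the very construction of the $s$-equation, delivers exactly the negatives of each of these three contributions, each carrying the common prefactor $\delta t\,s^{n+\frac{1}{2}} e^{t_{n+1/2}/T}$. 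Consequently every reversible term cancels term-by-term with its twin from the $s$-update, and no further structural manipulation (in particular, no integration by parts is needed to convert the Leslie stress divergence into $\bW$- and $\bD$-type expressions) is required -- this is the key advantage of the RID reformulation. Summing (i)-(v), what remains is precisely
\[
\cE(\bu^{n+1},\bp^{n+1},q^{n+1},s^{n+1}) - \cE(\bu^n,\bp^n,q^n,s^n) = -\delta t\bigl[M\|\bh^{n+\frac{1}{2}}\|^2 + \eta\|\nabla\bu^{n+\frac{1}{2}}\|^2 + \tfrac{1}{T}(s^{n+\frac{1}{2}})^2\bigr],
\]
which is the stated identity. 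The main obstacle is therefore just the careful bookkeeping of signs and of the extrapolated versus midpoint quantities, which is manageable because Scheme \ref{scheme:LC-Ericksen-Lesile} and the $s$-equation use identical reversible expressions; following the preceding CHNS proof, I would present the five tested relations and their sum succinctly and omit the routine quadratic identities.
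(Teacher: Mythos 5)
Your proof is correct and is precisely the computation the paper intends: the paper omits the details, stating only that the result is a direct specialization of Theorem \ref{thm:GENERIC-CN-decoupled}, and your five test pairings---together with the key observation that each reversible term cancels verbatim against its copy in the $s$-equation, so no discrete analogue of the continuous skew-symmetry identity is required---are exactly that specialization. (You also implicitly correct the paper's apparent typo of a missing $\rho$ in front of $B(\overline{\bu}^{n+1/2},\overline{\bu}^{n+1/2})$ in the momentum equation of Scheme \ref{scheme:LC-Ericksen-Lesile}, which is needed for the cancellation against the $s$-equation to be exact.)
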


\begin{proof}
Given this is a direct application of the general scheme \ref{scheme:GENERIC-CN-decoupled}, the proof is similar to the proof for Theorem \ref{thm:GENERIC-CN-decoupled}. We thus omit the details.
\end{proof}

Furthermore, by utilizing the velocity projection idea,  we can decouple the velocity field and pressure. Eventually, we come up with the fully decoupled scheme as follows.

\begin{scheme} \label{scheme:LC-Ericksen-Lesile-fully}
Given $(\bu^n, \bp^n, q^n, s^n)$ and $(\bu^{n-1}, \bp^{n-1}, q^{n-1}, s^{n-1})$, we can update $(\bu^{n+1}, \bp^{n+1}, q^{n+1}, s^{n+1})$ via the following scheme
\begin{subequations} \label{eq:scheme-LC-Ericksen-Lesile-fully}
\begin{align}
& \nonumber  \frac{\rho}{\delta t} (\hat{\bu}^{n+1} - \bu^n) + s^{n+\frac{1}{2}} e^{ \frac{t_{n+1/2}}{T}} B(\overline{\bu}^{n+1/2}, \overline{\bu}^{n+1/2}) = -\nabla p + \eta \Delta \hat{\bu}^{n+\frac{1}{2}} + s^{n+\frac{1}{2}} e^{ \frac{t_{n+1/2}}{T}}   \\
& \qquad  \Big[  \nabla \cdot ( \frac{1}{2}(\overline{\bp}^{n+\frac{1}{2}}  \overline{\bh}^{n+\frac{1}{2}} -\overline{\bh}^{n+\frac{1}{2}} \overline{\bp}^{n+\frac{1}{2}} ) - \frac{a}{2}(\overline{\bp}^{n+\frac{1}{2}}  \overline{\bh}^{n+\frac{1}{2}} +\overline{\bh}^{n+\frac{1}{2}} \overline{\bp}^{n+\frac{1}{2}} ) - \overline{\bh}^{n+\frac{1}{2}}  \nabla \overline{\bp}^{n+\frac{1}{2}}  \Big], \\
& \rho \frac{\bu^{n+1} - \hat{\bu}^{n+1}}{\delta t} = - \frac{1}{2} \nabla (p^{n+1} - p^n), \\
& \nabla \cdot \bu^{n+\frac{1}{2}} = 0, \\
& \frac{1}{\delta t} ( \bp^{n+1} - \bp^n) + s^{n+\frac{1}{2}} e^{ \frac{t_{n+1/2}}{T}} \Big[ \overline{\bu}^{n+\frac{1}{2}} \cdot \nabla \overline{\bp}^{n+\frac{1}{2}} - \overline{\bW}^{n+\frac{1}{2}} \cdot \overline{\bp}^{n+\frac{1}{2}} - a\overline{\bD}^{n+\frac{1}{2}} \cdot \overline{\bp}^{n+\frac{1}{2}} \Big] = M \bh^{n+\frac{1}{2}}, \\
& \bh^{n+\frac{1}{2}} =  K \Delta \bp^{n+\frac{1}{2}} - \gamma_0 \bp^{n+\frac{1}{2}} - q^{n+\frac{1}{2}} g(\overline{\bp}^{n+\frac{1}{2}}), \\
& \frac{1}{\delta t}( q^{n+1} - q^n) = g(\overline{\bp}^{n+\frac{1}{2}}) \cdot \frac{1}{\delta t}(\bp^{n+1} - \bp^n), \\
& \nonumber \frac{1}{\delta t} (s^{n+1} - s^n) = - \frac{1}{T} s^{n+\frac{1}{2}} + e^{ \frac{t_{n+1/2}}{T}} \Big[    \Big( \bu^{n+\frac{1}{2}}, \rho B(\overline{\bu}^{n+\frac{1}{2}}, \overline{\bu}^{n+\frac{1}{2}}) \Big) \\
& \nonumber  - \Big( \bh^{n+\frac{1}{2}}, \overline{\bu}^{n+\frac{1}{2}} \cdot \nabla \overline{\bp}^{n+\frac{1}{2}} -(\overline{\bW}^{n+\half} + a \overline{\bD}^{n+\half}) \cdot \overline{\bp}^{n+\half}\Big) + \Big(\hat{\bu}^{n+\frac{1}{2}},\\
&  -\nabla \cdot ( \frac{1}{2}(\overline{\bp}^{n+\frac{1}{2}}  \overline{\bh}^{n+\frac{1}{2}} -\overline{\bh}^{n+\frac{1}{2}} \overline{\bp}^{n+\frac{1}{2}} ) - \frac{a}{2}(\overline{\bp}^{n+\frac{1}{2}}  \overline{\bh}^{n+\frac{1}{2}} +\overline{\bh}^{n+\frac{1}{2}} \overline{\bp}^{n+\frac{1}{2}} )) + \overline{\bh}^{n+\frac{1}{2}}  \nabla \overline{\bp}^{n+\frac{1}{2}} \Big)  \Big], \\
&\hat{\bu}^{n+1} =0, \quad  \nabla p^{n+1} \cdot \bn =0, \quad \nabla \bp^{n+1} \cdot \bn =\mathbf{0}, \quad \mbox{ on } \partial \Omega.
\end{align}
\end{subequations}
\end{scheme}

\begin{thm}
The scheme \ref{scheme:LC-Ericksen-Lesile-fully} is unconditionally energy stable, in the sense that
\begin{multline}
\cE(\bu^{n+1}, p^{n+1}, \bp^{n+1}, q^{n+1}, s^{n+1}) - \cE(\bu^{n}, p^{n+1}, \bp^{n}, q^{n}, s^{n}) \\
=  -\delta t  \Big[ M \| \bh^{n+\frac{1}{2}} \|^2  + \eta \| \nabla \bu^{n+\frac{1}{2}} \|^2 +\frac{1}{T} (s^{n+\frac{1}{2}})^2 \Big],
\end{multline}
where the modified free energy is defined as
\beq
\cE(\bu, p, \bp, q, s) = \frac{\rho}{2} \| \bu\|^2 + \frac{\delta t^2}{8\rho}\| \nabla p\|^2+ \frac{K}{2} \| \nabla \bp\|^2 + (\frac{\gamma_0}{2} \bp^2  +\frac{1}{2}q^2, 1) + \frac{1}{2}s^2 - A_0, \quad A_0 = \frac{\varepsilon^2 \gamma_0^2}{4} + \frac{\gamma_0}{2}.
\eeq
\end{thm}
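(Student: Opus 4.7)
The plan is to mirror, step by step, the proofs of Theorem \ref{thm:GENERIC-CN-decoupled} and the energy stability theorem for Scheme \ref{scheme:CHNS-fully}, since Scheme \ref{scheme:LC-Ericksen-Lesile-fully} is literally the instantiation of the general CN framework on the Onsager-Q-reformulated Ericksen-Leslie system \eqref{eq:LC-Ericksen-Lesile-decoupled}, augmented only by the standard pressure-projection splitting of \cite{Han&WangJCP2015}. So I would treat the momentum, pressure-correction, director, molecular-field, auxiliary-variable, and scalar equations in turn, test each against the natural CN test function, sum everything up, and watch all the reversible cross terms collapse through the $s$-equation.

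First I would test the momentum equation against $\delta t\,\hat{\bu}^{n+1/2}$ to get $\frac{\rho}{2}\|\hat{\bu}^{n+1}\|^2 - \frac{\rho}{2}(\bu^n,\hat{\bu}^{n+1})$-type terms on the left, plus $\delta t\eta\|\nabla\hat{\bu}^{n+1/2}\|^2$ on the right, and the three reversible couplings carrying the prefactor $s^{n+1/2}e^{t_{n+1/2}/T}$. Next I would test the director equation against $-\delta t\,\bh^{n+1/2}$ (the sign reflects $\bh=-\delta\cE/\delta\bp$); expanding $\bh^{n+1/2}$ via the constitutive law $\bh^{n+1/2}=K\Delta\bp^{n+1/2}-\gamma_0\bp^{n+1/2}-q^{n+1/2}g(\overline{\bp}^{n+1/2})$ and using the EQ identity $\frac{q^{n+1}-q^n}{\delta t}=g(\overline{\bp}^{n+1/2})\cdot\frac{\bp^{n+1}-\bp^n}{\delta t}$ produces the discrete derivatives $\frac{K}{2}(\|\nabla\bp^{n+1}\|^2-\|\nabla\bp^n\|^2)$, $\frac{\gamma_0}{2}(\|\bp^{n+1}\|^2-\|\bp^n\|^2)$ and $\frac{1}{2}(\|q^{n+1}\|^2-\|q^n\|^2)$, plus the dissipation $\delta t M\|\bh^{n+1/2}\|^2$ and the two antisymmetric $\bW,\bD$ couplings. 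Finally, testing the $s$-equation against $\delta t\,s^{n+1/2}$ gives $\frac{1}{2}((s^{n+1})^2-(s^n)^2) = -\frac{\delta t}{T}(s^{n+1/2})^2$ plus exactly the bracket of reversible couplings with the opposite prefactor $s^{n+1/2}e^{t_{n+1/2}/T}$.

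Adding the three identities, the four reversible cross terms cancel in pairs: the nonlinear inertial coupling $(\hat{\bu}^{n+1/2},\rho B(\overline{\bu},\overline{\bu}))$, the stress-divergence term, the $\bh\nabla\bp$ term, and the $\bW,\bD$ rotational-convective terms are precisely those the $s$-equation was engineered to absorb. This is the same cancellation that underlies Theorem \ref{thm:GENERIC-CN-decoupled}; the only difference is the algebraic bookkeeping of two vector fields $\bp,\bh$ rather than the scalar $\phi,\mu$.

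The one piece that is not fully inherited from Scheme \ref{scheme:GENERIC-CN-decoupled} is the pressure projection, and I expect this to be the only real obstacle. I would handle it exactly as in \cite{Han&WangJCP2015}: rewrite the projection step as $\rho\bu^{n+1}+\tfrac{\delta t}{2}\nabla p^{n+1}=\rho\hat{\bu}^{n+1}+\tfrac{\delta t}{2}\nabla p^n$, square both sides, integrate by parts using $\nabla\cdot\bu^{n+1}=0$ and the boundary condition $\nabla p^{n+1}\cdot\bn=0$, to obtain $\rho\|\bu^{n+1}\|^2+\tfrac{\delta t^2}{4\rho}\|\nabla p^{n+1}\|^2 = \rho\|\hat{\bu}^{n+1}\|^2+\tfrac{\delta t^2}{4\rho}\|\nabla p^n\|^2$; halving gives the extra $\tfrac{\delta t^2}{8\rho}(\|\nabla p^{n+1}\|^2-\|\nabla p^n\|^2)$ contribution and also converts $\tfrac{\rho}{2}\|\hat{\bu}^{n+1}\|^2$ into $\tfrac{\rho}{2}\|\bu^{n+1}\|^2$ in the modified energy. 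The $-\nabla p^n$ term in the momentum equation, when tested against $\delta t\,\hat{\bu}^{n+1/2}$, does not immediately vanish (since $\hat{\bu}^{n+1}$ is not divergence-free), but it is exactly absorbed by this same projection identity. Combining all the inequalities yields the stated dissipation law with the advertised modified energy, completing the proof; since every individual step is identical to one already carried out in the paper, omitting the detailed algebra as the authors do is justified.
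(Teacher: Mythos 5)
Your proposal is correct and follows exactly the route the paper intends: the paper omits this proof, stating only that it is similar to the proof of Theorem \ref{thm:GENERIC-CN-decoupled} together with the pressure-term modification of \cite{Han&WangJCP2015}, and your step-by-step testing (momentum against $\delta t\,\hat{\bu}^{n+1/2}$, director against $-\delta t\,\bh^{n+1/2}$ with the EQ identity, the $s$-equation absorbing the reversible cross terms, and the squared projection identity supplying the $\frac{\delta t^2}{8\rho}\|\nabla p\|^2$ correction) is precisely that omitted argument. No gaps.
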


\begin{proof}
 We thus omit the details of the proof since it is similar to before.
\end{proof}

We emphasis that the Schemes \ref{scheme:LC-Ericksen-Lesile} and \ref{scheme:LC-Ericksen-Lesile-fully} can be practically solved following the idea in previous sub-section. To save the space, we only briefly explain how Scheme \ref{scheme:LC-Ericksen-Lesile-fully} can be effectively implemented. Notice the fact
\beq
q^{n+\frac{1}{2}} = q^n  + g(\overline{\bp}^{n+\frac{1}{2}}) \cdot (\bp^{n+\frac{1}{2}} - \bp^n).
\eeq 
Plugging it into the expression for $\bh^{n+\frac{1}{2}}$ provides us
\beq
\bh^{n+\frac{1}{2}} = K \Delta \bp^{n+\frac{1}{2}} - \gamma_0 \bp^{n+\frac{1}{2}} -\Big[q^n  + g(\overline{\bp}^{n+\frac{1}{2}}) \cdot (\bp^{n+\frac{1}{2}} - \bp^n)\Big] g(\overline{\bp}^{n+\frac{1}{2}}).
\eeq 
With this in mind, we can implement Scheme \ref{scheme:LC-Ericksen-Lesile-fully} as follows.

\begin{scheme}[Practical Implementation for Scheme \ref{scheme:LC-Ericksen-Lesile-fully}]   \label{scheme:LC-Ericksen-Lesile-steps-fully}
After calculating the solutions in previous times $(\bu^n, p^n, \bp^n, q^n, s^n)$ and $(\bu^{n-1}, p^{n-1}, \bp^{n-1}, q^{n-1}, s^{n-1})$, we can update the next time step $(\bu^{n+1}, p^{n+1}, \bp^{n+1}, q^{n+1}, s^{n+1})$ via the following time-marching scheme
\begin{itemize}
\item Step 1, denote $\hat{\bu}^{n+\frac{1}{2}} = \bu_1^{n+\frac{1}{2}} + s^{n+\frac{1}{2}} \bu_2^{n+\frac{1}{2}}$, and $\bp^{n+\frac{1}{2}} = \bp_1^{n+\frac{1}{2}} + s^{n+\frac{1}{2}} \bp_2^{n+\frac{1}{2}}$.
\item Step 2.1, get $\bp_1^{n+\frac{1}{2}}$ by solving  the equation below
\begin{subequations}
\begin{align}
& \frac{2}{\delta t}( \bp_1^{n+\frac{1}{2}} - \bp^n) = M \bh_1^{n+\frac{1}{2}}, \\
& \bh_1^{n+\frac{1}{2}} =  K \Delta \bp_1^{n+\frac{1}{2}} - \gamma_0\bp_1^{n+\frac{1}{2}} -  \Big[ q^n +  g(\overline{\bp}^{n+\frac{1}{2}}) \cdot ( \bp_1^{n+\half} - \bp^n)]  g(\overline{\bp}^{n+\frac{1}{2}}), \\
& \nabla \bp_1^{n+\frac{1}{2}} \cdot \bn = \mathbf{0}, \mbox{ on } \partial \Omega.
\end{align}
\end{subequations}
\item Step 2.2, get $\bp_2^{n+\frac{1}{2}}$ by solving  the equation below
\begin{subequations}
\begin{align}
& \frac{2}{\delta t}\bp_2^{n+\frac{1}{2}}  + e^{\frac{t_{n+1/2}}{T}}  \overline{\bu}^{n+\half} \cdot \nabla \overline{\bp}^{n+\half} - e^{\frac{t_{n+1/2}}{T}} (\overline{\bW}^{n+\half} + a\overline{\bD}^{n+\half}) \cdot \overline{\bp}^{n+\half} = M \bh_2^{n+\frac{1}{2}} , \\
& \bh_2^{n+\frac{1}{2}} =  K \Delta \bp_2^{n+\frac{1}{2}} - \gamma_0\bp_2^{n+\frac{1}{2}} - \Big[ g(\overline{\bp}^{n+\frac{1}{2}}) \cdot \bp_2^{n+\frac{1}{2}} \Big] g(\overline{\bp}^{n+\frac{1}{2}}), \\
& \nabla \bp_2^{n+\frac{1}{2}} \cdot \bn = \mathbf{0}, \mbox{ on } \partial \Omega.
\end{align}
\end{subequations}
\item Step 2.3, get $\bu_1^{n+\frac{1}{2}}$ by solving
\begin{subequations}
\begin{align}
& \frac{\rho}{\delta t}( \bu_1^{n+\frac{1}{2}} - \bu^n)  = -\nabla p^n + \eta \Delta \bu_1^{n+\half}, \\
& \bu_1^{n+\half} = 0, \mbox{ on } \partial \Omega.
\end{align}
\end{subequations}
\item Step 2.4, get $\bu_2^{n+\frac{1}{2}}$ by solving
\begin{subequations}
\begin{align}
& \frac{\rho}{\delta t} \bu_2^{n+\frac{1}{2}}  + e^{\frac{t_{n+1/2}}{T}}  B(\overline{\bu}^{n+\half} , \overline{\bu}^{n+\half})   = -\nabla p^n + \eta \Delta \bu_2^{n+\half}  - e^{\frac{t_{n+1/2}}{T}}  \overline{\bh}^{n+\half} \nabla \overline{\bp}^{n+\half}\\
& \nonumber \qquad  + e^{\frac{t_{n+1/2}}{T}} \nabla \cdot \Big[   \frac{1}{2}(\overline{\bp}^{n+\frac{1}{2}}  \overline{\bh}^{n+\frac{1}{2}} -\overline{\bh}^{n+\frac{1}{2}} \overline{\bp}^{n+\frac{1}{2}} ) - \frac{a}{2}(\overline{\bp}^{n+\frac{1}{2}}  \overline{\bh}^{n+\frac{1}{2}} +\overline{\bh}^{n+\frac{1}{2}} \overline{\bp}^{n+\frac{1}{2}} )  \Big], \\
& \bu_2^{n+\half} = 0, \mbox{ on } \partial \Omega.
\end{align}
\end{subequations}

\item Step 2.5, get $s^{n+\frac{1}{2}}$ by solving the following linear algebra equation
\begin{multline}
\frac{s^{n+\half} - s^n}{2\delta t}  = - \frac{1}{T} s^{n+\frac{1}{2}} + e^{ \frac{ t_{n+1/2}}{T} } \Big[ \Big( \bu_1^{n+\frac{1}{2}} +s^{n+\frac{1}{2}} \bu_2^{n+\half}, \quad   \rho B(\overline{\bu}^{n+\frac{1}{2}}, \overline{\bu}^{n+\frac{1}{2}})  + \overline{\bh}^{n+\frac{1}{2}} \nabla \overline{\bp}^{n+\frac{1}{2}}  \\
- \nabla \cdot \Big(  \frac{1}{2}(\overline{\bp}^{n+\frac{1}{2}}  \overline{\bh}^{n+\frac{1}{2}} -\overline{\bh}^{n+\frac{1}{2}} \overline{\bp}^{n+\frac{1}{2}} ) - \frac{a}{2}(\overline{\bp}^{n+\frac{1}{2}}  \overline{\bh}^{n+\frac{1}{2}} +\overline{\bh}^{n+\frac{1}{2}} \overline{\bp}^{n+\frac{1}{2}} )  \Big)  \\
-\Big( \bh_1^{n+\frac{1}{2}} + s^{n+\half} \bh_2^{n+\half}, \quad  \overline{\bu}^{n+\frac{1}{2}} \cdot \nabla \overline{\bp}^{n+\frac{1}{2}} -(\overline{\bW}^{n+\half}+a\overline{\bD}^{n+\half}) \cdot \overline{\bp}^{n+\half} \Big) \Big].
\end{multline}

\item Step 3, With the information in Step 2, we can obtain the solution $(\bp^{n+1}, q^{n+1}, s^{n+1})$ through the following update 
\begin{subequations}
\begin{align}
& \bp^{n+1} = 2 (\bp_1^{n+\frac{1}{2}}+ s^{n+\frac{1}{2}} \bp_2^{n+\frac{1}{2}} ) - \bp^n,\\
& q^{n+1} = q^n + g(\overline{\bp}^{n+\frac{1}{2}}) \cdot (\bp^{n+1}  - \bp^n), \\
& s^{n+1} = 2s^{n+\frac{1}{2}} - s^n.
\end{align}
\end{subequations}
And, bu noticing $\hat{\bu}^{n+1} = 2 (\bu_1^{n+\frac{1}{2}}+ s^{n+\frac{1}{2}} \bu_2^{n+\frac{1}{2}} ) - \bu^n$, we get $(\bu^{n+1}, p^{n+1})$ via 
\begin{subequations}
\begin{align}
& \frac{1}{\delta t}( \bu^{n+1} - \hat{\bu}^{n+1}) = -\frac{1}{2} \nabla (\bp^{n+1} - \bp^n), \\
& \nabla \cdot \bu^{n+1} = 0.
\end{align}
\end{subequations}

\end{itemize}
\end{scheme}

We emphasize that the scheme \ref{scheme:LC-Ericksen-Lesile-steps-fully} is equivalent to the scheme \ref{scheme:LC-Ericksen-Lesile-fully}. And it is a practical implementation. From scheme \ref{scheme:LC-Ericksen-Lesile-steps-fully}, we see that the hydrodynamic variable $\bu$ and the state variable $\bp$ are decoupled so that only a few smaller linear systems shall be solved at each time marching step. This significantly reduces computational costs. In addition, the scheme is easy to implement.

\section{Numerical results} \label{sec:results}
In the rest of this section, we implement the full decoupled schemes, i.e., Scheme \ref{scheme:CHNS-steps-fullly} and Scheme \ref{scheme:LC-Ericksen-Lesile-steps-fully}, since they are most computationally efficient. Then we calculate several benchmark problems to test the accuracy and effectiveness of the proposed numerical schemes. We point out that the velocity-projection-inspired preconditioner \cite{GriffithJCP2009} can be used to effectively solve the coupled system for velocity and pressure field, such as \eqref{eq:step-CHNS-phi1} and \eqref{eq:step-CHNS-phi2}. Thus, the Scheme \ref{scheme:CHNS-steps} and Scheme \ref{scheme:LC-Ericksen-Lesile} can also be effectively solved. 

Notice the proposed general numerical framework focuses on temporal discretization. Structure-preserving spatial discretization, such as the Galerkin-type finite element method that preserves integration by parts and finite difference method that preserves summation by parts, can be utilized to obtain the full discrete schemes. In this paper, we use the structure preserving finite difference method, following our previous work \cite{Gong&Zhao&WangSISC2}. We point out that there is no particular restriction on the convection terms' discretization, given that they are treated explicitly and have no contribution to energy dissipation.  We thus use the WENO-type spatial discretization \cite{ShuC1} for the convection terms.

\subsection{Numerical examples for the Cahn-Hilliard-Navier-Stokes equations}
In this section, we use  Scheme \ref{scheme:CHNS-steps-fullly} to calculate several numerical examples for the Cahn-Hilliard-Navier-Stokes system in \eqref{eq:CHNS}. In particular, to be general, we consider periodic boundary conditions in the $x$-direction and physical boundary conditions in $y$-direction.

First of all, we verify the Scheme \ref{scheme:CHNS-steps-fullly} (or Scheme \ref{scheme:CHNS-fully}) is second-order accuracy in time. We pick the domain $\Omega=[0, 1]^2$, and parameters $\rho=1$, $\eta =1$, $\varepsilon = 10^{-2}$, and $\gamma_0 =0$. Then we fix spatial meshes as $128^2$,, $T=1$, and use various time steps $\delta t_n = 10^{-2} \times \frac{1}{2^k}$, $k=0,1,2,\cdots$. Given the real solution is unknown, we calculate the errors as the difference between the numerical solution and the adjacent numerical solution with finer time step. The $l^2$ errors and $l_\infty$ errors are summarized in Figure \ref{fig:CHNS-mesh-refinement}. We observe that  the proposed numerical scheme has second-order temporal accuracy in deed. 

\begin{figure}
\center
\subfigure[$l_2$ error]{\includegraphics[width=0.475\textwidth]{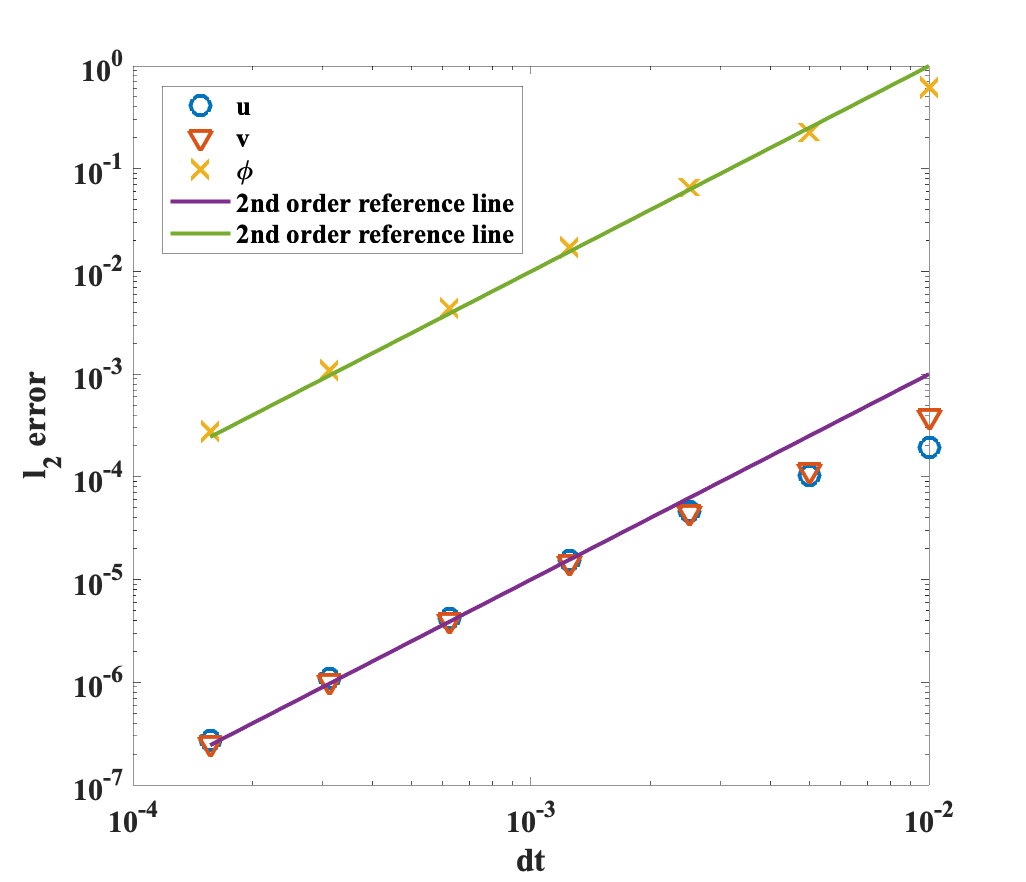}}
\subfigure[$l_\infty$ error]{\includegraphics[width=0.475\textwidth]{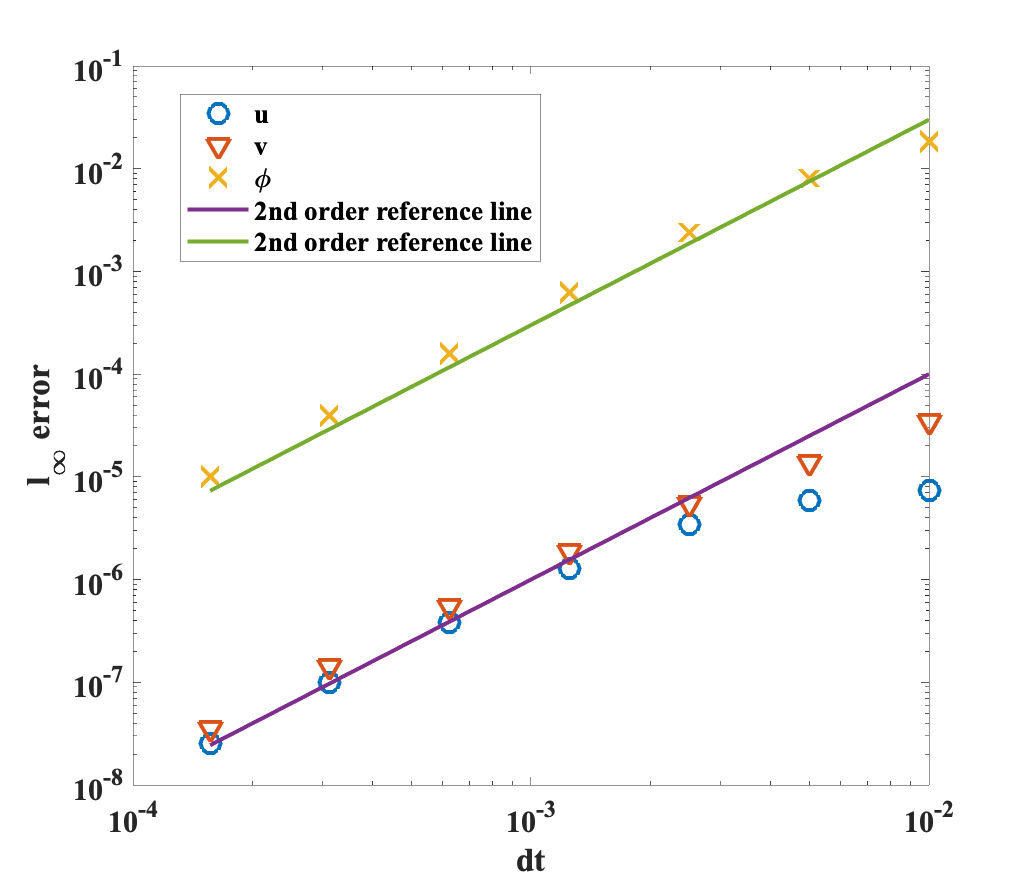}}
\caption{Time step mesh-refinement for Scheme \ref{scheme:CHNS-steps-fullly}. This figure shows that Scheme \ref{scheme:CHNS-steps-fullly} is second-order accurate in both the $l_2$ norm and $l_\infty$ norm.}
\label{fig:CHNS-mesh-refinement}
\end{figure}

Next, we use Scheme \ref{scheme:CHNS-steps-fullly} to conduct several benchmark simulations. In the first simulation, we consider the domain $\Omega=[0, L_x] \times[0, L_y]$ with $L_x=L_y=2$, parameters are chosen as $\rho=1$, $\eta=1$, $\varepsilon=0.01$, $\gamma_0=0$, $T=100$, and choose a random initial condition as
$$
\phi(x, y, t=0) = 0.9 (\frac{y}{L_y} - 0.5)  + 10^{-3} rand(-1,1), \quad (x, y) \in \Omega.
$$
In the implementation we use meshes $256^2$ and time step $\delta t= 0.005$. The evolution of the phase-field variable $\phi$ is summarized in Figure \ref{fig:CHNS-Coarsening}. We observe that spinodal decomposition takes more effect when the volume fraction of two phases is similar, saying in the middle of the domain. Meanwhile,  the nucleation takes more effect when the volume fractions of each phase differ dramatically,. This agrees well with the results in the literature.

\begin{figure}
\center
\subfigure[profiles of $\phi$ at $t=0,0.6,1,2$]{
\includegraphics[width=0.22\textwidth]{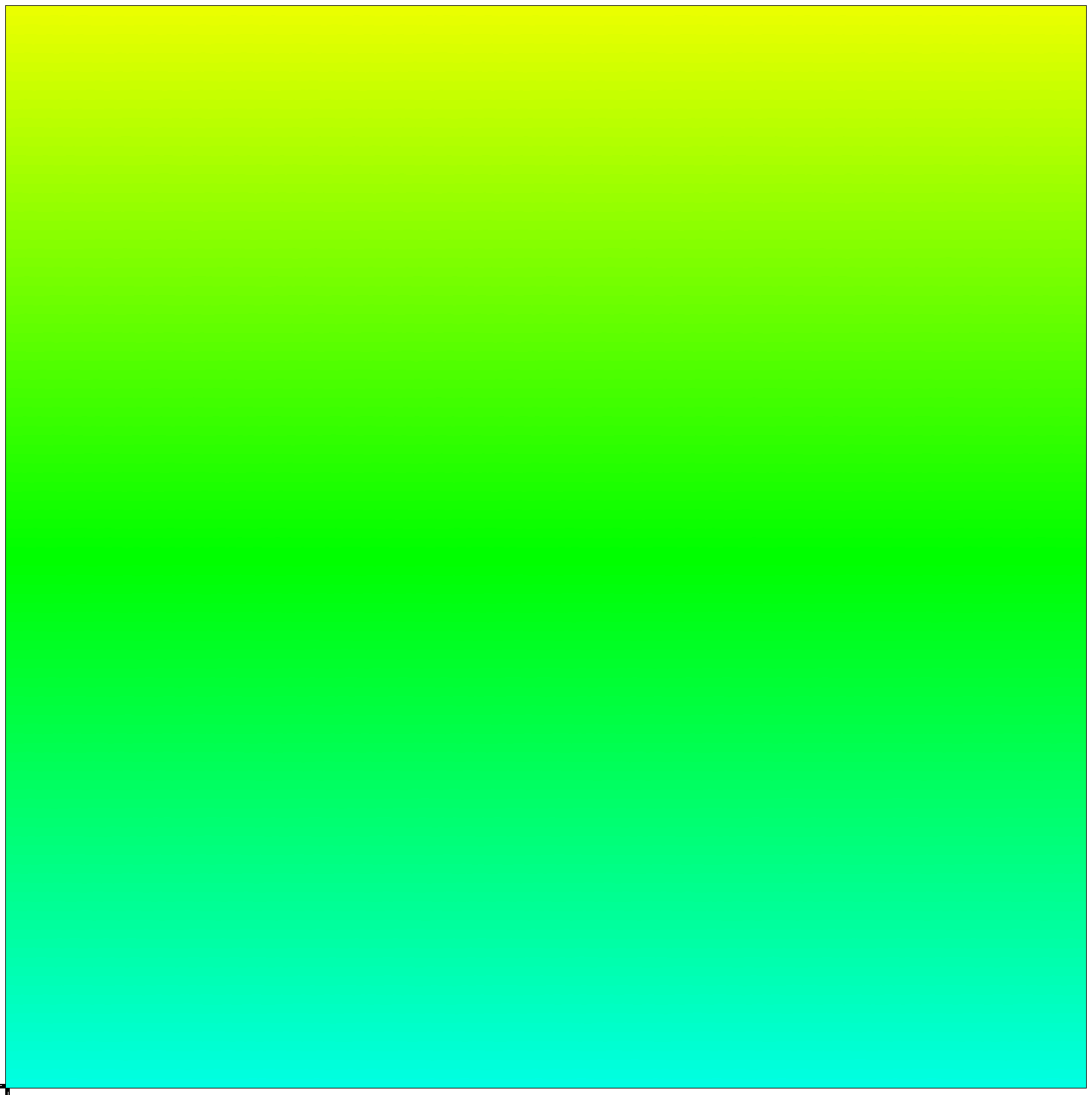}
\includegraphics[width=0.22\textwidth]{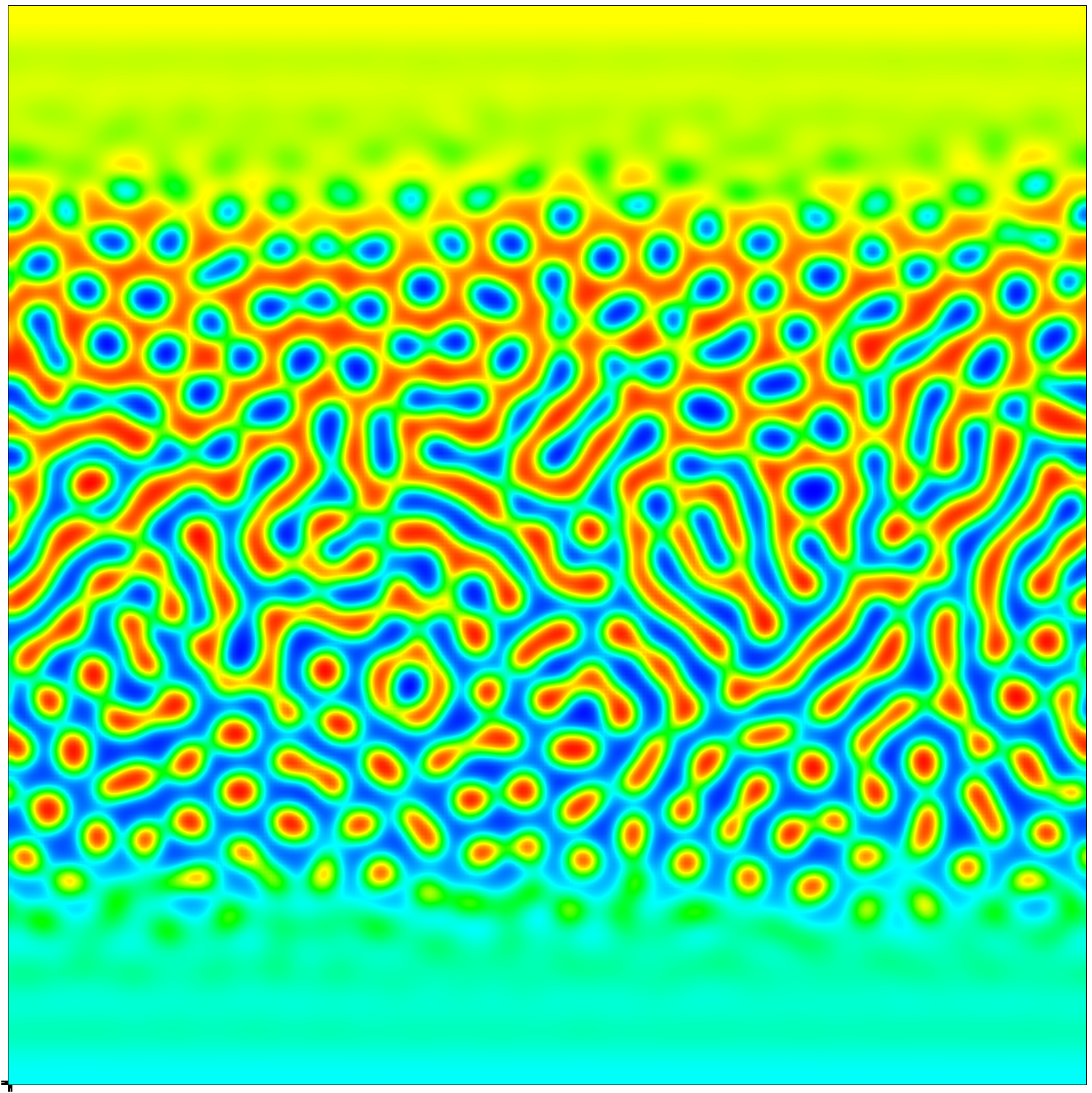}
\includegraphics[width=0.22\textwidth]{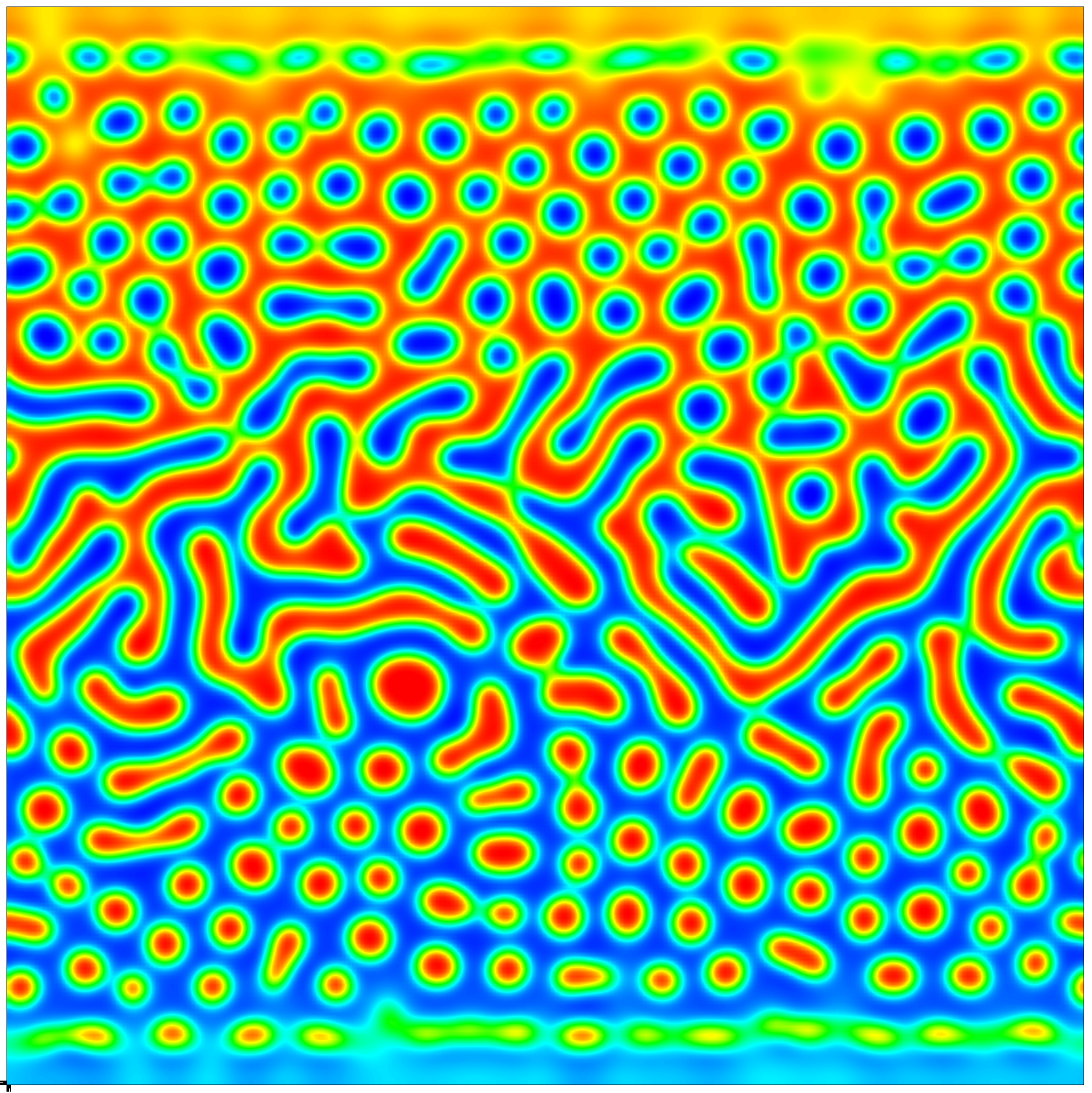}
\includegraphics[width=0.22\textwidth]{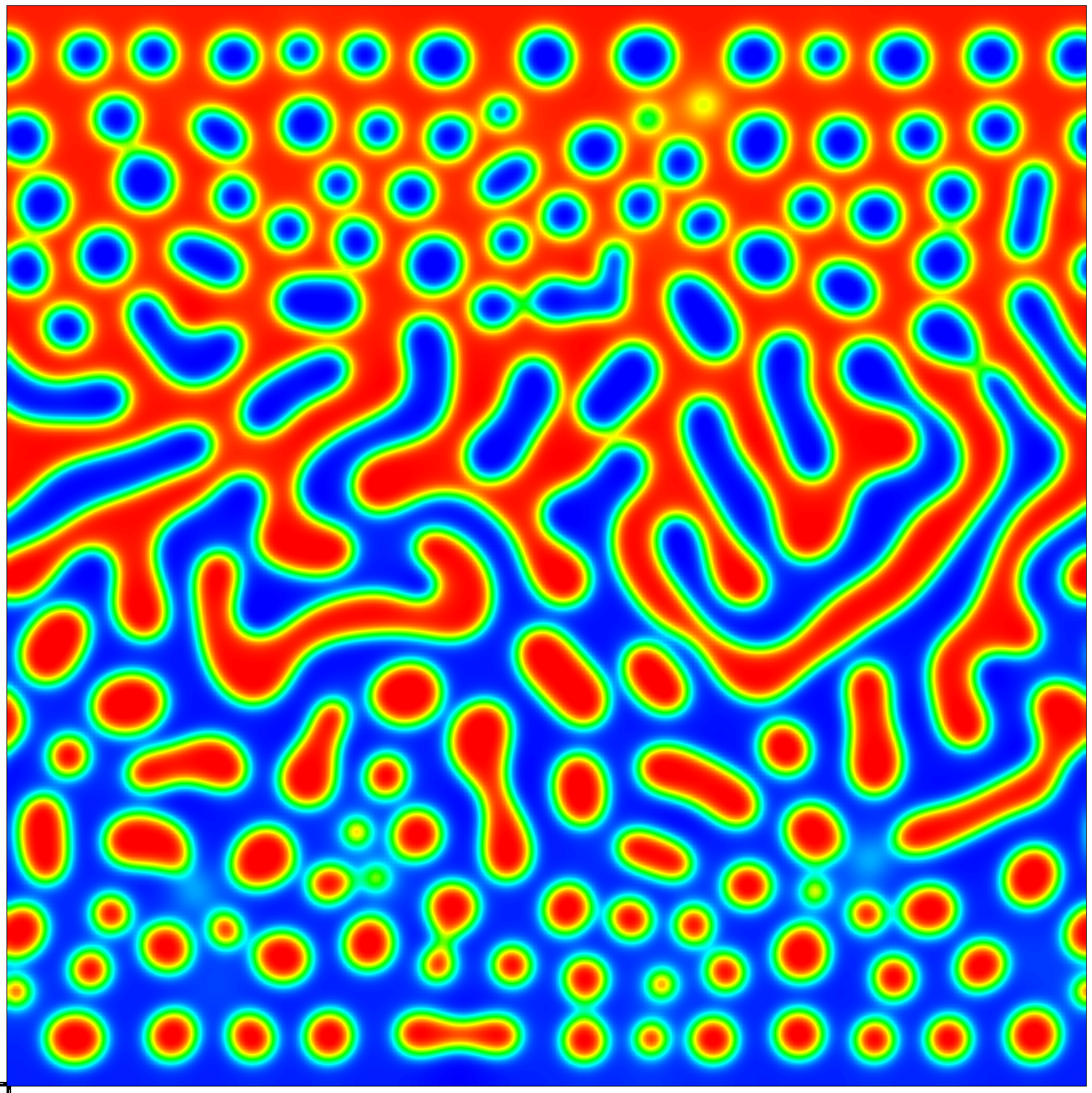}
}

\subfigure[profiles of $\phi$ at $t=5,10,20,100$]{
\includegraphics[width=0.22\textwidth]{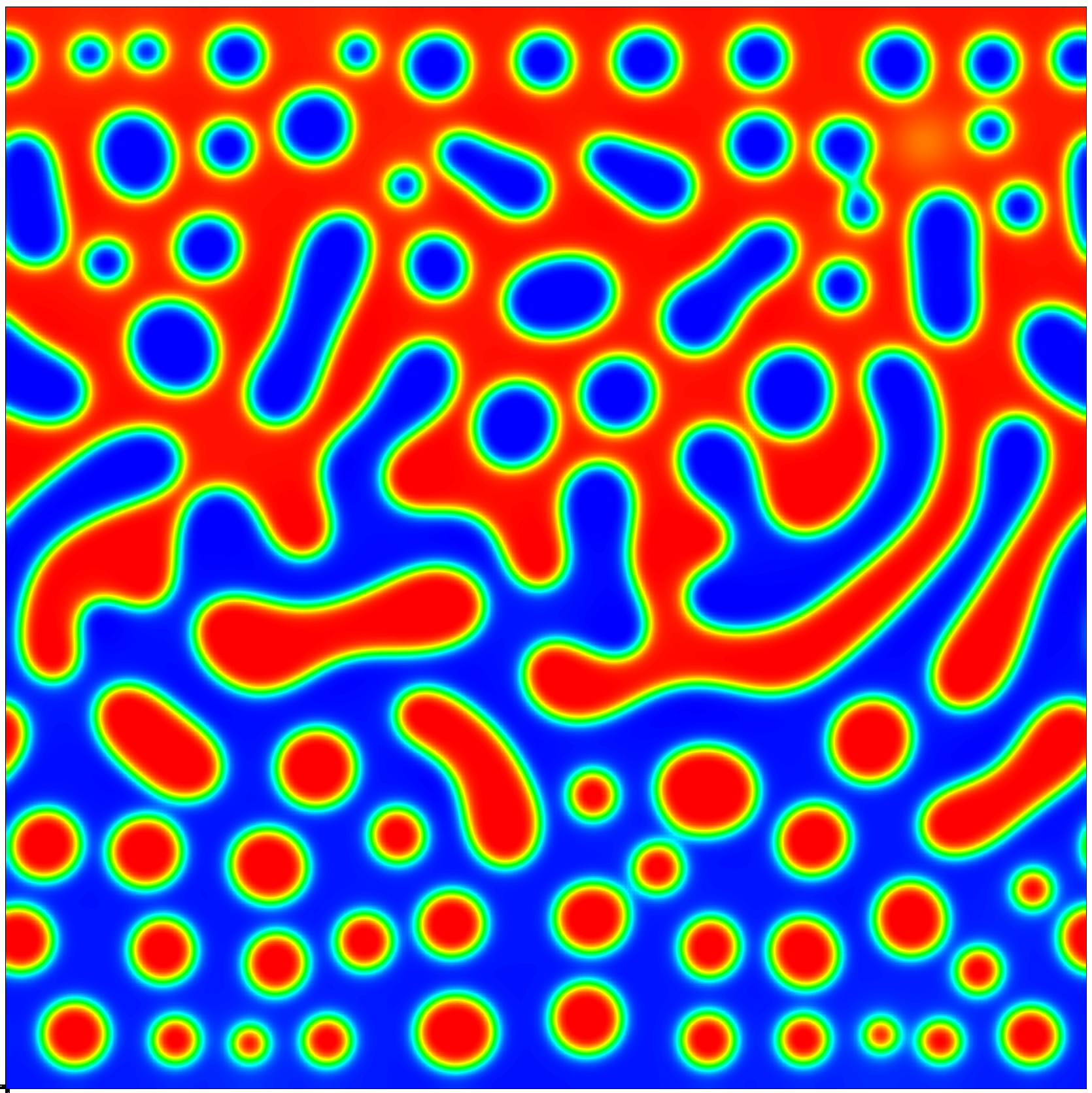}
\includegraphics[width=0.22\textwidth]{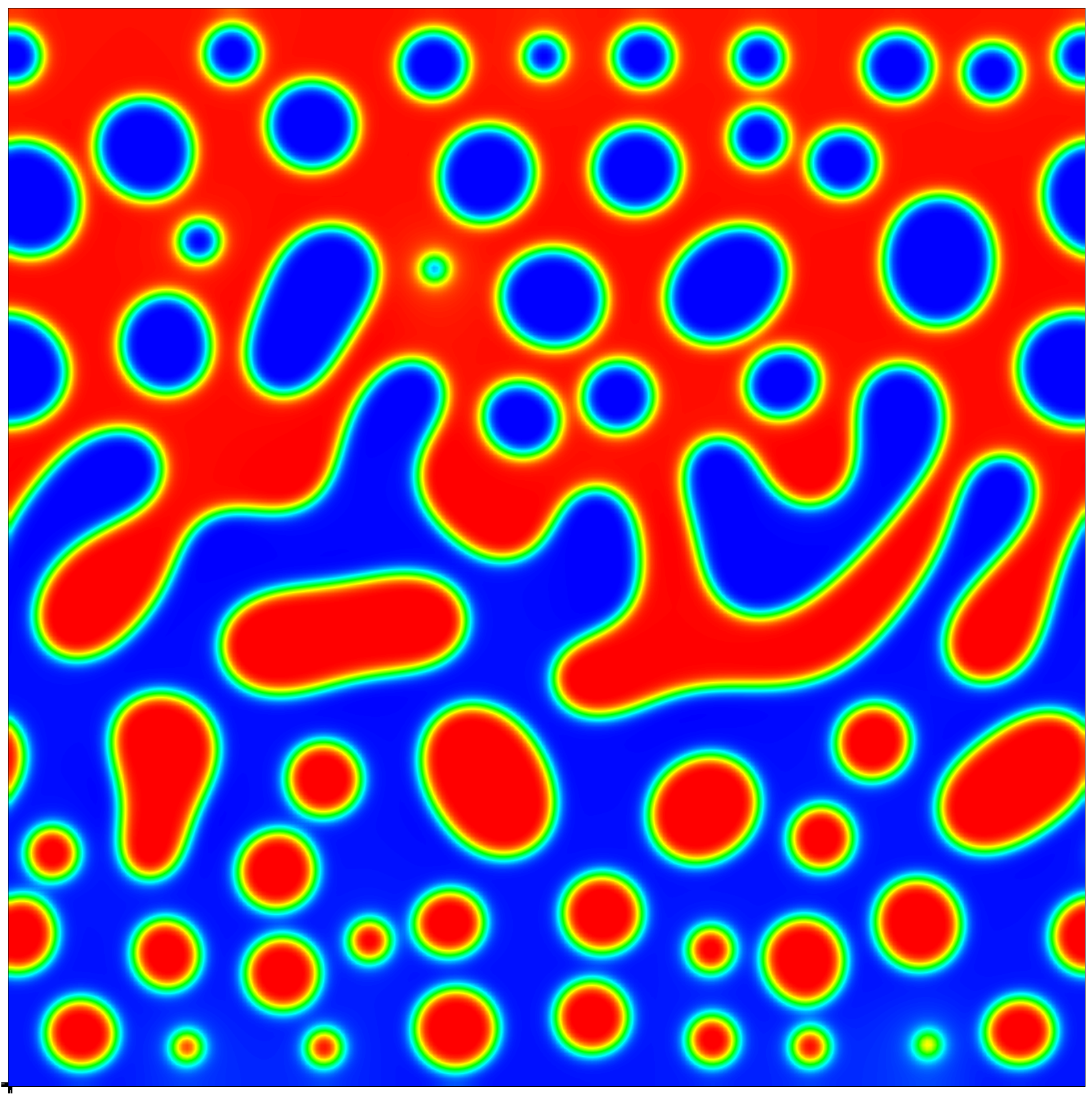}
\includegraphics[width=0.22\textwidth]{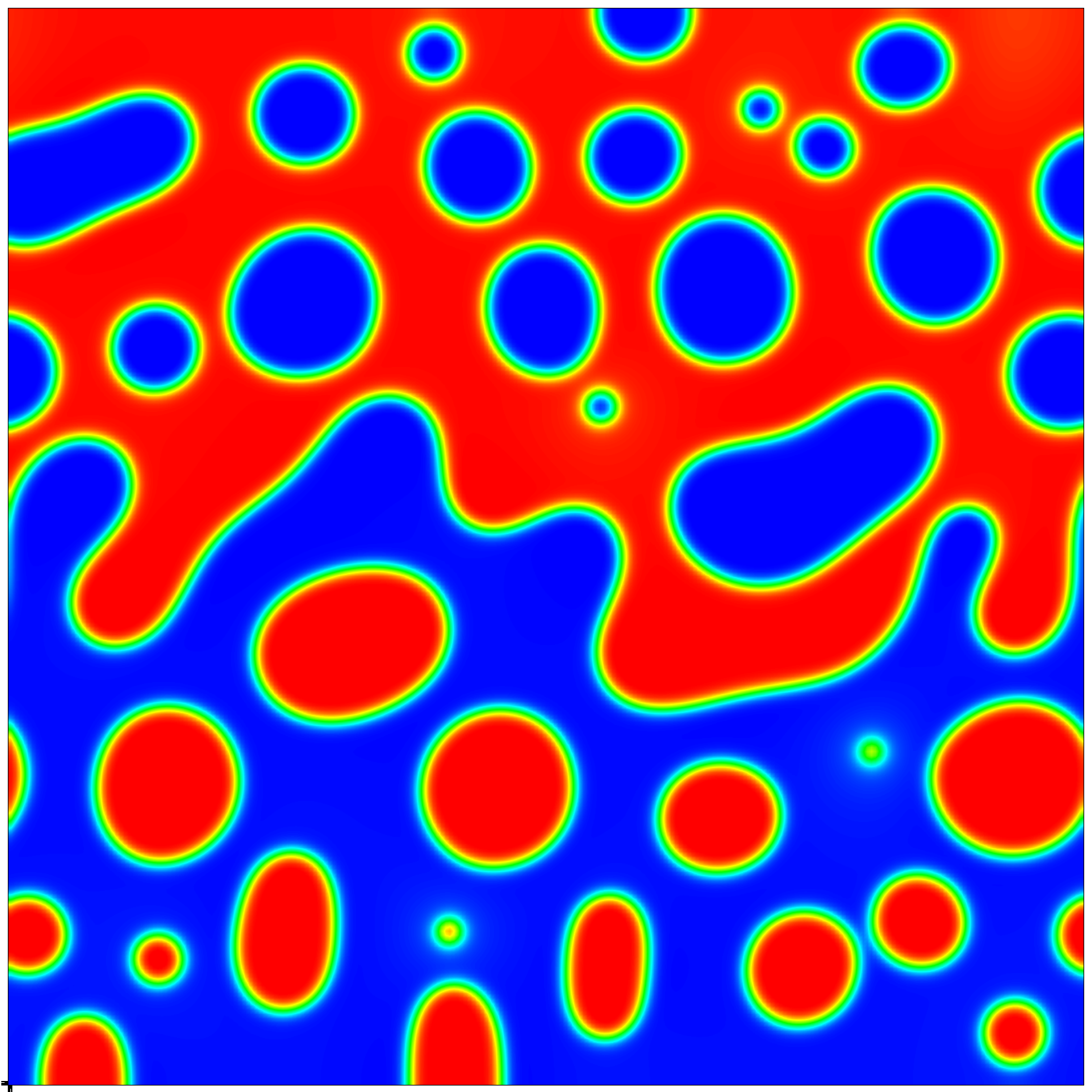}
\includegraphics[width=0.22\textwidth]{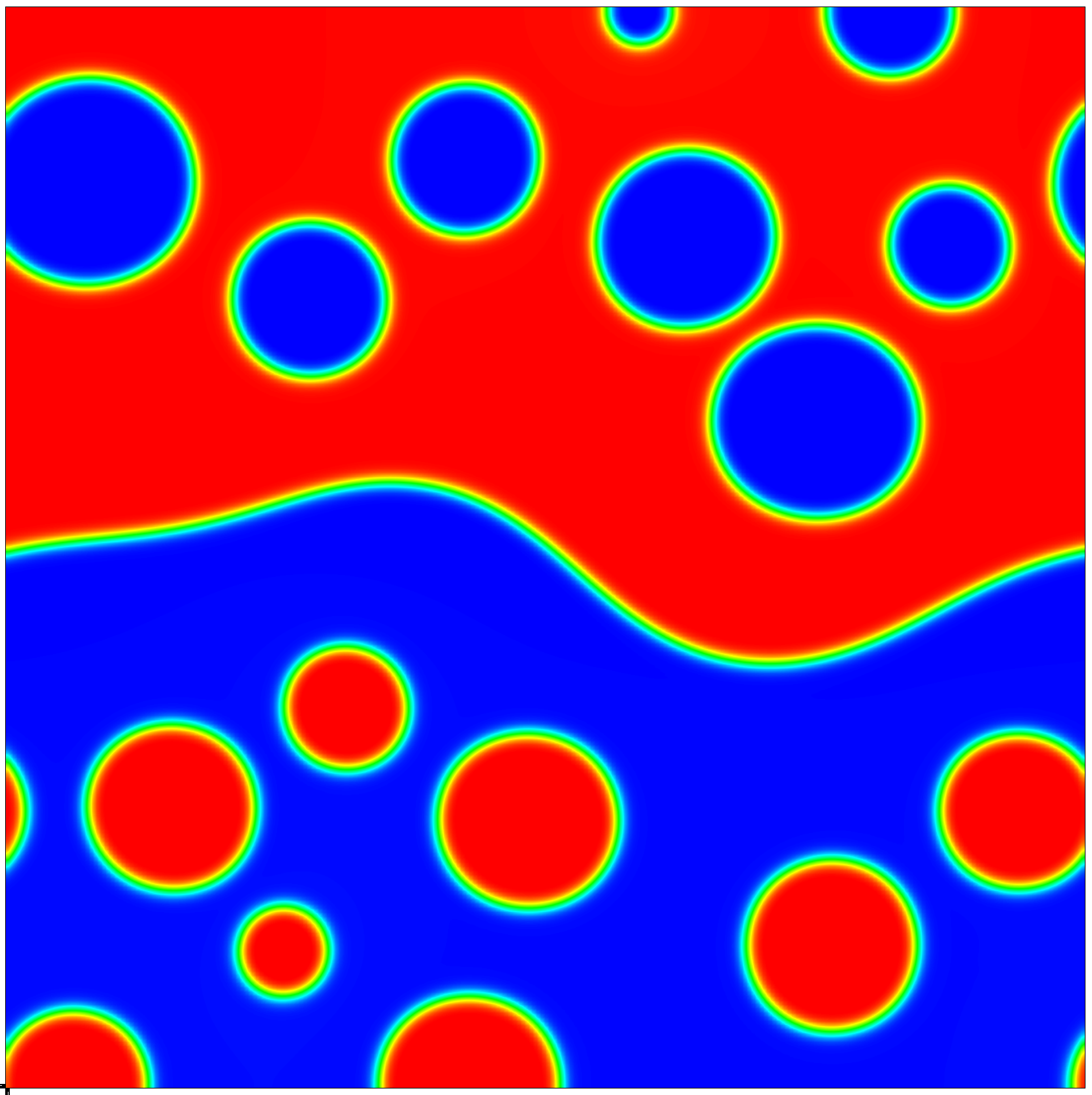}
}

\caption{Coarsening dynamics driven by the Cahn-Hilliard-Navier-Stokes equations. This figures shows that spinodal decomposition and nucleation happen simultaneously in different locations due to the difference of volume fraction ratios.}
\label{fig:CHNS-Coarsening}
\end{figure}

In addition, we further visualize the energy evolution $\cE(t)$ and the numerical solution for $s(t)$. These are summarized in Figure \ref{fig:CHNS-E-S}, highlighting the accuracy and energy stable property of Scheme \ref{scheme:CHNS-steps-fullly} on solving the Cahn-Hilliard-Navier-Stokes equations.

\begin{figure}
\center
\subfigure[caption]{\includegraphics[width=0.45\textwidth]{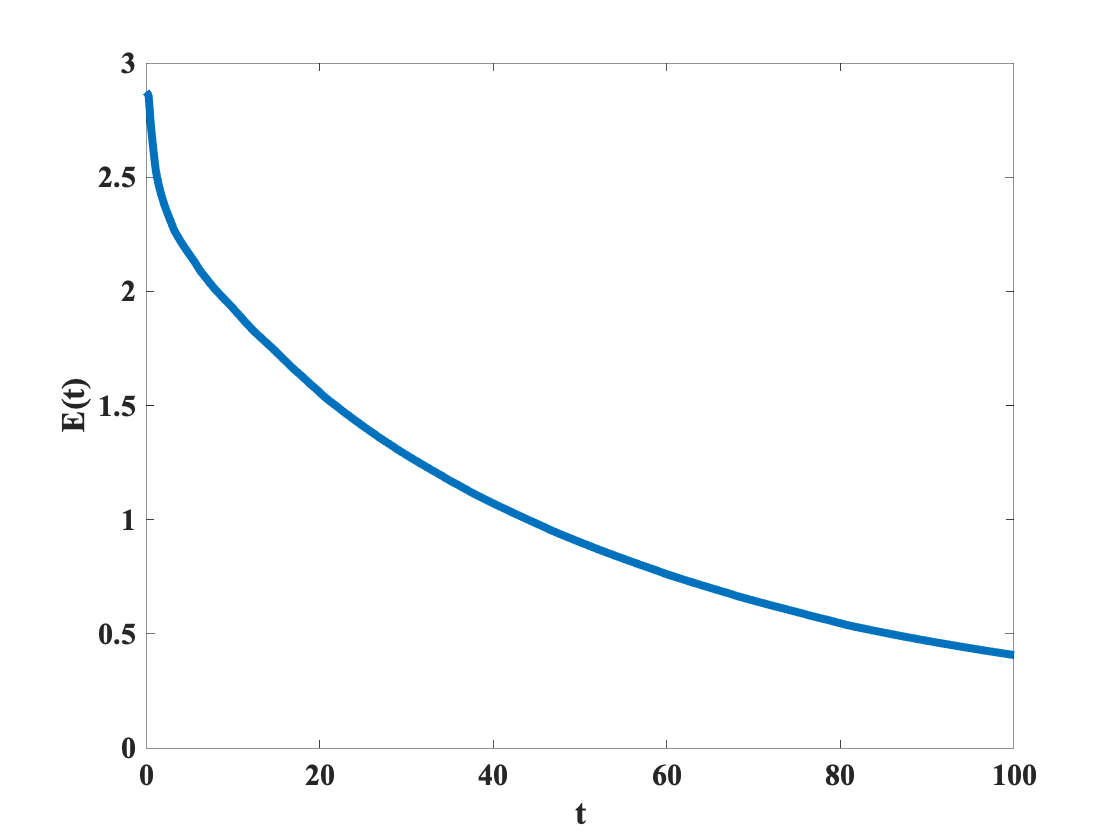}}
\subfigure[caption]{\includegraphics[width=0.45\textwidth]{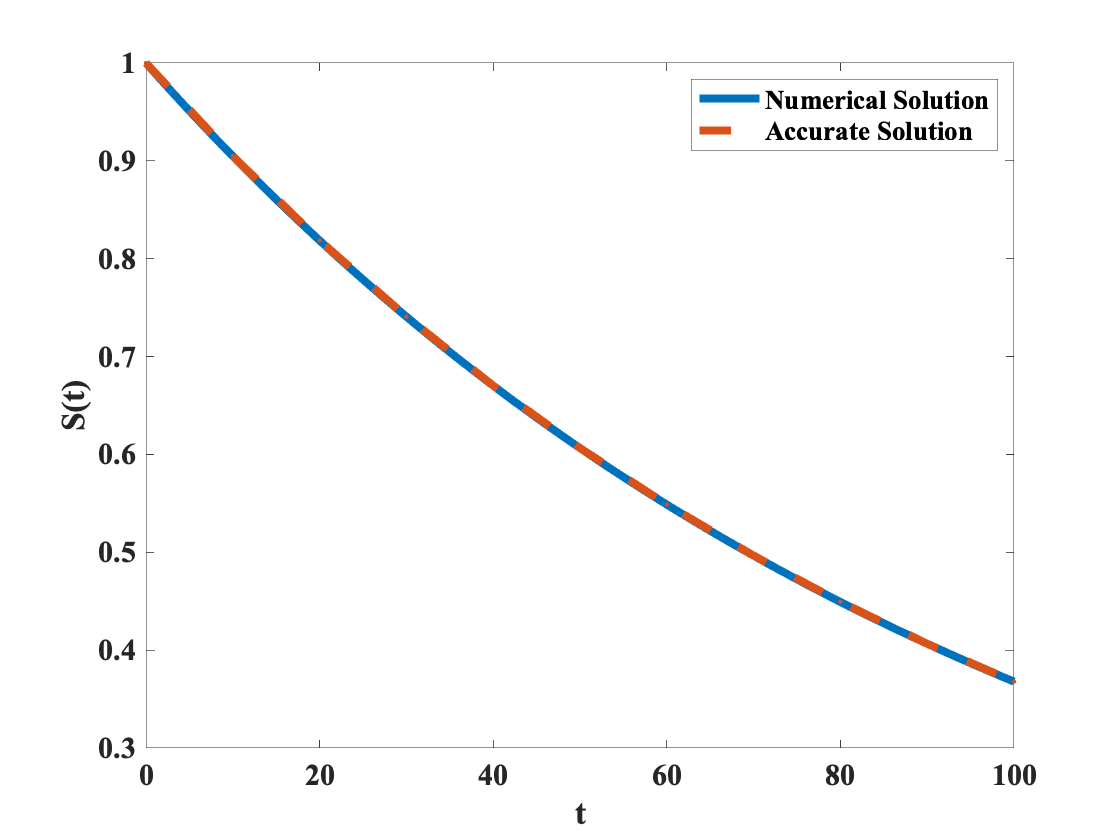}}
\caption{This figure shows the time evolution of energy $\cE(t)$ and auxiliary variable $s(t)$ for the simulations in Figure \ref{fig:CHNS-Coarsening}.  In (a), the numerical calculated energy is decreasing in time, which agrees well with the energy-stable theoretical results. In (b), the numerical solution of $S(t)$ accurately approximates its original definition $e^{-\frac{t}{T}}$.}
\label{fig:CHNS-E-S}
\end{figure}

In the second example, we use Scheme \ref{scheme:CHNS-steps-fullly} to investigate the Ostwald ripening dynamics. Here we choose an initial condition that contains several drops, but with different radii. A similar problem has been used as a benchmark problem for the Cahn-Hilliard equation in \cite{GuoBenchmark}. Here we choose the domain $\Omega=[0, 1]^2$, and parameters $\rho=1$, $\eta =1$, $\varepsilon=0.01$, $T=100$.
In the implementation,  we use $256^2$ meshes and the time step $\delta =0.005$. The numerical results are summarized in Figure \ref{fig:CHNS-SevenBalls}, where the Ostwald ripening dynamics are observed. 

\begin{figure}
\center

\subfigure[profiles of $\phi$ at $t=0,5,10, 15$]{
\includegraphics[width=0.22\textwidth]{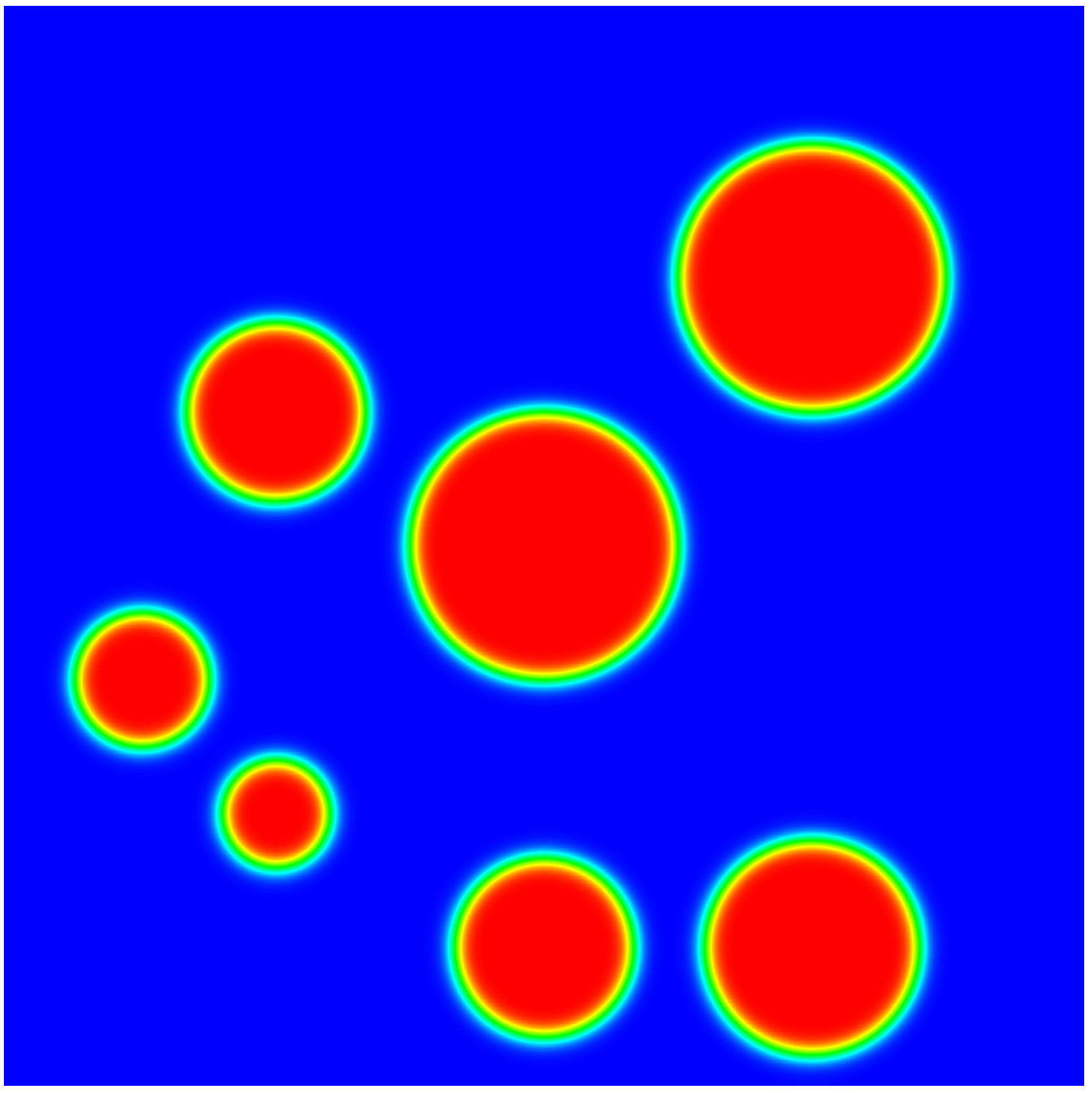}
\includegraphics[width=0.22\textwidth]{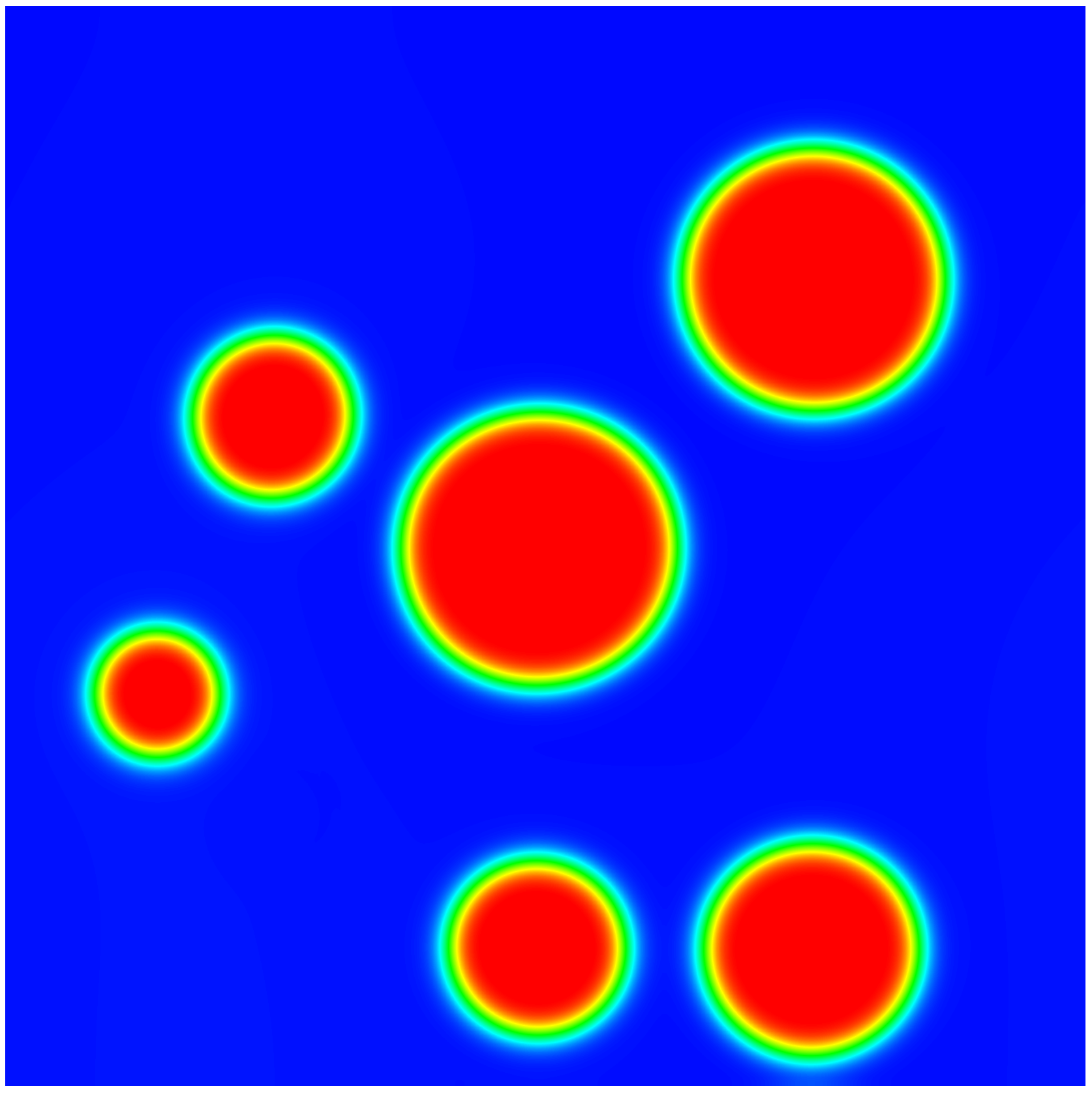}
\includegraphics[width=0.22\textwidth]{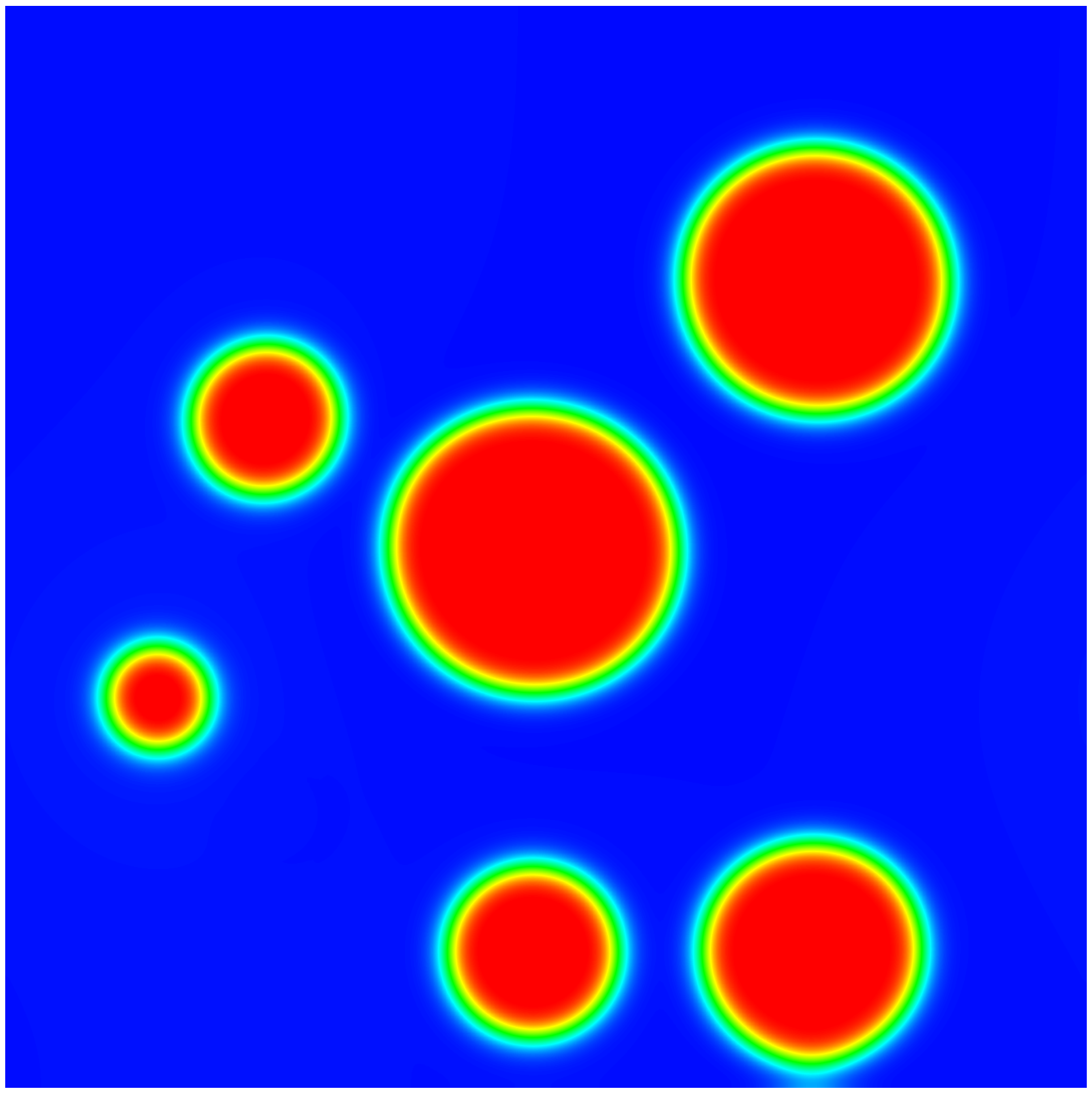}
\includegraphics[width=0.22\textwidth]{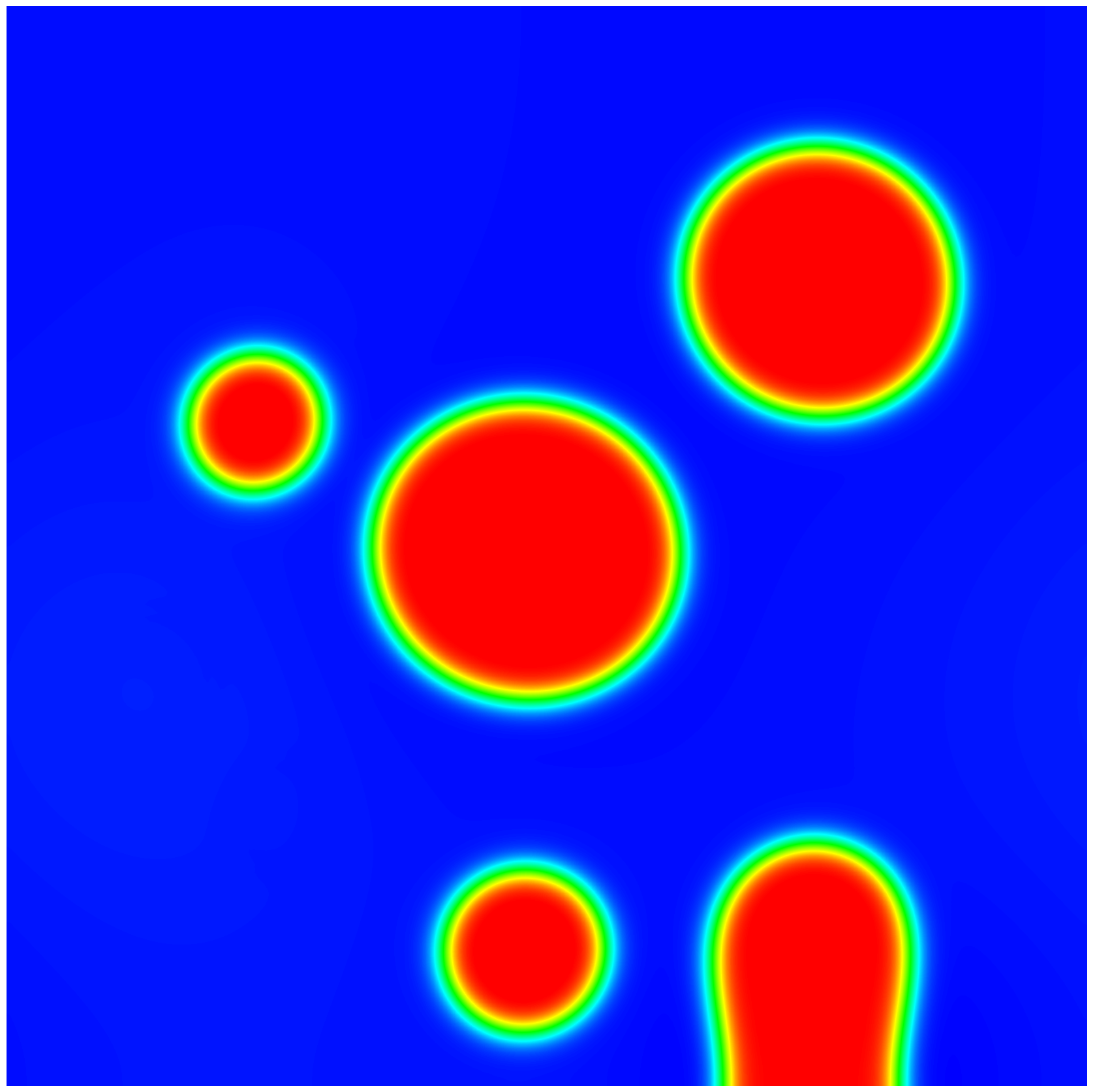}
}

\subfigure[profiles of $\phi$ at $t=20,30,50, 100$]{
\includegraphics[width=0.22\textwidth]{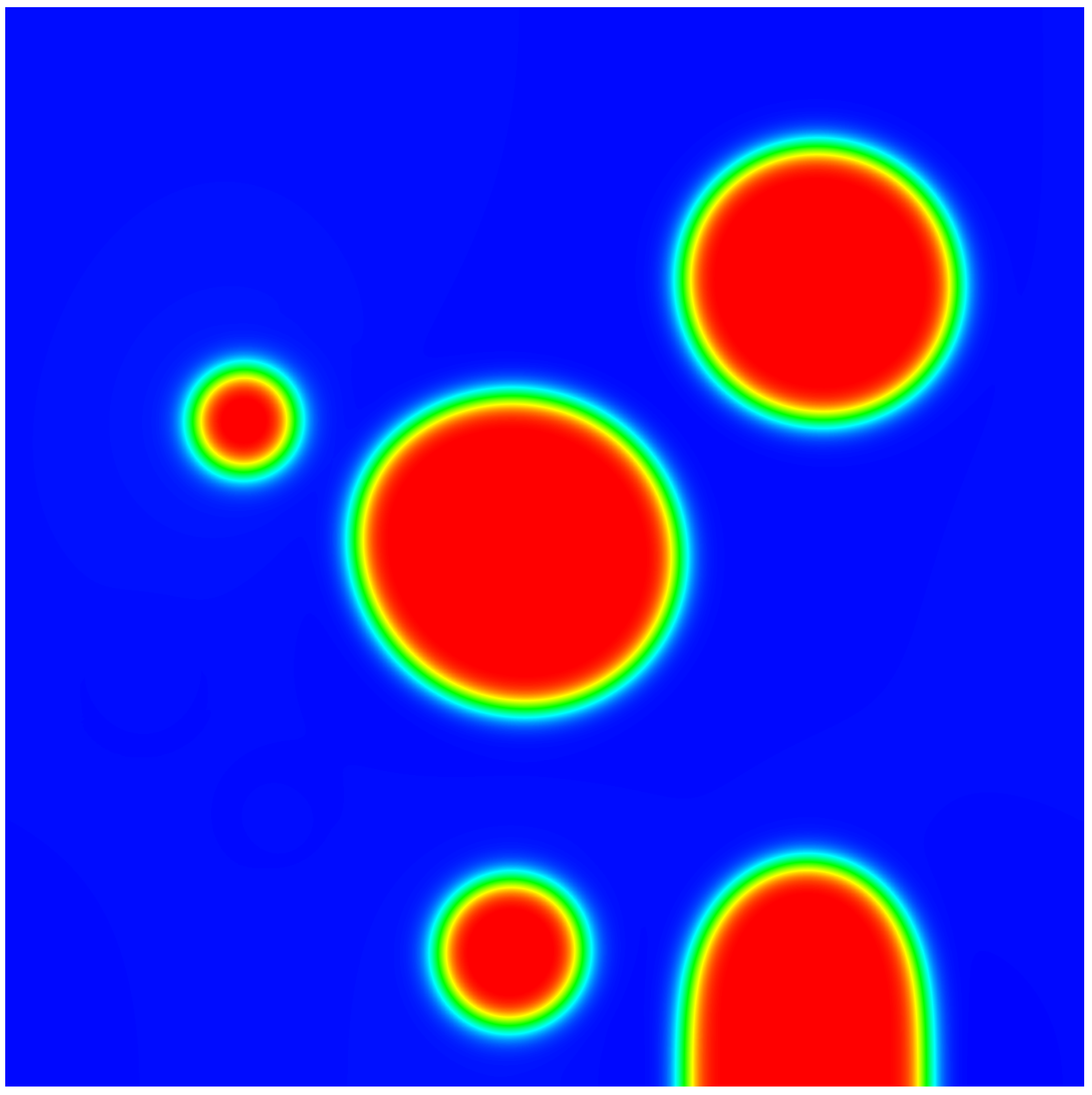}
\includegraphics[width=0.22\textwidth]{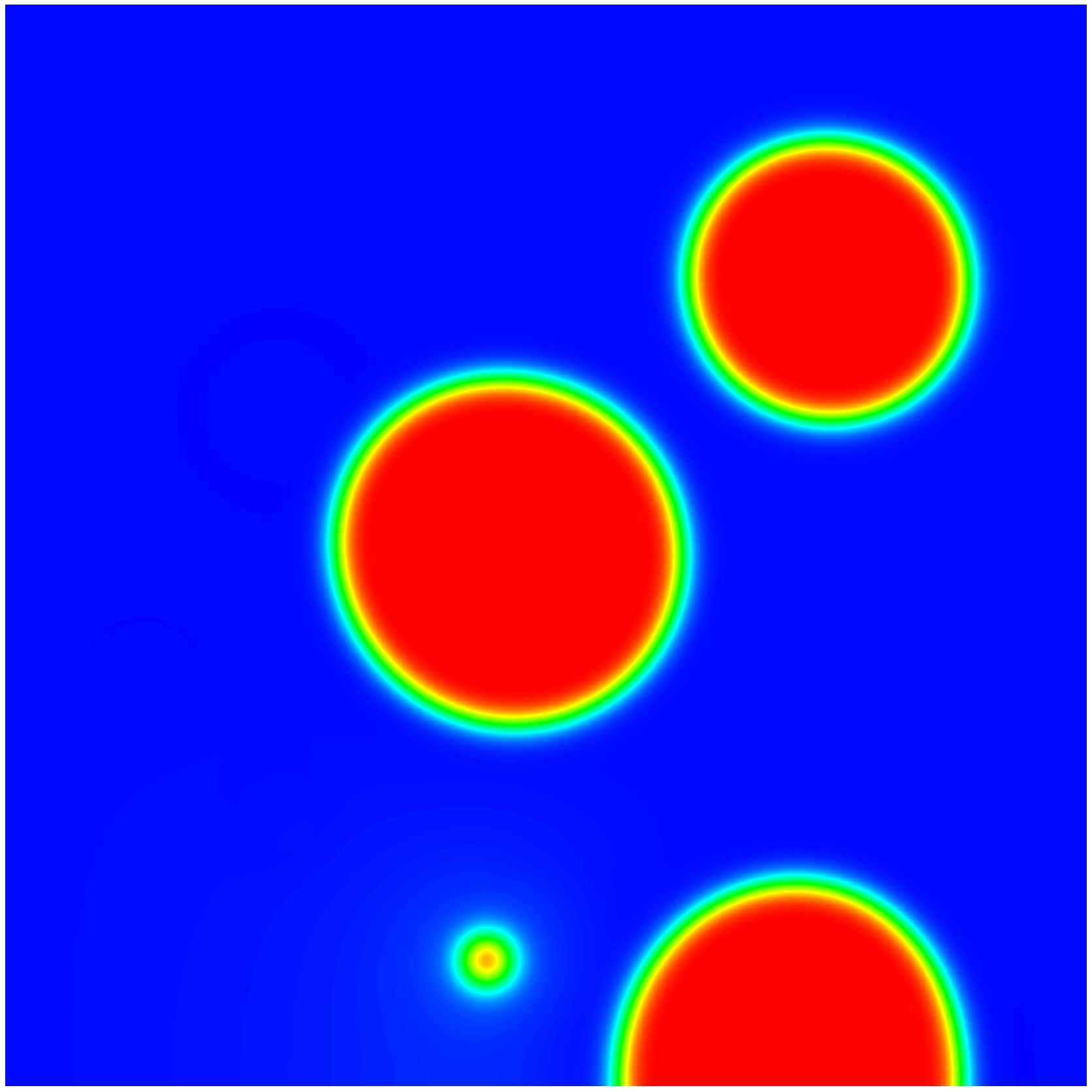}
\includegraphics[width=0.22\textwidth]{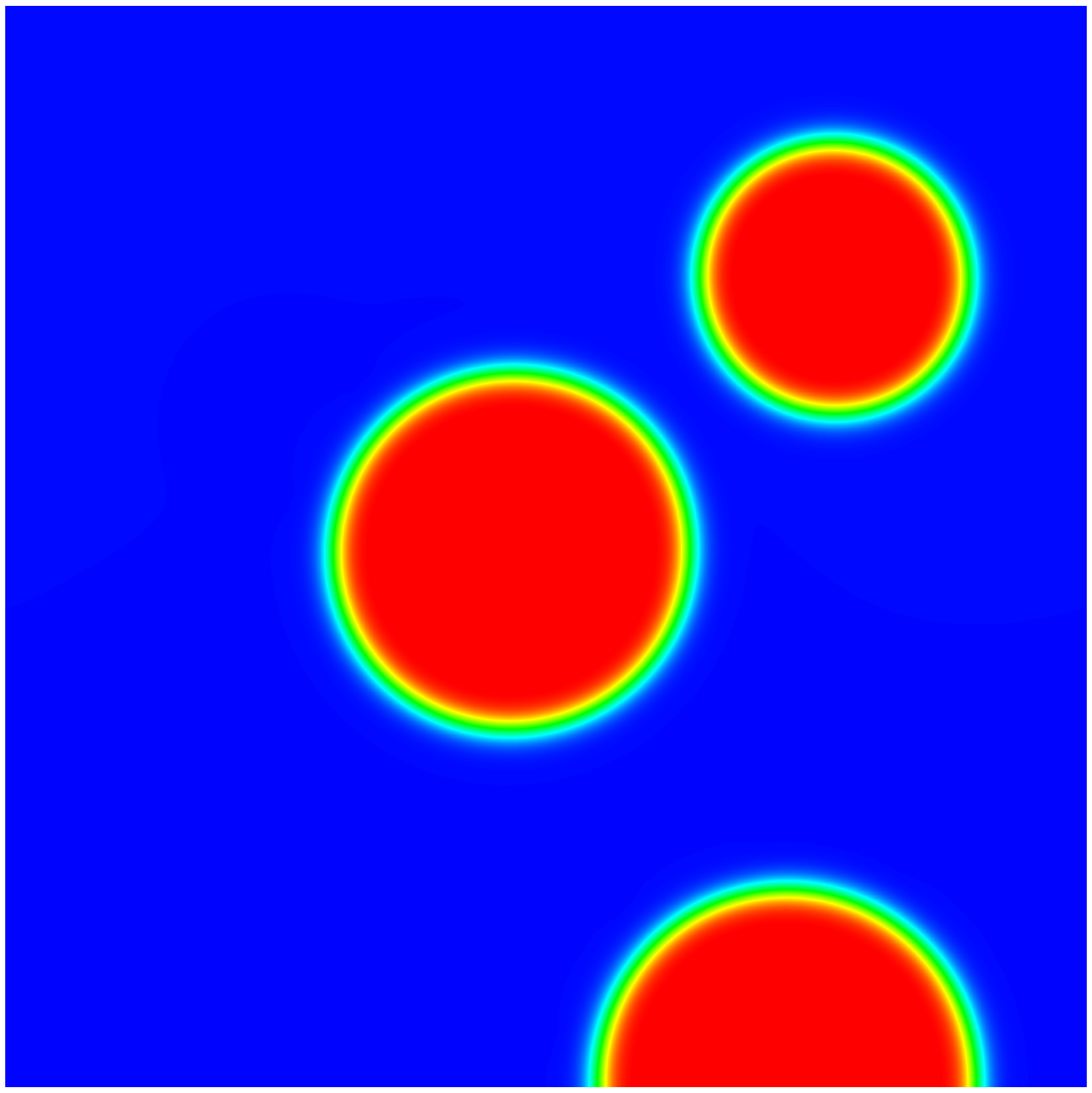}
\includegraphics[width=0.22\textwidth]{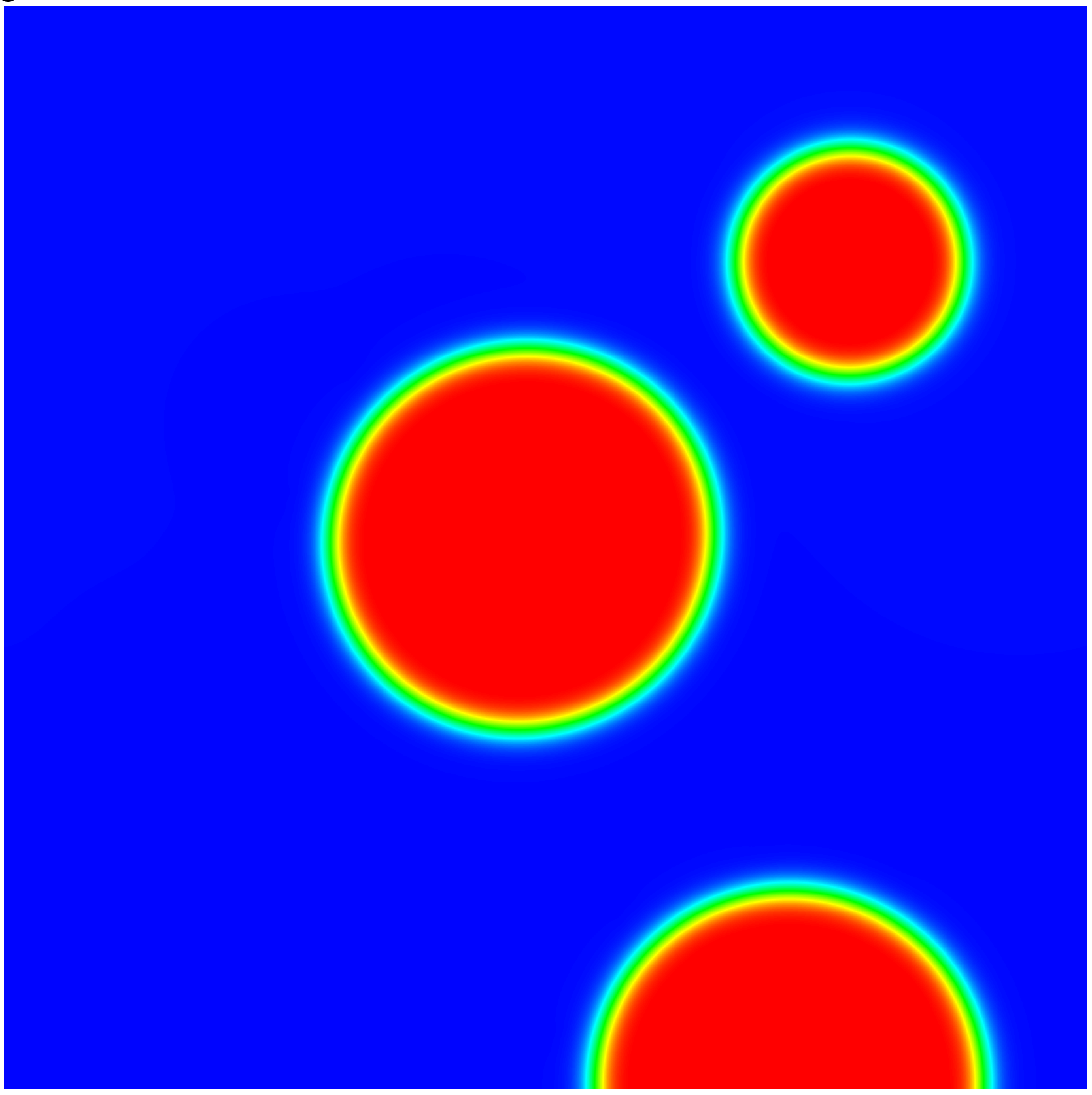}
}
\caption{Oswald ripening that is driven by the Cahn-Hilliard-Navier-Stokes system. In this figure, the profiles of $\phi$ at various times are visualized. Here red represents $1$, and blue represents $-1$.}
\label{fig:CHNS-SevenBalls}
\end{figure}

\subsection{Numerical examples for the Ericksen-Leslie liquid crystal models}
In this sub-section, we use the fully decoupled scheme \ref{scheme:LC-Ericksen-Lesile-fully} or \ref{scheme:LC-Ericksen-Lesile-steps-fully} to investigate the Ericksen-Leslie hydrodynamic model for the nematic liquid crystal fluid flow. First of all, we verify the second-order time convergence of Scheme \ref{scheme:LC-Ericksen-Lesile-fully}. We consider the domain $\Omega=[0, L]^2$ with $L=1$, the initial condition condition
$$
\bp(x, y, t=0) = (0.01 \cos(2\pi y) \cos(2\pi x), 0.01 \cos(2\pi y) \cos(2\pi z)).
$$
and we  choose the parameters $T = 0.5$, $\rho = 1$, $\eta = 10^2$, $\varepsilon^2 = 0.1$, $a = 1.2$, $K = 10^{-2}$ and $\gamma_0 = 0$.. We fix the uniform meshes as $128^2$, and use various time steps $\delta t = 10^{-2} \frac{1}{2^k}$, $k=0,1,\cdots$. Following the same procedure as the previous subsection, we calculate the errors as the difference between the numerical solution at the current time step and the numerical solution with the adjacent finner time step. Both the $l_2$ norm and $l_\infty$ norm for the numerical errors are summarized in Figure \ref{fig:LCP-mesh-refinement}. It can be easily observed that Scheme \ref{scheme:LC-Ericksen-Lesile-steps-fully} provides second-order accuracy in time.

\begin{figure}
\center
\subfigure[$l_2$ error]{\includegraphics[width=0.475\textwidth]{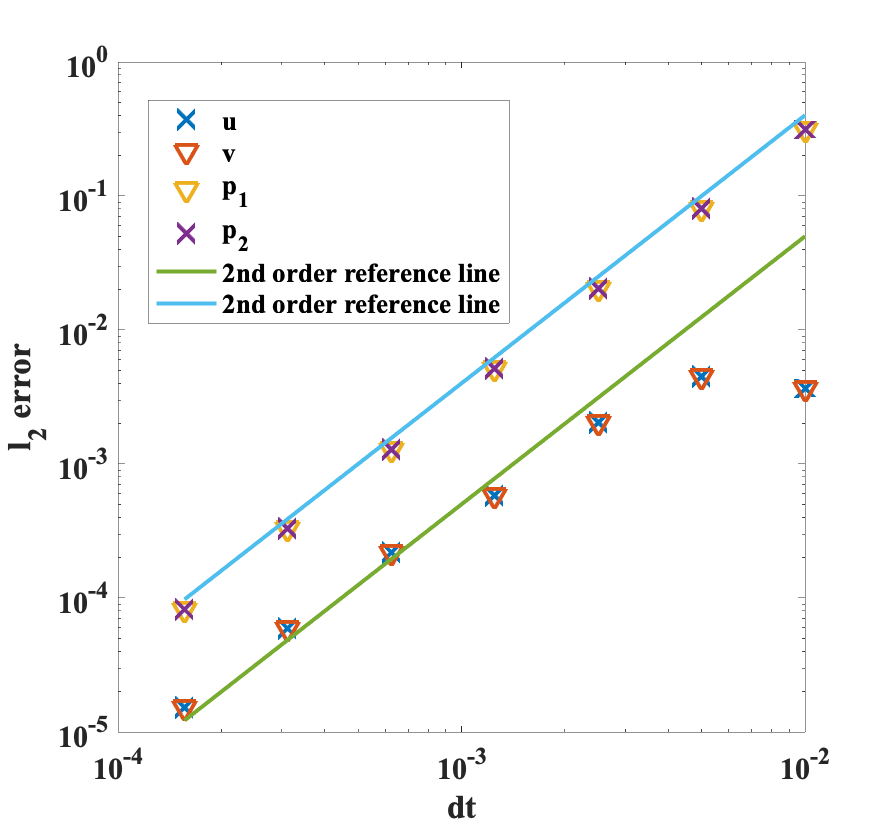}}
\subfigure[$l_\infty$ error]{\includegraphics[width=0.475\textwidth]{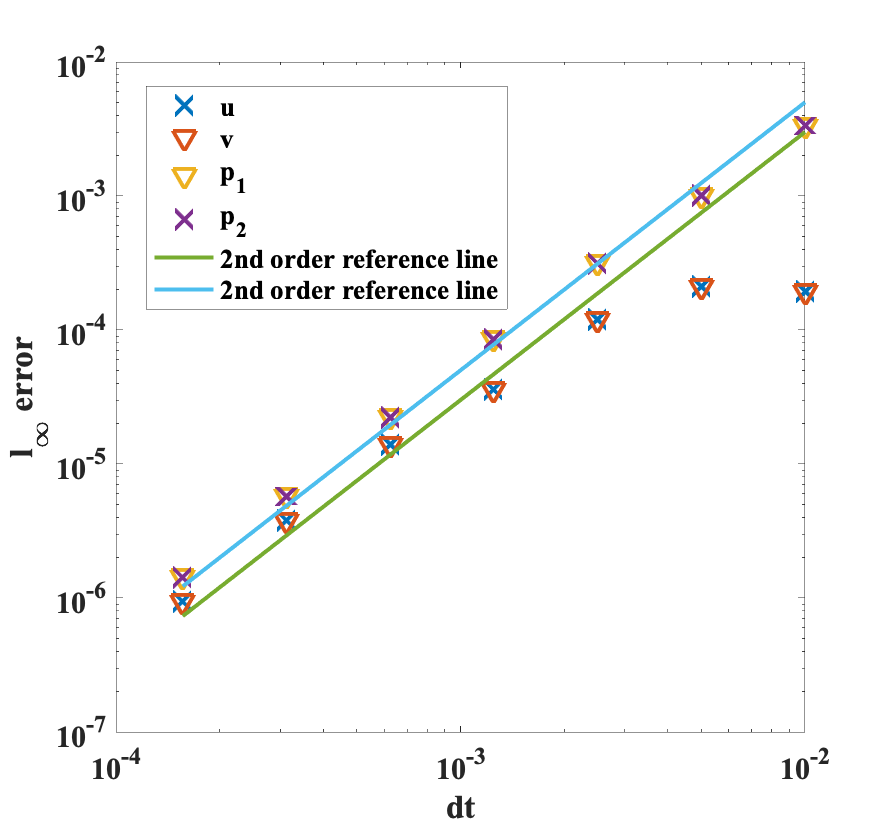}}
\caption{Time step mesh-refinement for Scheme \ref{scheme:LC-Ericksen-Lesile-steps-fully}. Here we denote $\bu=(u,v)$ and $\bp=(\bp_1, \bp_2)$. This figure shows that Scheme \ref{scheme:CHNS-steps-fullly} is second-order accurate in both the $l_2$ norm and $l_\infty$ norm.}
\label{fig:LCP-mesh-refinement}
\end{figure}

With the Scheme \ref{scheme:LC-Ericksen-Lesile-steps-fully}, we further conduct some benchmark simulations. In this example, we consider a rectangular domain $\Omega=[0 , 2]\times [0, 4]$, and set up several defects at the starting time, as shown in Figure \ref{fig:LCP-bp}(a). The parameters are chosen as $T=100$,  $\rho = 1$,  $\eta = 10^2$, $\varepsilon^2 = 0.1$, $a = 1.2$, $K = 10^{-2}$,  and $\gamma_0 = 0$.
We choose the Neumann boundary condition for the liquid crystal $\bp$. We use $128 \times 256$ meshes and $\delta t= 10^{-3}$  for the simulation. It is known that the defects are unstable in this case and they will annihilate by cancellation or relaxing out from the boundary. 

The results for the evolution dynamics of $\bp$ are summarized Figure \ref{fig:LCP-bp}. We do observe that the point $+2$ ($-2$) defect separates into two of $+1$ ($-1$) defects, as shown in Figure \ref{fig:LCP-bp}(b). Then a point $+1$ defect and a point $-1$ defect annihilate, as shown in Figure \ref{fig:LCP-bp}(d)-(e). Eventually the other defects relax out of the domain, since there is no anchoring at the boundaries, as shown in Figure \ref{fig:LCP-bp}(h)-(i).

\begin{figure}
\subfigure[$t=0$]{\includegraphics[width=0.33\textwidth]{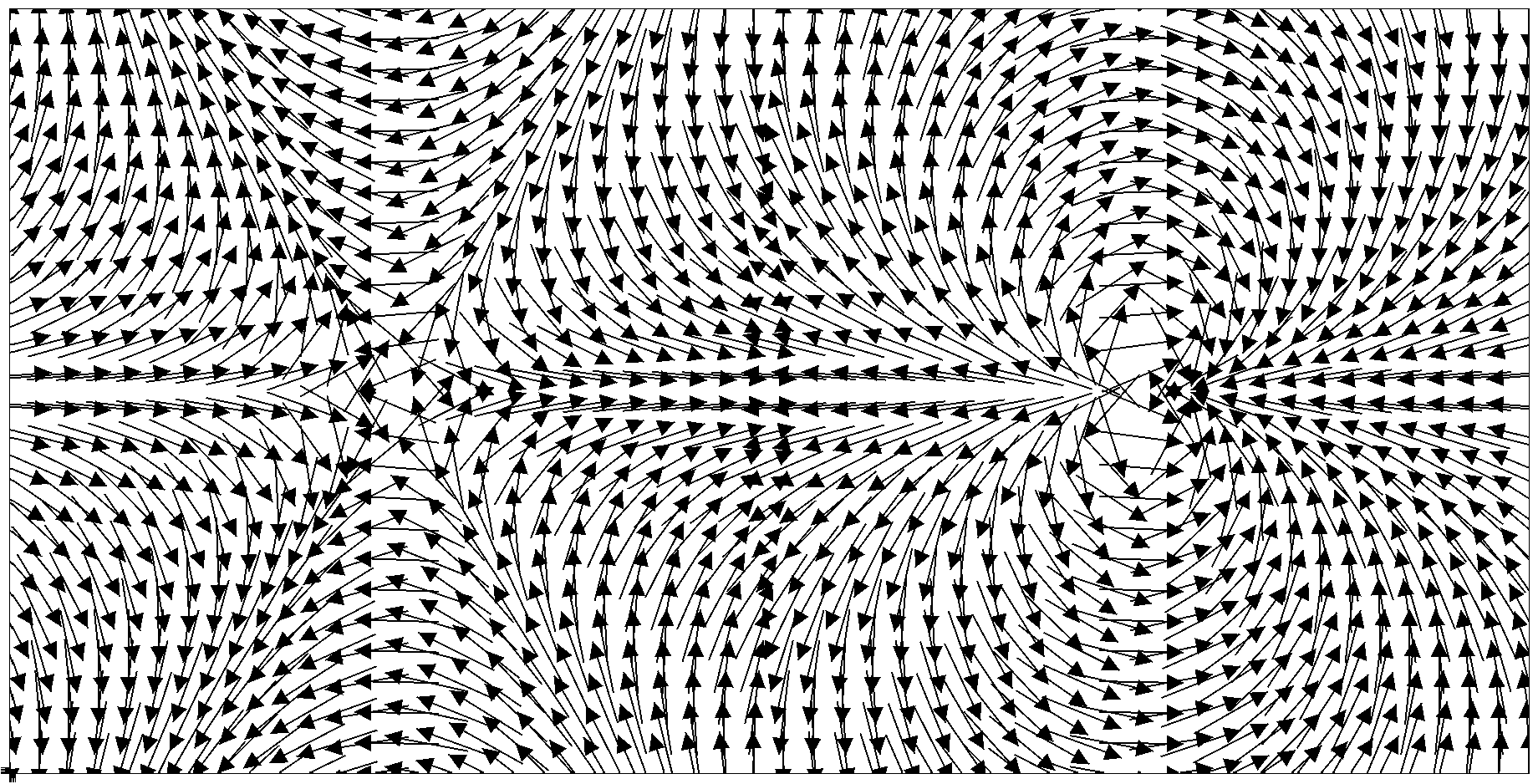}}
\subfigure[$t=2$]{\includegraphics[width=0.33\textwidth]{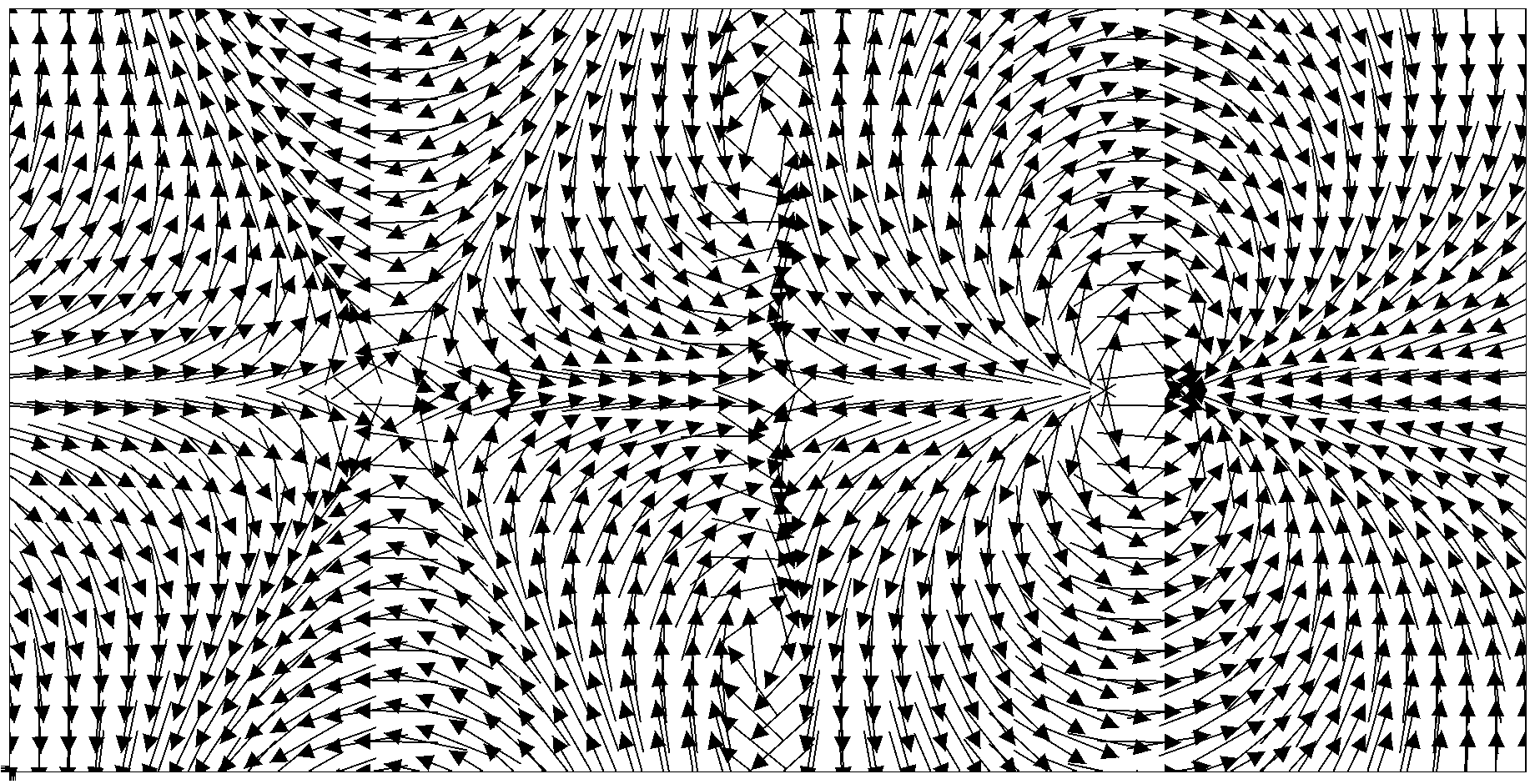}}
\subfigure[$t=5$]{\includegraphics[width=0.33\textwidth]{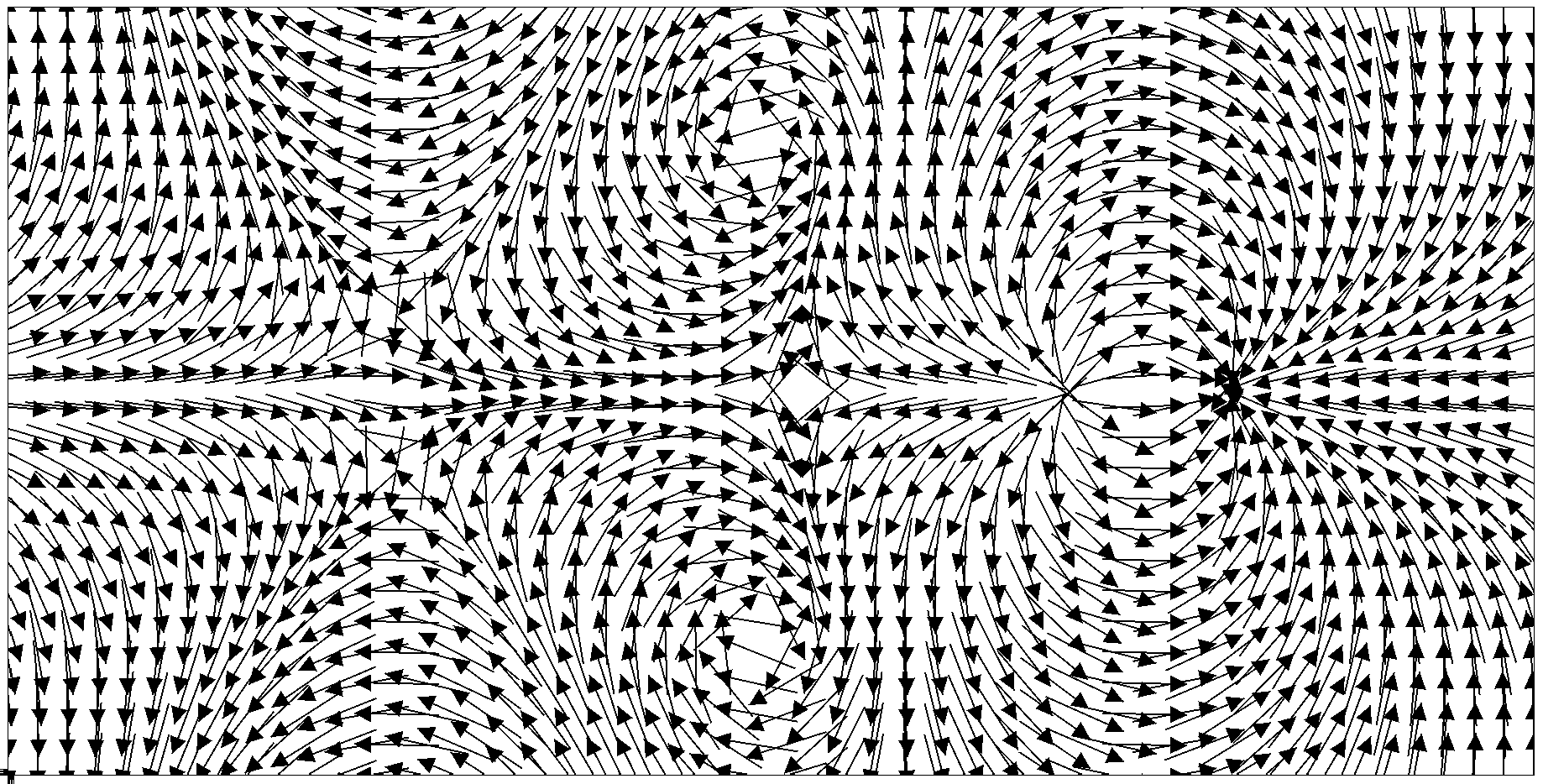}}

\subfigure[$t=17.5$]{\includegraphics[width=0.33\textwidth]{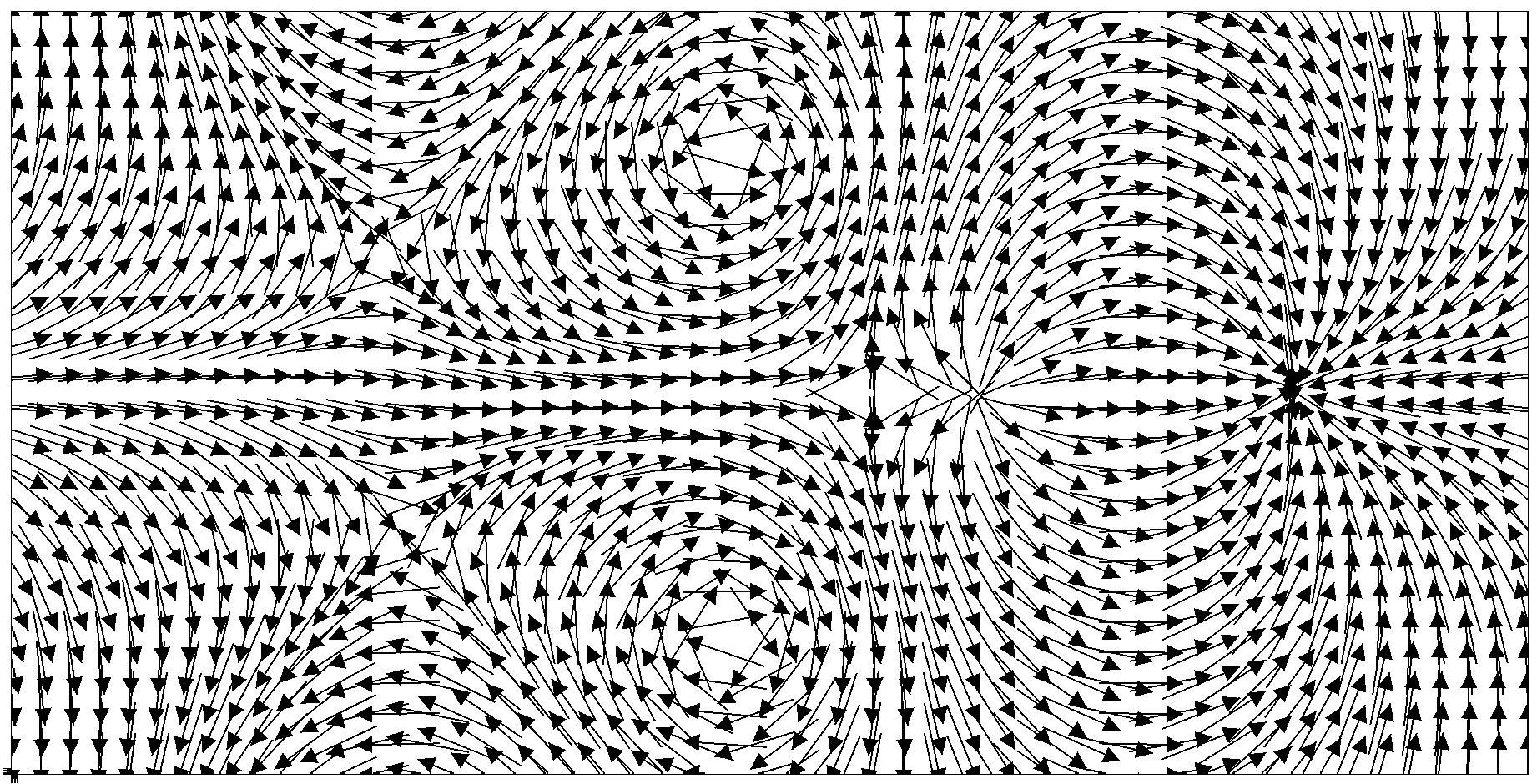}}
\subfigure[$t=20$]{\includegraphics[width=0.33\textwidth]{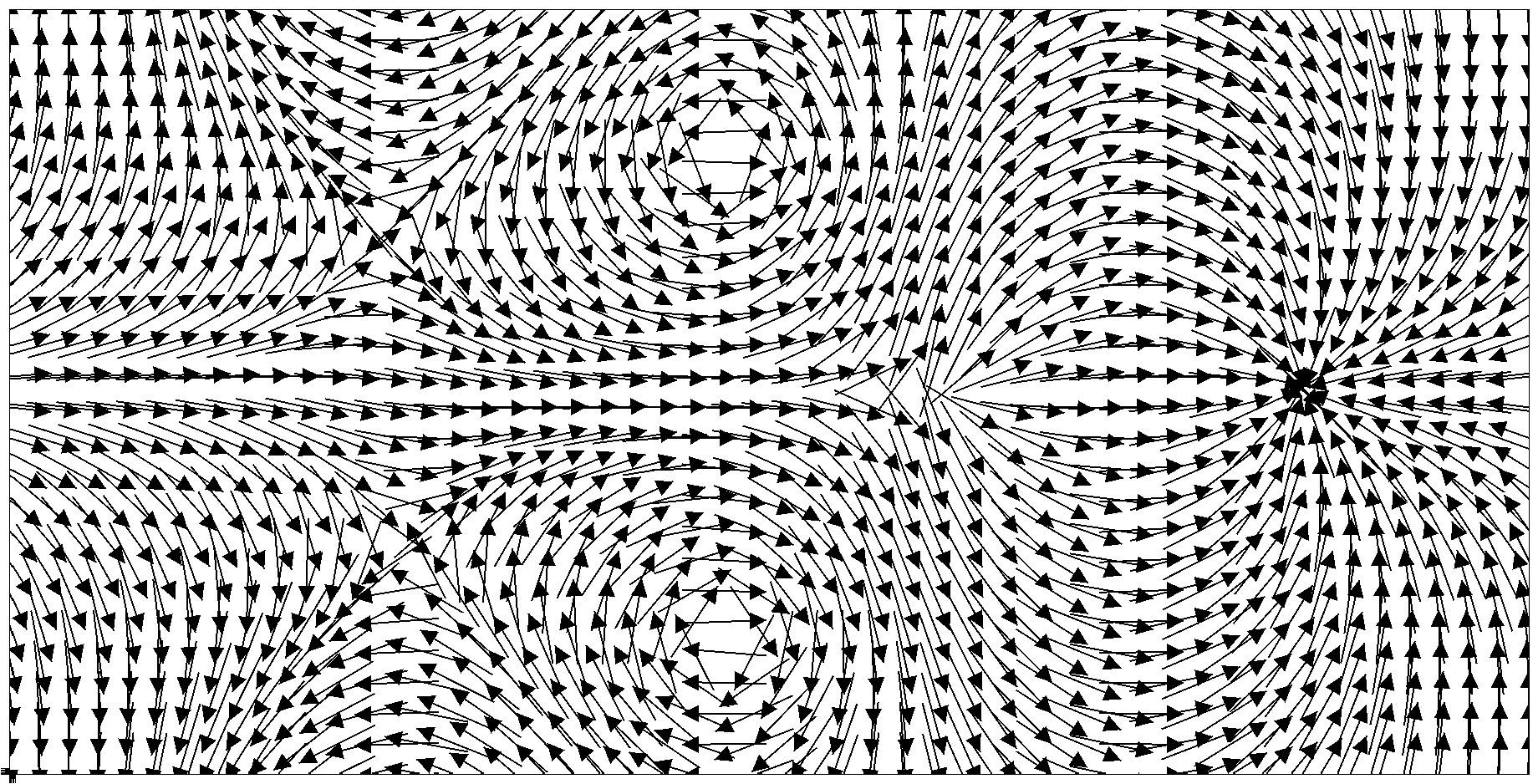}}
\subfigure[$t=25$]{\includegraphics[width=0.33\textwidth]{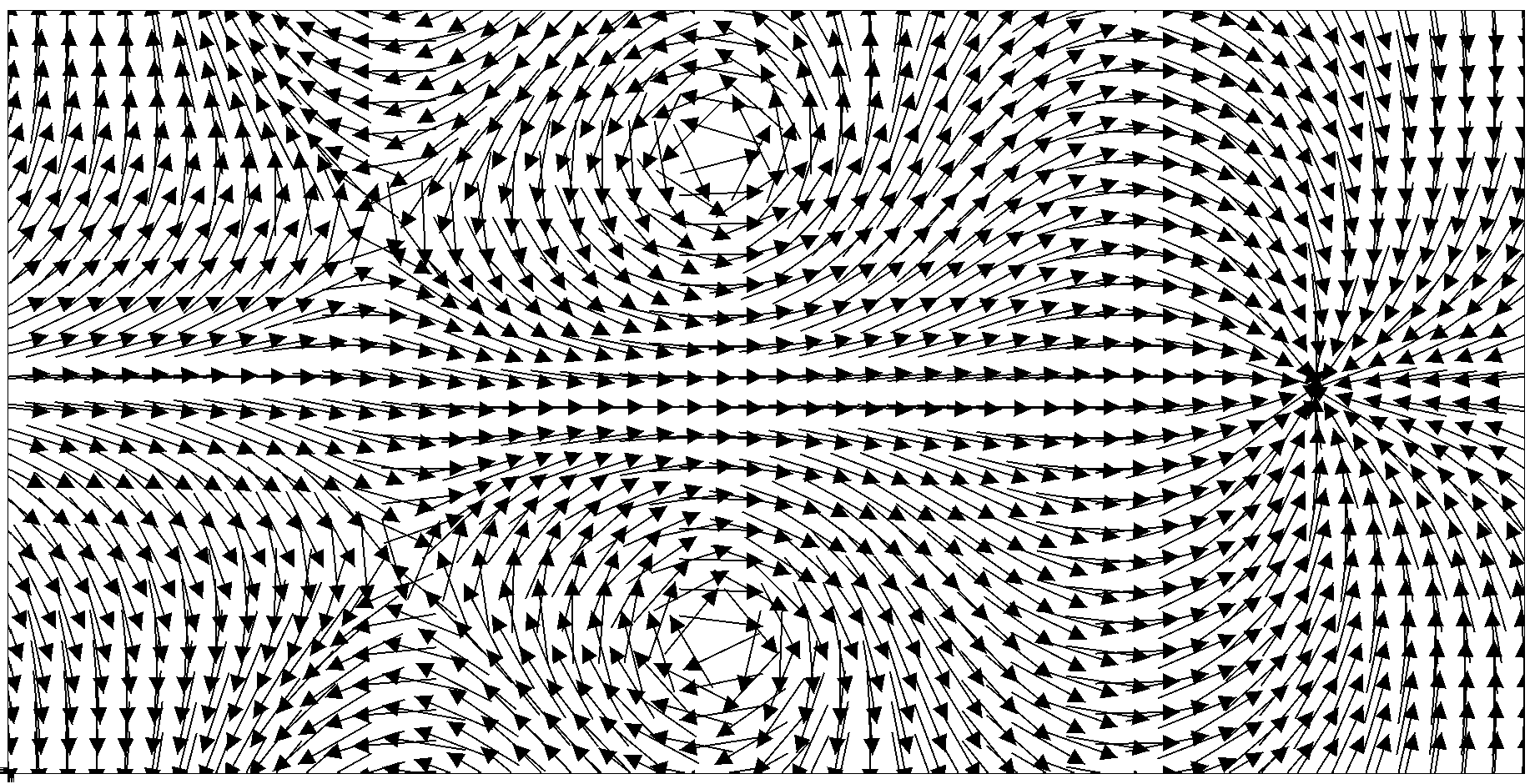}}

\subfigure[$t=50$]{\includegraphics[width=0.33\textwidth]{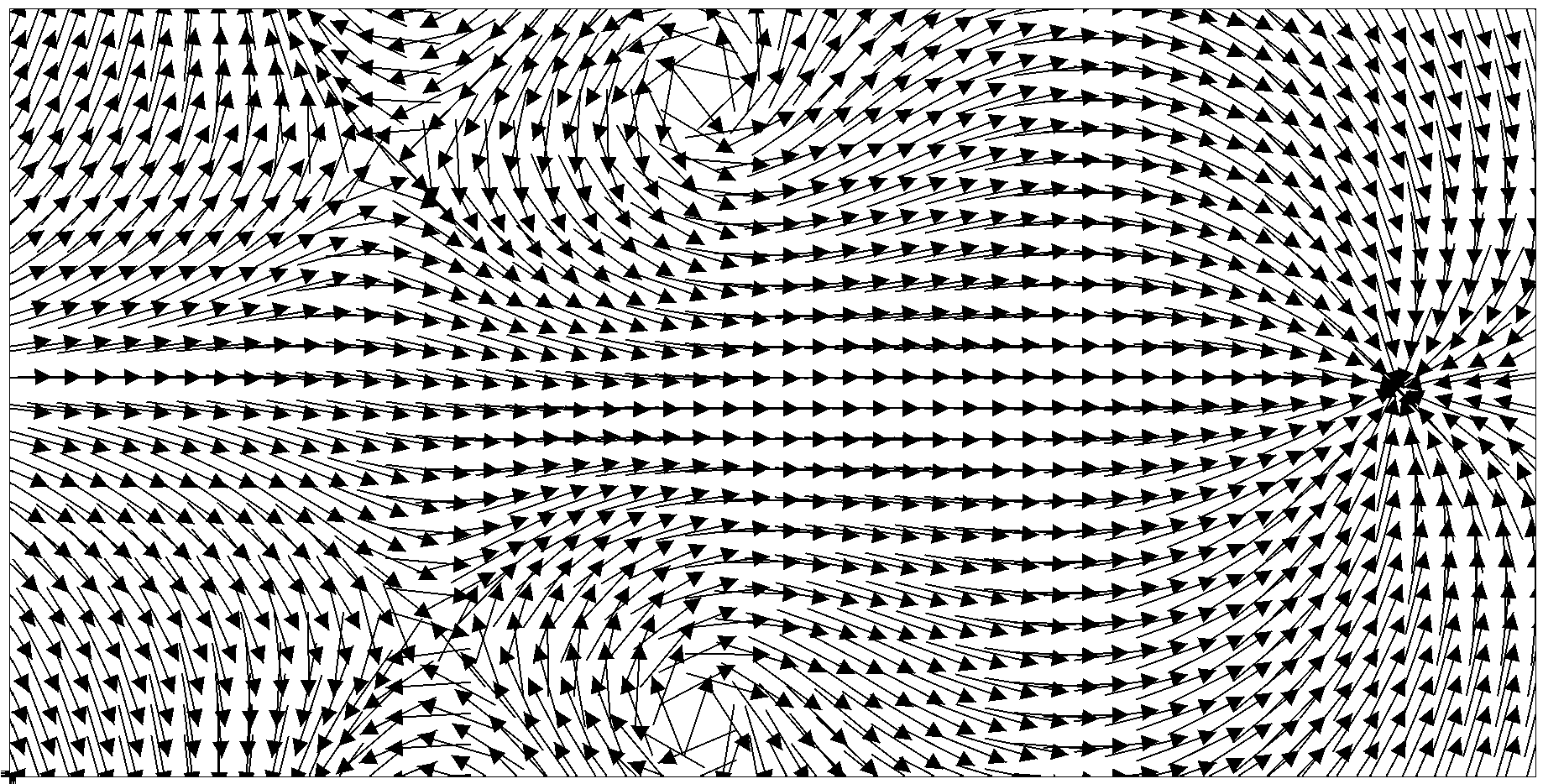}}
\subfigure[$t=60$]{\includegraphics[width=0.33\textwidth]{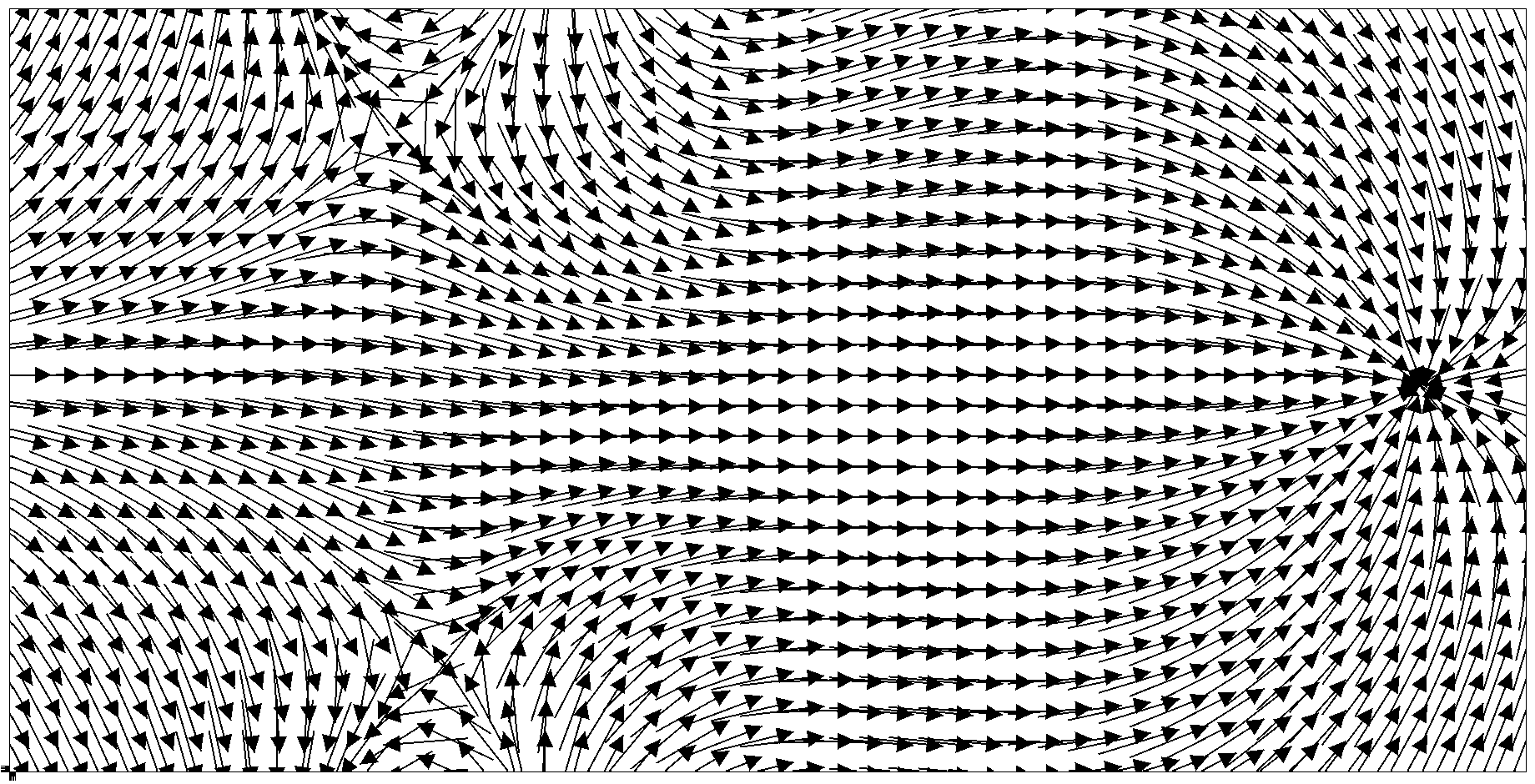}}
\subfigure[$t=75$]{\includegraphics[width=0.33\textwidth]{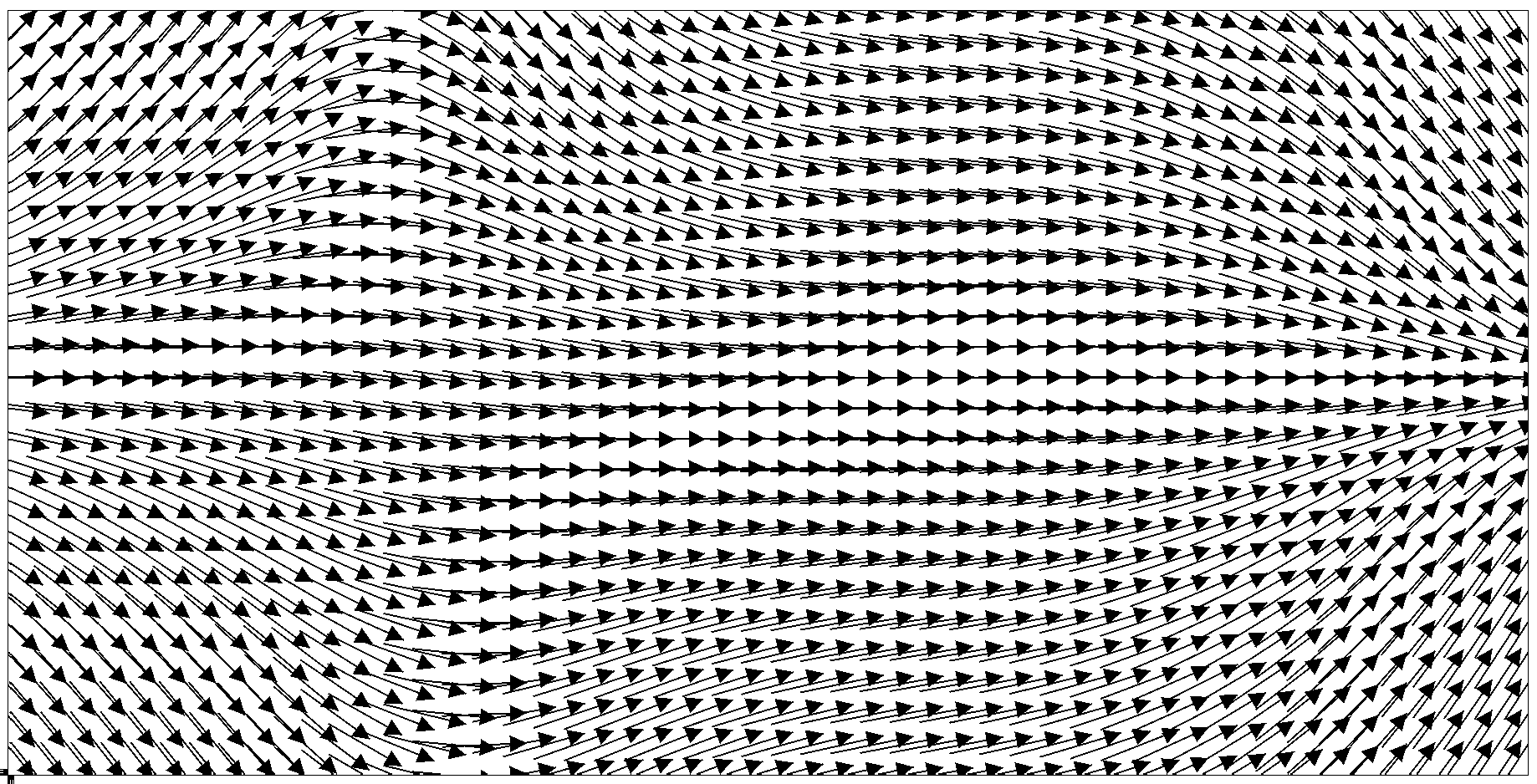}}

\caption{Liquid crystal defect dynamics that are driven by the Ericksen-Leslie hydrodynamic model without any anchoring at the boundaries. In this figure, the evolution dynamics of $\bp$ at different times are shown. The annihilations of defects are observed.}
\label{fig:LCP-bp}
\end{figure}

In addition, to further examine the defect dynamics, we also visualize the length of $\bp$ that are summarized in Figure \ref{fig:LCP-Defect}. It agrees with the qualitative observations for $\bp$ in Figure \ref{fig:LCP-bp}.

\begin{figure}
\subfigure[profiles of the  $\bp$ length at $t=0.5,2,5$]{
\includegraphics[width=0.33\textwidth]{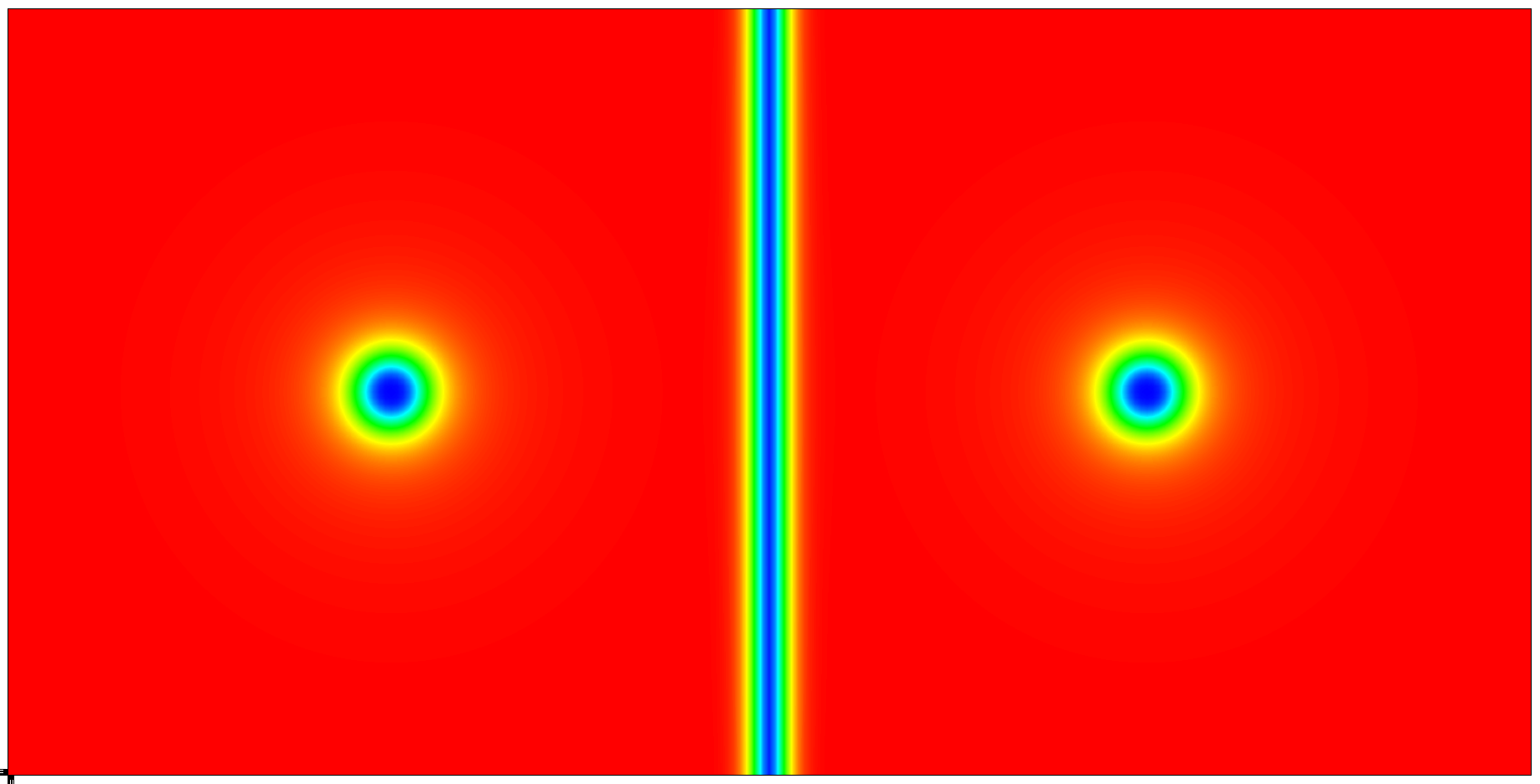}
\includegraphics[width=0.33\textwidth]{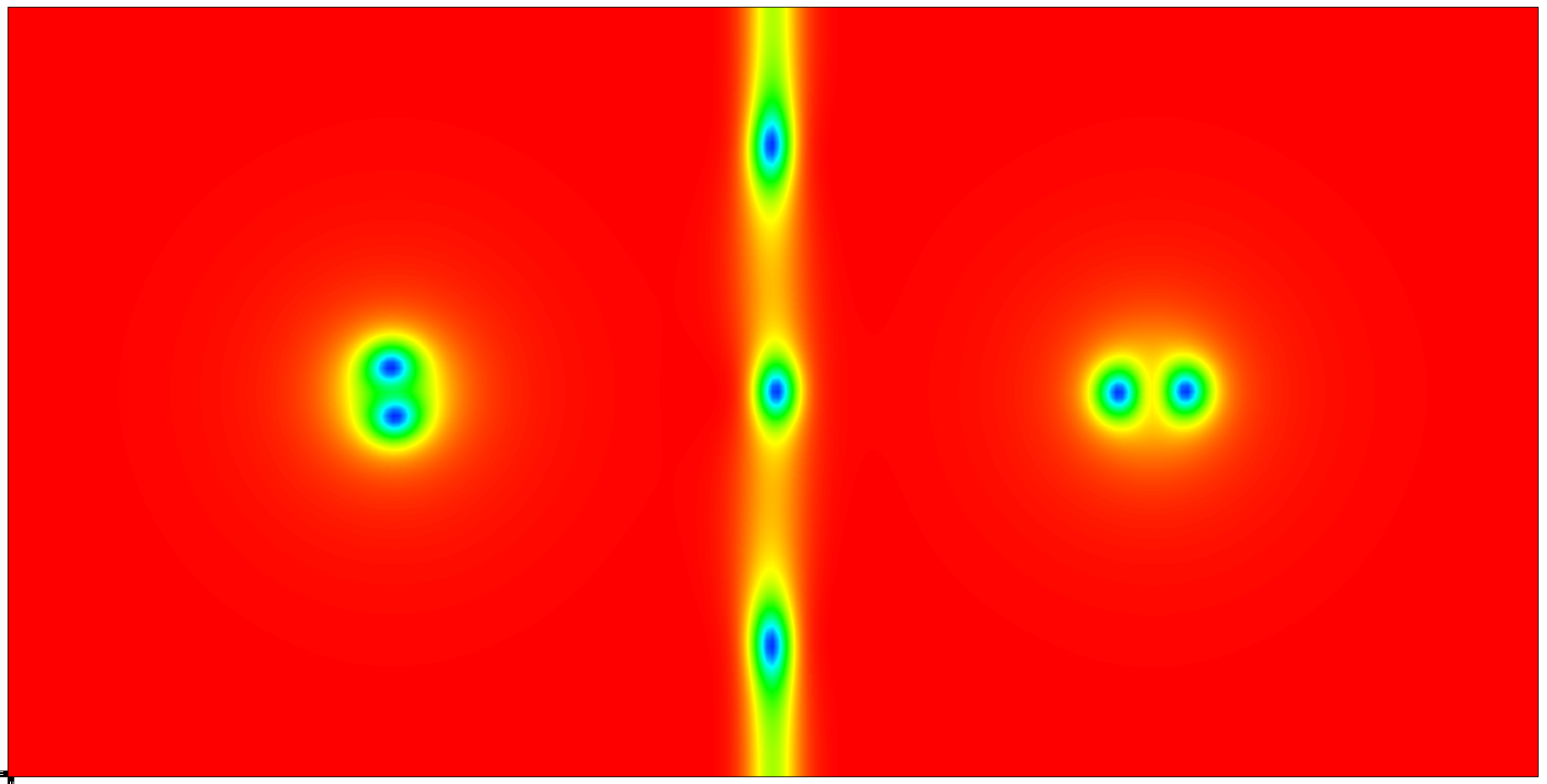}
\includegraphics[width=0.33\textwidth]{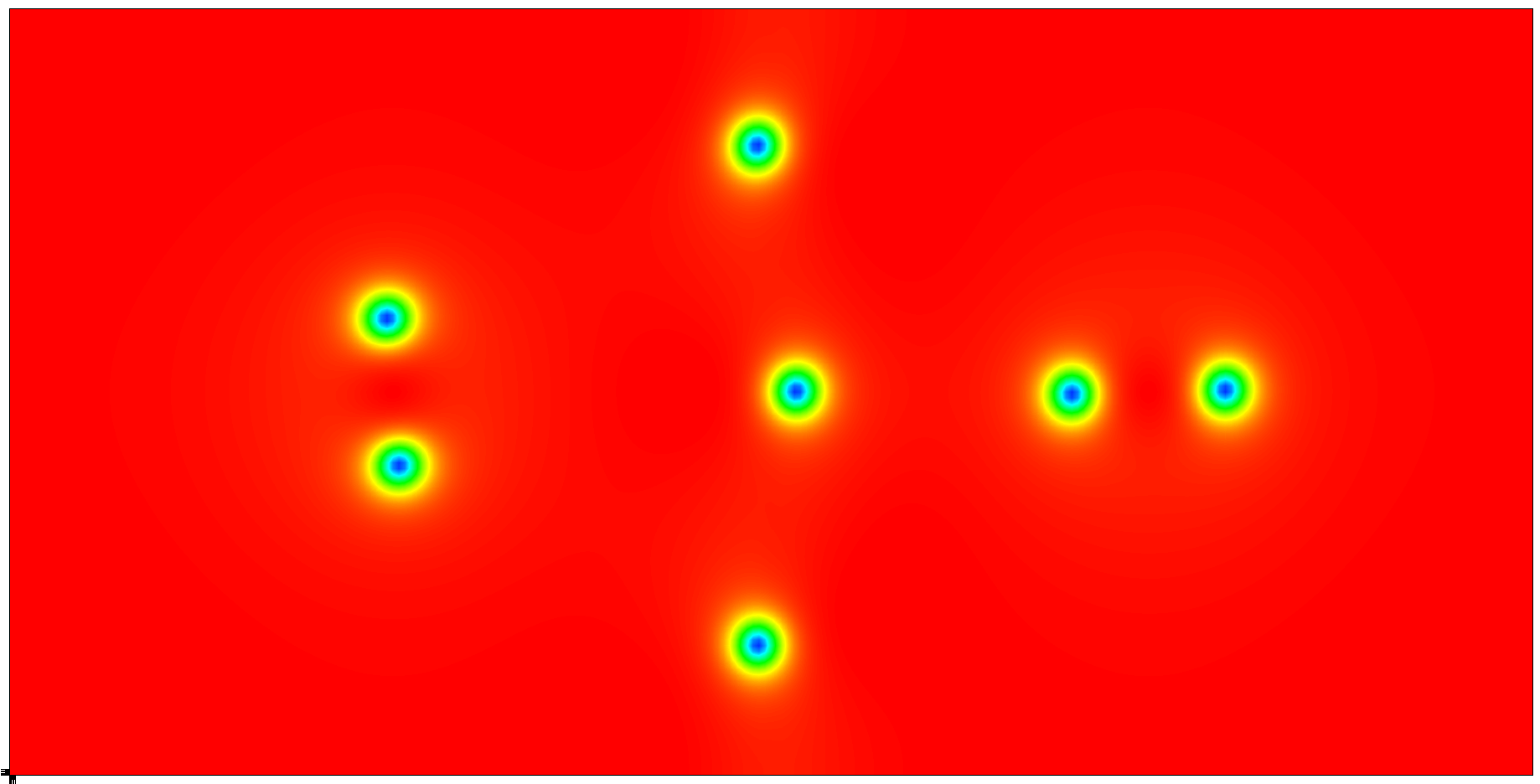}
}

\subfigure[profiles of the $\bp$ length at $t=17.5, 25$]{
\includegraphics[width=0.33\textwidth]{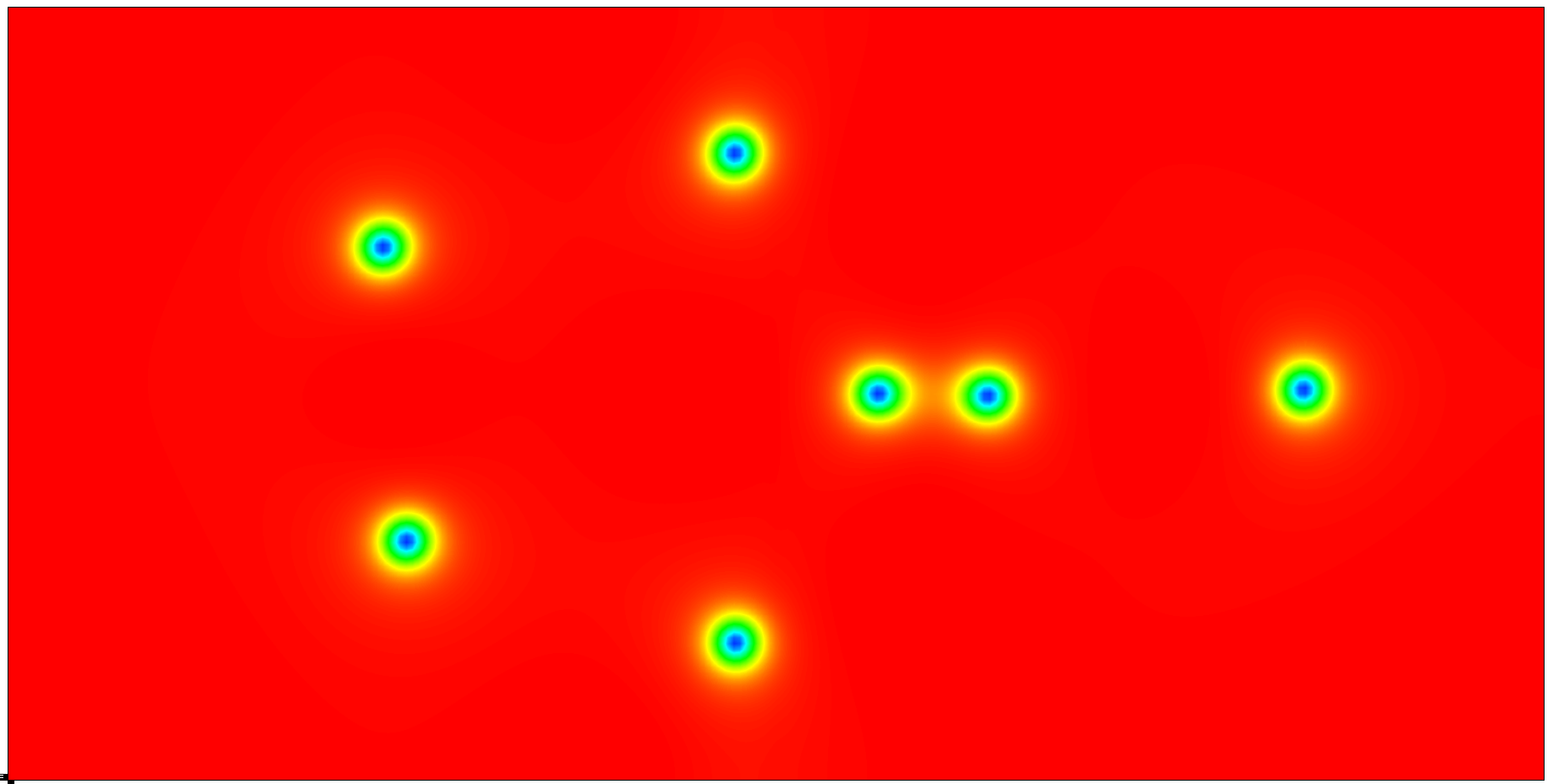}
\includegraphics[width=0.33\textwidth]{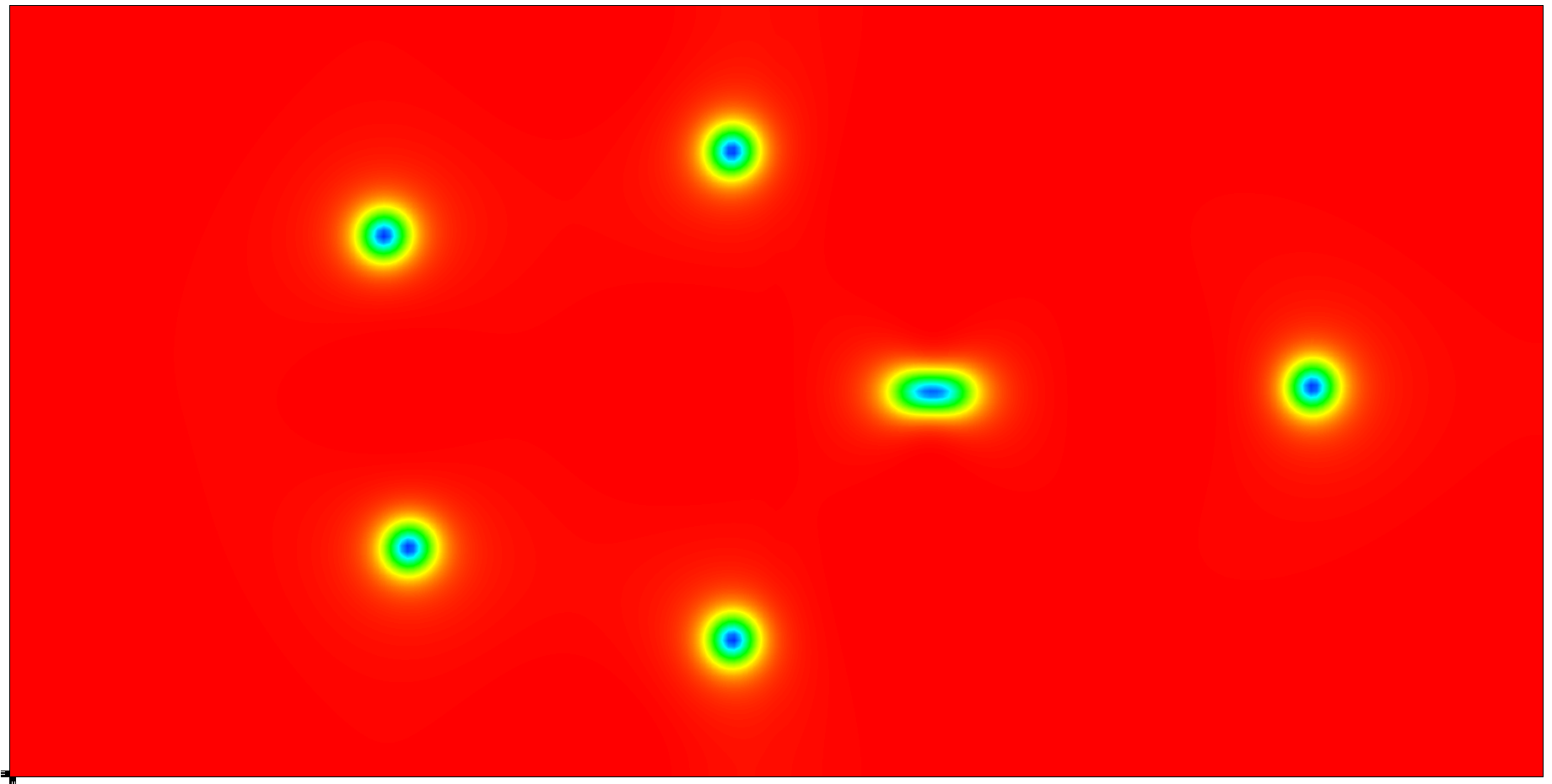}
\includegraphics[width=0.33\textwidth]{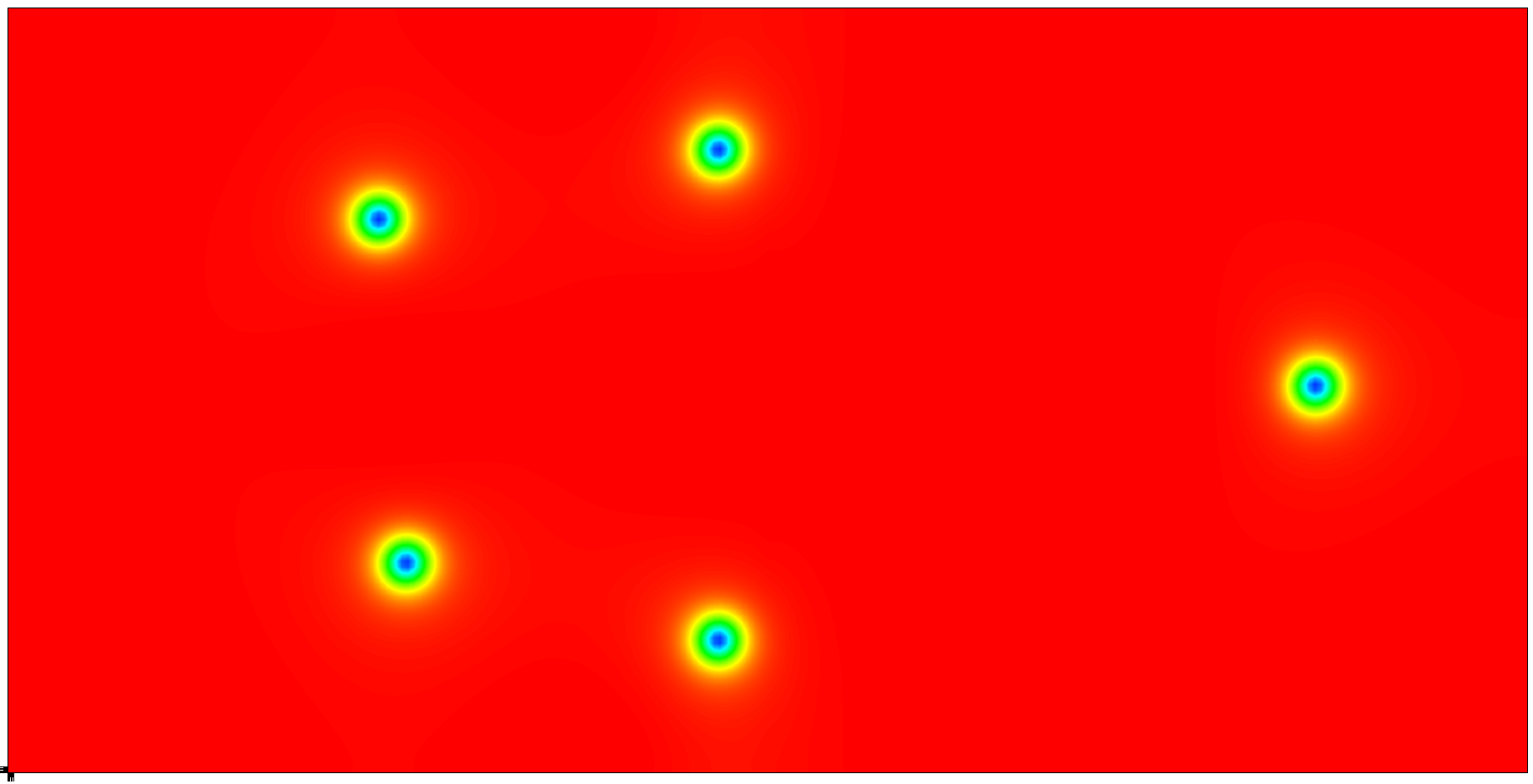}
}

\subfigure[profiles of the  $\bp$ length at $t=50,60,75$]{
\includegraphics[width=0.33\textwidth]{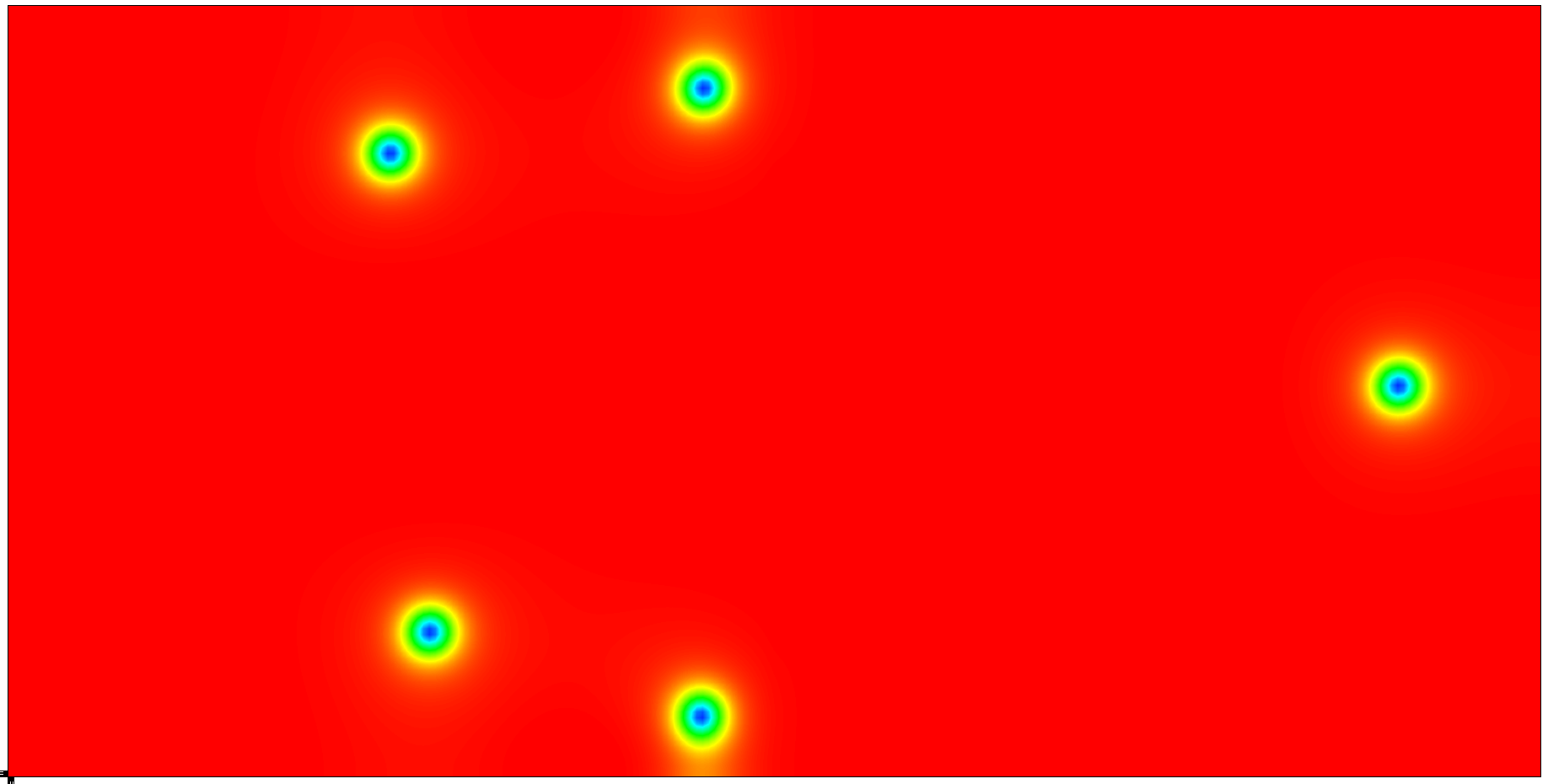}
\includegraphics[width=0.33\textwidth]{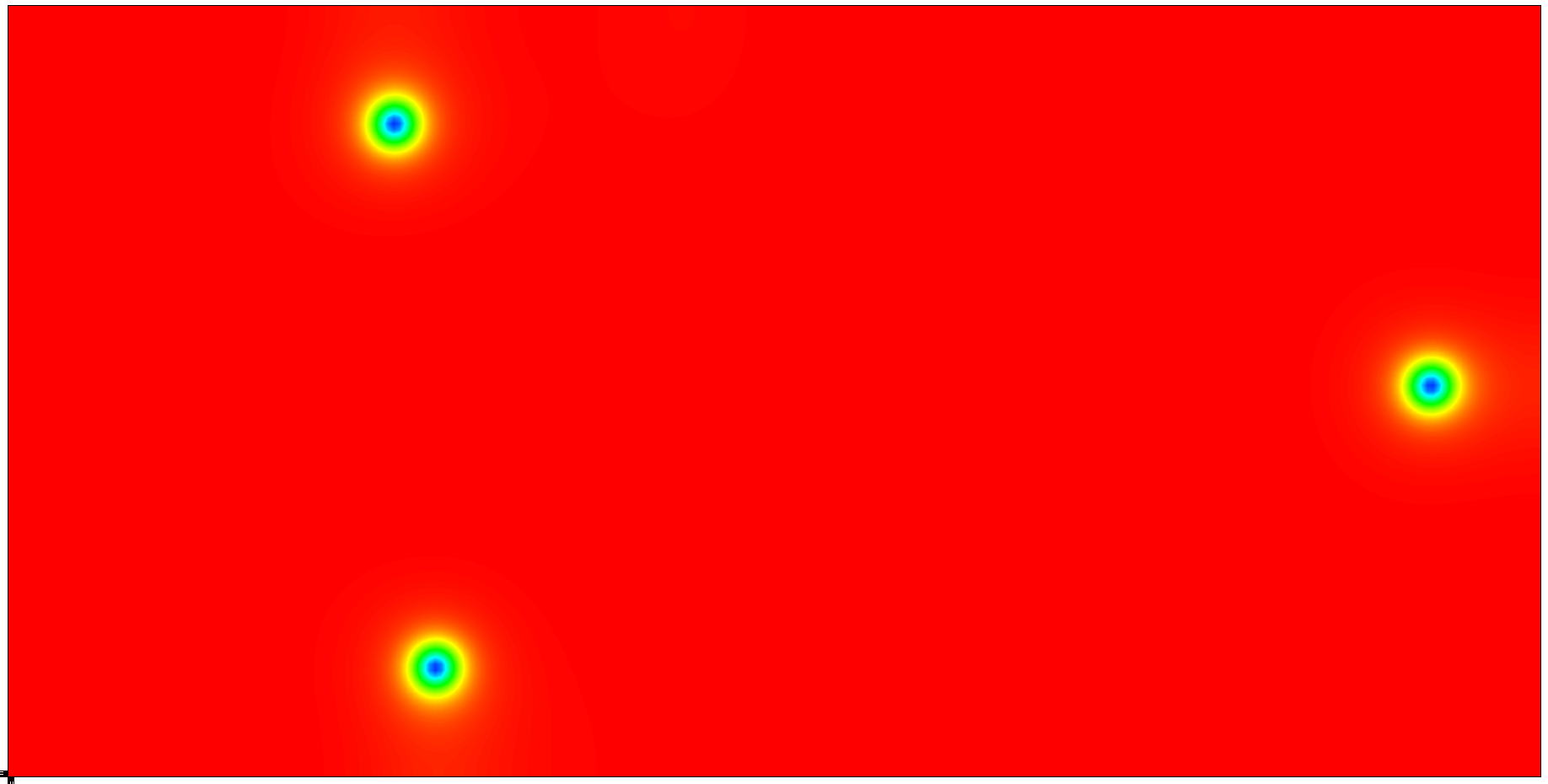}
\includegraphics[width=0.33\textwidth]{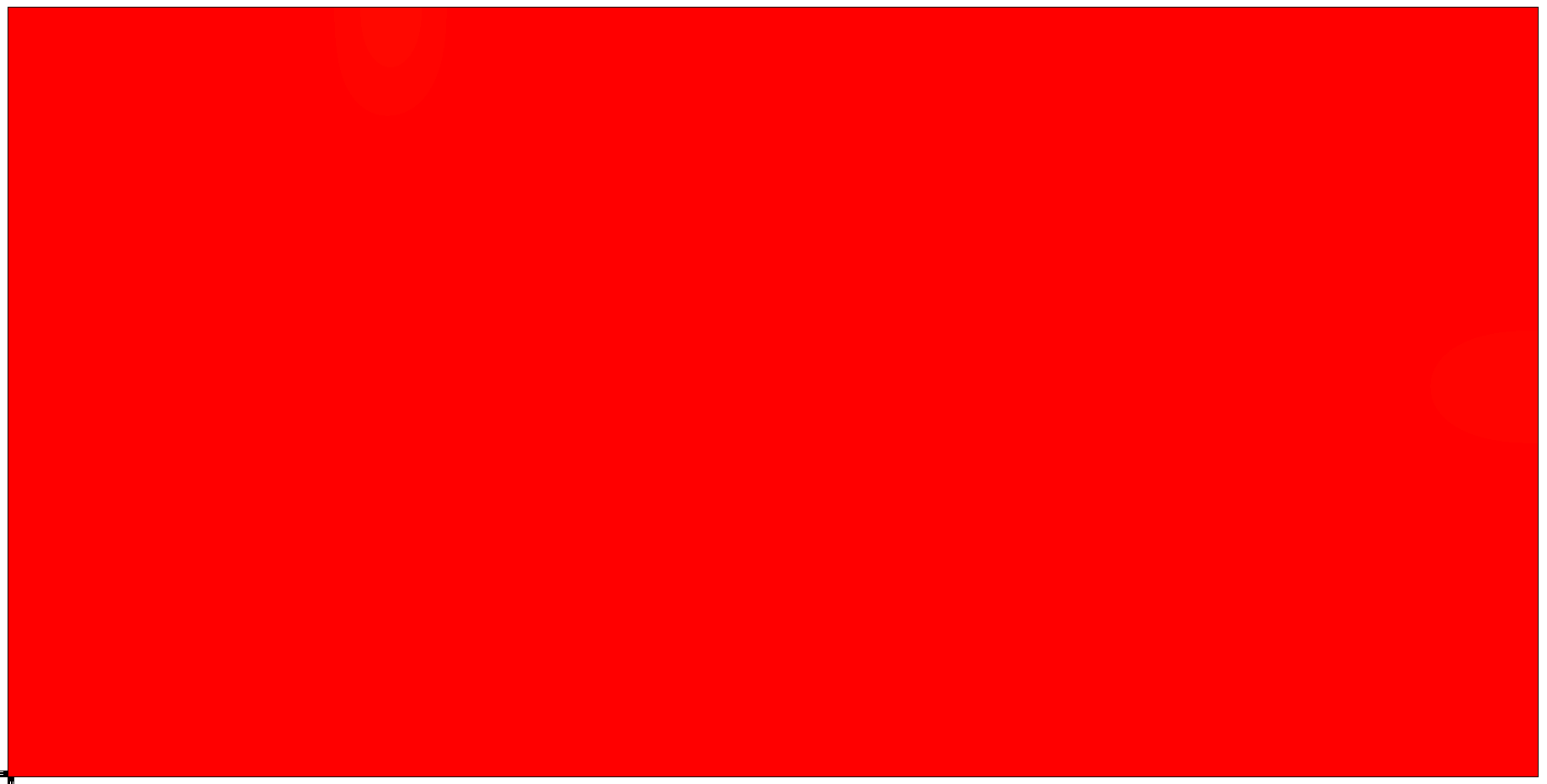}
}

\caption{Defect dynamics of the nematic liquid crystals driven by the Ericksen-Leslie hydrodynamic model. In this figure, the norm of $\bp$ is visualized for the simulation in Figure \ref{fig:LCP-bp}. Here the color red represents $1$, and the color blue represents $0$.}
\label{fig:LCP-Defect}
\end{figure}

To further verify the accuracy and energy stability of the proposed scheme, we also summarize the energy evolution dynamics in Figure \ref{fig:LCP-E-S}(a), and the time evolution of the auxiliary variable $s(t)$ in Figure \ref{fig:LCP-E-S}(b).  It has been observed that the energy is decreasing in time, and the numerical results of $s(t)$ are accurate.

\begin{figure}
\center
\subfigure[Energy evolution with time]{\includegraphics[width=0.45\textwidth]{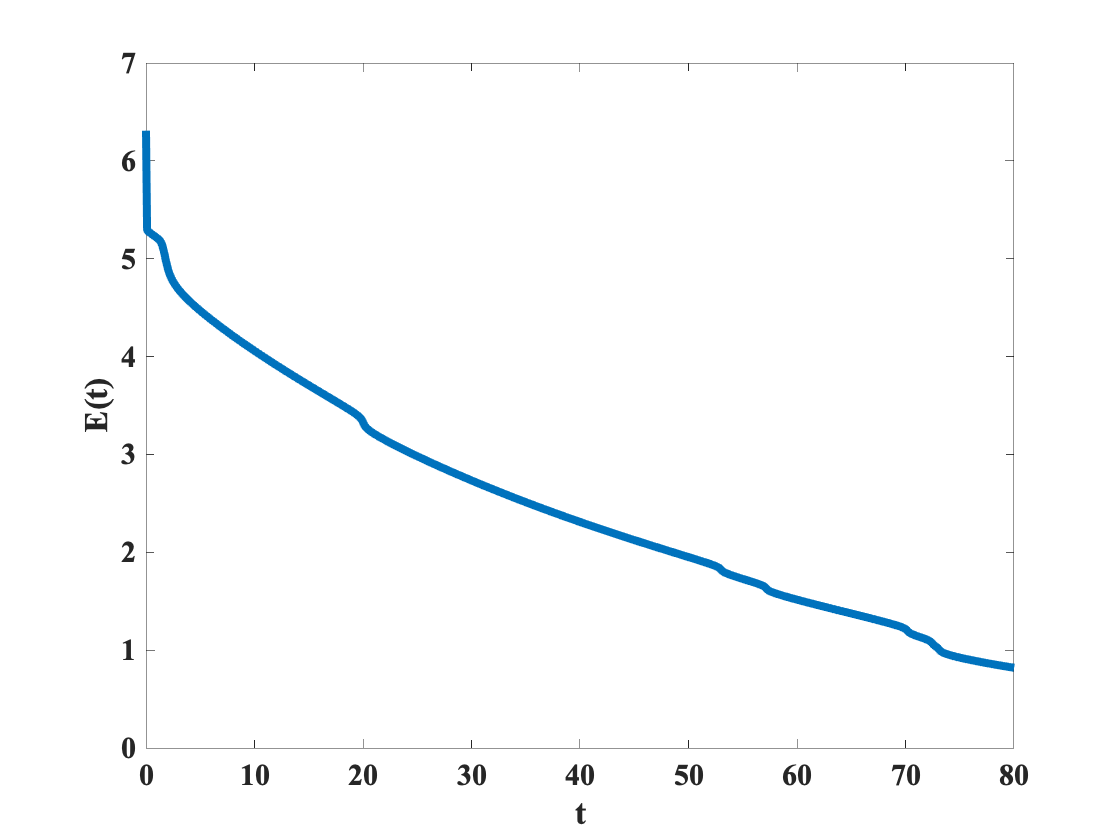}}
\subfigure[Auxiliary variable evolution with time] {\includegraphics[width=0.45\textwidth]{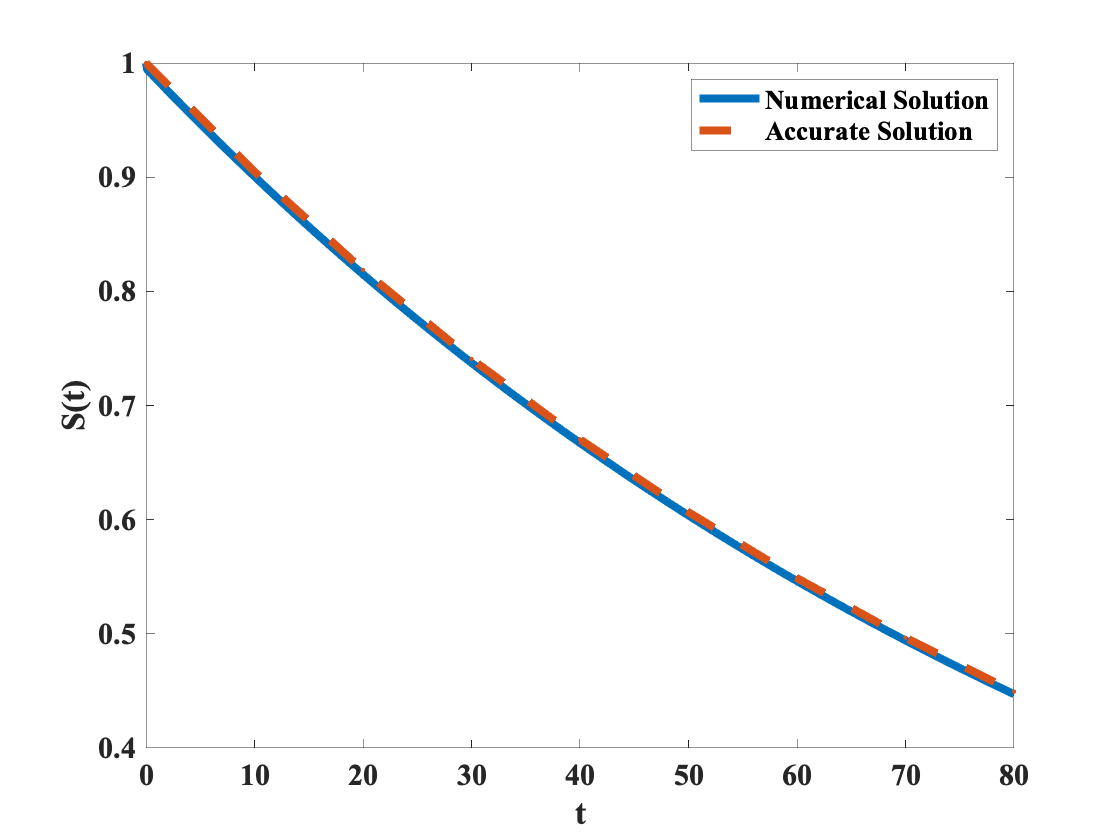}}
\caption{This figure shows the energy $\cE(t)$ and auxiliary variable $s(t)$ for the simulations in Figure \ref{fig:LCP-Defect}.  In (a), the numerical calculated energy is decreasing in time, which agrees well with the theoretical results for energy stability. In (b), the numerical solution of $s(t)$ accurately approximates its original definition $e^{-\frac{t}{T}}$.}
\label{fig:LCP-E-S}
\end{figure}

\section{Conclusion}
In this paper, we have developed a general numerical framework for designing linear, energy stable, and decoupled numerical algorithms for thermodynamically consistent models that can be cast in the generic Onsager form in \eqref{eq:evolution-general}. This framework's central idea is based on equivalent reformulation to unravel the intrinsic physical structures in the model, saying the reversible and irreversible dynamics. This guides us on algorithm design. Specifically, we utilize the energy quadratization (EQ) method to reformulate the Onsager form into the quadratized Qnsager form (which we named the Onsager-Q form). Then, we use the reversible-irreversible-dynamics (RID) method to decouple the reversible and irreversible dynamics. With the reformulated but equivalent form, we are able to introduce a sequence of semi-implicit schemes that have many advantageous properties. This numerical framework is then applied to several widely used incompressible hydrodynamic models. Specific numerical schemes for particular models are elaborated, followed by numerical examples. The second-order accuracy in time is verified through time mesh refinement as well.  Further applications of the general numerical framework on other types of thermodynamically consistent models will be discussed in our later research work.

\section*{Acknowledgments}
Jia Zhao would like to thank Prof. Qi Wang from the University of South Carolina for inspiring discussions on the generalized Onsager principles. 
Jia Zhao would like to acknowledge the support from National Science Foundation with grant NSF-DMS-1816783.
Jia Zhao would also like to acknowledge NVIDIA Corporation for the donation of a Quadro P6000 GPU for conducting some of the numerical simulations in this paper.

\bibliographystyle{unsrt}

\end{document}